\newtheorem{theorem}{\bf Theorem}[section]
\newtheorem{proposition}[theorem]{\bf Proposition}
\newtheorem{definition}[theorem]{\bf Definition}
\newtheorem{Theorem}{\bf Theorem}
\newtheorem*{claim}{\bf Claim}
\newtheorem{lemma}[theorem]{\bf Lemma}
\newtheorem{corollary}[theorem]{\bf Corollary}
\def\C{{\mathbb C}}
\def\R{{\mathbb R}}
\def\B{\mathbb{B}}
\def\D{\mathbb{D}}
\def\J{\mathcal{J}}
\def\K{\mathcal{K}}
\def\p{\mathbb{P}}
\def\supp{\textup{supp}}
\def\pe{\textup{ := }}
\def\Per{\textup{Per}}
\def\bif{\textup{bif}}
\def\Mand{\mathbf{M}}
\def\J{\mathcal{J}}
\def\ud{\underline{d}}
\def\and{{\quad\text{and}\quad}}
\title{Equidistribution towards the bifurcation current I: Multipliers and degree $d$ polynomials}
\author{Thomas Gauthier}
\address{LAMFA, Universit\'e de Picardie Jules Verne, 33 rue Saint-Leu, 80039 AMIENS Cedex 1, FRANCE}
\email{thomas.gauthier@u-picardie.fr}
\begin{document}

\begin{abstract}
In the moduli space $\mathcal{P}_d$ of degree $d$ polynomials, the set $\Per_n(w)$ of classes $[f]$ for which $f$ admits a cycle of exact period $n$ and multiplier multiplier $w$ is known to be an algebraic hypersurface. We prove that, given $w\in\C$, these hypersurfaces equidistribute towards the bifurcation current as $n$ tends to infinity.
\end{abstract}

\maketitle

\tableofcontents


\section*{Introduction}

In a holomorphic family $(f_\lambda)_{\lambda\in\Lambda}$ of degree $d\geq2$ rational maps, the \emph{bifurcation locus} is the closure in the parameter space $\Lambda$ of the set of discontinuity of the map $\lambda\mapsto \J_\lambda$, where $\J_{\lambda}$ is the Julia set of $f_{\lambda}$. The study of the global geography of the parameter space $\Lambda$ is related to the study of the hypersurfaces
\begin{center}
$\Per_n(w)\pe\{\lambda\in\Lambda$ s.t. $f_\lambda$ has a $n$-cycle of multiplier $w\}~.$
\end{center}
In their seminal work \cite{MSS}, Ma\~{n}\'e, Sad and Sullivan prove that the bifurcation locus is nowhere dense in $\Lambda$ and coincides with the closure of the set of parameters for which $f_{\lambda}$ admits a non-persistent neutral cycle (see also \cite{lyubich}). In particular, by Montel's Theorem, this implies that any bifurcation parameter can be approximated by parameters with a super-attracting periodic point, i.e. the bifurcation locus is contained in the closure of the set $\bigcup_{n\geq1}\Per_n(0)$.

~

DeMarco proved that, in any holomorphic family, the bifurcation locus can be naturally endowed with a closed positive $(1,1)-$current $T_\bif$, called the bifurcation current (see e.g.  \cite{DeMarco1}). This current may be defined as $dd^cL$ where $L$ is the continuous plurisubharmonic function which sends a parameter $\lambda$ to the Lyapunov exponent $L(\lambda)=\int_{\p^1}\log|f_\lambda'|\,\mu_\lambda$ of $f_\lambda$ with respect to its maximal entropy measure $\mu_\lambda$. The current $T_\bif$ provides an appropriate tool for studying bifurcations from a measure-theoretic viewpoint. When $\dim_\C\Lambda=\kappa\geq2$, it gives rise to a positive measure $\mu_\bif:=T_\bif^\kappa=T_\bif\wedge\cdots\wedge T_\bif$ called the \emph{bifurcation measure} which, in a certain way, detects maximal bifurcations that arise in the family $(f_\lambda)_{\lambda\in\Lambda}$.

It appears that, when we fix $w\in\C$, the current $T_\bif$ is very related to the asymptotic distribution of the hypersurfaces $\Per_n(w)$, as $n\to\infty$. Indeed, Bassanelli and Berteloot proved that
\begin{center}
$d^{-n}[\Per_n(w)]\longrightarrow_{n\to\infty}T_\bif$
\end{center}
 for a given $|w|<1$ in the weak sense of currents, using the fact that the function $L$ is a global potential of $T_\bif$ in any holomorphic family (see \cite{BB3}). We refer the reader to the survey \cite{dsurvey} or the lecture notes \cite{bsurvey} for a report on recent results involving bifurcation currents and further references.

~

Let us now focus on the case of the moduli space $\mathcal{P}_d$ of degree $d$ polynomials with $d-1$ marked critical points, i.e. the set of affine conjugacy classes of degree $d$ polynomials with $d-1$ marked critical points. Notice that it, in that family, the bifurcation measure has finite mass and is supported by the Shilov boundary of the \emph{connectedness locus}:
\[\mathcal{C}_d:=\{[P]\in\mathcal{P}_d\, ; \ \J_P \ \text{is connected}\}~,\]
which is a compact subset of $\mathcal{P}_d$. In the present case, Bassanelli and Berteloot \cite{BB2} prove that this convergence also holds when $|w|=1$. In the present paper, we prove that this actually holds for any $w\in\C$. In future works, we shall investigate equidistribution properties of higher codimension algebraic varieties in $\mathcal{P}_d$ defined by intersections of hypersurfaces $\Per_n(w)$, or defined by the persistence of critical orbit relations.

Our main result can be stated as follows.

\begin{Theorem}
Let $d\geq2$ and $w\in\C$ be any complex number. Then the sequence $d^{-n}[\Per_n(w)]$ converges in the weak sense of currents to the bifurcation current $T_\bif$ in the moduli space $\mathcal{P}_d$ of degree $d$ polynomials with $d-1$ marked critical points.\label{tmconv}
\end{Theorem}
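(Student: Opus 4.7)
The plan is to reformulate Theorem~\ref{tmconv} as an $L^1_{\mathrm{loc}}$-convergence of plurisubharmonic potentials, inherit the range $|w|\leq 1$ from Bassanelli--Berteloot, and then treat $|w|>1$ by combining an upper-bound estimate with a direct B\"ottcher-coordinate computation outside the connectedness locus.

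For each $n\geq 1$, let $p_n(\lambda,w)$ be the standard polynomial (built from the resultant of $f_\lambda^n(z)-z$ and $(f_\lambda^n)'(z)-w$, with the customary corrections on the lower-period strata) so that the divisor of $\lambda\mapsto p_n(\lambda,w)$ equals $[\Per_n(w)]$. Since $T_\bif=dd^c L$, the Poincar\'e--Lelong formula reduces Theorem~\ref{tmconv} to the $L^1_{\mathrm{loc}}(\cal{P}_d)$-convergence
\[
L_n(\lambda) := \frac{1}{d^n}\log|p_n(\lambda,w)| \;\longrightarrow\; L(\lambda).
\]
A degree bound in $\lambda$ makes $\{L_n\}$ locally uniformly bounded above, hence precompact in $L^1_{\mathrm{loc}}$ and each $L^1$-cluster value is psh. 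By Hartogs' lemma it therefore suffices to secure (i) the global upper bound $(\limsup_n L_n)^*\leq L$ and (ii) pointwise convergence $L_n(\lambda)\to L(\lambda)$ on a non-pluripolar subset of $\cal{P}_d$.

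The crucial input for both is the trace identity
\[
\frac{1}{d^n}\sum_{n\text{-cycles }C} \log|\rho_C(\lambda)| \;\longrightarrow\; L(\lambda),
\]
which follows from equidistribution of periodic points towards the measure of maximal entropy and holds everywhere in $\cal{P}_d$. Crude domination $\log|w-\rho|\leq\log(|w|+|\rho|)\leq\log^+|\rho|+\log(1+|w|)$ combined with summation yields (i) for every $w\in\C$. For (ii) I would work on $E=\cal{P}_d\setminus\cal{C}_d$, where Manning's formula $L(\lambda)=\log d+\sum_{i=0}^{d-2}G_\lambda(c_i(\lambda))$ is available and the B\"ottcher coordinate of $f_\lambda$ near $\infty$ gives a symbolic coding of the $n$-periodic points together with effective asymptotic formulas for their multipliers in terms of the escape rates $G_\lambda(c_i(\lambda))$. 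Comparing with the model $f(z)=z^d$, where every repelling $n$-cycle has multiplier $d^n$, and estimating the deviation via the Green function should give $L_n(\lambda)\to L(\lambda)$ for every $\lambda\in E\setminus\bigcup_n\Per_n(w)$ and every $w\in\C$.

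The hardest step, I expect, is matching the lower bound in (ii). Since repelling multipliers typically grow like $e^{nL(\lambda)}$, for any fixed $w$ most factors $|w-\rho_C(\lambda)|$ are well-approximated by $|\rho_C(\lambda)|$, but a few may come close to $w$ and threaten to send $L_n(\lambda)$ towards $-\infty$. The B\"ottcher analysis must ensure that, on $E$, the empirical distribution of $\{\rho_C(\lambda)\}_C$ has a controlled logarithmic potential at $w$---i.e., that the "resonance" contribution of multipliers clustering near $w$ is negligible relative to $d^n$. Once (i) and (ii) are in hand, Hartogs' lemma promotes the pointwise convergence to $L^1_{\mathrm{loc}}$-convergence on $\cal{P}_d$, completing the proof.
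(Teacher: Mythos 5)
Your plan hinges on the claim that once one knows $\varphi\leq L$ globally ($\varphi$ a psh $L^1_{\mathrm{loc}}$-cluster value of $d^{-n}\log|p_n(\cdot,w)|$) and $\varphi=L$ pointwise on a non-pluripolar set, ``Hartogs' lemma'' forces $\varphi\equiv L$. This is false. A psh function that is $\leq L$ and agrees with $L$ on a non-pluripolar --- even open --- set need not equal $L$: take $L(z)=\max(\log|z|,0)+1$ and $\varphi(z)=\max(\log|z|,-1)+1$ on $\C$; both are subharmonic, $\varphi\leq L$, they coincide on the non-polar open set $\{|z|>1\}$, yet $\varphi<L$ on $\{|z|<1\}$. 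This is exactly the situation you would be in: $\varphi$ would agree with $L$ outside the compact connectedness locus $\mathcal{C}_d$, and nothing in your argument prevents a strict inequality inside. Closing this gap is precisely the content of the paper: one needs a comparison principle (Theorem~\ref{tmcompXav}) for psh functions whose Monge--Amp\`ere measure concentrates on $\partial K$ and does not charge boundaries of components of $\mathring K$, plus the non-trivial input that $\mu_{\bif}$ does not charge $\partial\mathcal{U}$ for components $\mathcal{U}$ of $\mathring{\mathcal{C}}_d$ (Theorem~\ref{lmboundary}).

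There is a second, independent gap. You propose to establish pointwise convergence $L_n(\lambda)\to L(\lambda)$ on $E=\mathcal{P}_d\setminus\mathcal{C}_d$ via B\"ottcher coding of periodic orbits, but the escape locus $E$ is not a union of hyperbolic components when $d\geq 3$: a parameter in $E$ may have some bounded critical orbits with active critical points, and the bifurcation locus meets $E$. The B\"ottcher coordinate only controls the part of the dynamics that escapes; it gives no effective asymptotics for multipliers of cycles interacting with bounded critical points, and you acknowledge this is the hard part (``resonance'' estimates) without supplying it. On the shift locus (all critical points escaping, which is hyperbolic) such a computation is plausible, but there it would only reproduce what is already known from Proposition~\ref{prop:BB}. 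The paper circumvents this by never proving pointwise convergence of the potentials: it bootstraps from hyperbolic components up through the stratification by the number of bounded critical points, using the comparison principle on $k$-dimensional local slices $\mathcal{X}$ where $(dd^cL\circ\pi)^k$ has the requisite structure (Theorem~\ref{cor:TI}). In short, the two missing ideas in your proposal --- a mechanism to cross $\mathcal{C}_d$ and a mechanism to cross the intermediate strata of $E$ --- are exactly what Theorems~\ref{tmcompXav}, \ref{tm:locprop} and \ref{lmboundary} are for.
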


Notice that, when $d=2$, the moduli space of quadratic polynomials with one marked critical point is isomorphic to the quadratic family $(z^2+c)_{c\in\C}$ and that, in the quadratic family, this result is a particular case of the main Theorem of \cite{multipliers}. Notice also that for $d\geq3$, up to a finite branched covering, $\mathcal{P}_d$ is isomorphic to $\C^{d-1}$.

~

Let us now sketch the strategy of the proof of Theorem \ref{tmconv} developed in \cite{multipliers} in the quadratic case and then explain how to adapt it to our situation. It is known that there exists a global potential $\varphi_n$ of the current $d^{-n}[\Per_n(w)]$ that converges, up to taking a subsequence, in $L^1_\textup{loc}$ to a psh function $\varphi\leq L$ which satisfies $\varphi=L$ on hyperbolic components (see \cite{BB3}).

In the quadratic case, the bifurcation locus is the boundary of the Mandelbrot set $\Mand\Subset\C$ and $\C\setminus \Mand$ is a hyperbolic component, hence $\varphi=L$ outside $\Mand$. First, we explain why the positive measure $\Delta L$ of the Mandelbrot set doesn't give mass to the boundary of connected components of the interior of $\Mand$. Secondly, we establish a comparison lemma for subharmonic function which, in that case, gives $\varphi=L$ and the proof is complete.

To adapt the proof to the situation $d\geq3$, we first establish a generalization of the comparison principle for plurisubharmonic functions. Again, it is known that $\varphi=L$ on the escape locus and we shall use the comparison principle recursively on the number of critical points of bounded orbits in suitable local subvarieties of $\mathcal{P}_d$.

~

Let us mention that the comparison principle we prove may be of independant interest. In contrast to the classical domination Theorem of Bedford and Taylor (see e.g. \cite{BedfordTaylor}), we don't need to be able to compare the Monge-Amp\`ere masses of two psh functions to compare the functions themselves. Precisely, we prove the following which is a generalization in higher dimension of \cite[Lemma 3]{multipliers}.

\begin{Theorem}[Comparison principle]\label{tmcompXav}
Let $\mathcal{X}$ be a complex manifold of dimension $k\geq1$. Assume that there exists a smooth psh function $w$ on $\mathcal{X}$ and a strict analytic subset $\mathcal{Z}$ of $\mathcal{X}$ such that $(dd^cw)^k$ is a non-degenerate volume form on $\mathcal{X}\setminus\mathcal{Z}$. Let $\Omega\subset\mathcal{X}$ be a domain of $\mathcal{X}$ with $\mathcal{C}^1$ boundary and let $u,v\in\mathcal{PSH}(\Omega)$ and $K\Subset\Omega$ be a compact set. Assume that the following assumptions are satisfied:
\begin{itemize}
\item $v$ is continuous, $\supp((dd^cv)^k)\subset \partial K$ and $(dd^cv)^k$ has finite mass,
\item for any connected component $U$ of $\mathring{K}$, $(dd^cv)^k(\partial U)=0$,
\item $u\leq v$ on $\Omega$ and $u=v$ on $\Omega\setminus K$.
\end{itemize}
Then $u=v$ on $\Omega$.
\end{Theorem}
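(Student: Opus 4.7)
The plan is to reduce first to the case where $u$ is locally bounded, then to establish $(dd^c u)^k = (dd^c v)^k$ as positive measures on $\Omega$, and finally to conclude $u = v$ by uniqueness for the homogeneous Dirichlet problem on each component of $\mathring K$.

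A direct continuity argument upgrades the equality $u = v$ from $\Omega \setminus K$ to all of $\Omega \setminus \mathring K$: for $z_0 \in \partial K$ and a sequence $z_n \in \Omega \setminus K$ with $z_n \to z_0$ one has $v(z_0) = \lim v(z_n) = \lim u(z_n) \leq u(z_0) \leq v(z_0)$ by the continuity of $v$ and the upper semicontinuity of $u$. The reduction to bounded $u$ proceeds via $u_\delta := \max(u, v - \delta)$ for $\delta > 0$: this is psh, satisfies $v - \delta \leq u_\delta \leq v$, inherits every hypothesis of the theorem (with the same $v$, $K$, and $w$), and is locally bounded; once $u_\delta = v$ is known, the strict inequality $v - \delta < v$ forces $u = v$ pointwise.

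Assume now $u$ is locally bounded and work on a relatively compact $\Omega' \Subset \Omega$ containing $K$. The core of the argument is to establish $(dd^c u)^k = (dd^c v)^k$ on $\Omega'$ as positive measures. Using the telescoping identity
\[
(dd^c v)^k - (dd^c u)^k = dd^c(v-u) \wedge \sum_{j=0}^{k-1}(dd^c v)^j \wedge (dd^c u)^{k-1-j}
\]
together with Bedford--Taylor integration by parts against a smooth cut-off $\chi$ equal to $1$ on a neighbourhood of $K$ yields $\int \chi\bigl[(dd^c v)^k - (dd^c u)^k\bigr] = \int (v-u)\, dd^c\chi \wedge T$, which vanishes because $dd^c\chi$ is supported in $\Omega' \setminus K$ where $v = u$; this produces the total mass equality $\int_{\Omega'}(dd^c u)^k = \int_{\Omega'}(dd^c v)^k$. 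To localise to each connected component $V$ of $\mathring K$, I would combine the hypothesis $(dd^c v)^k(\partial V) = 0$ with the nondegenerate reference volume $(dd^c w)^k$ on $\mathcal{X} \setminus \mathcal{Z}$: approximating by $u + \varepsilon w$ and $v + \varepsilon w$ (which are strictly psh off $\mathcal{Z}$) and passing to the limit $\varepsilon \to 0$ in a suitable Bedford--Taylor inequality should yield $(dd^c u)^k(\bar V) = (dd^c v)^k(\bar V) = 0$.

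Once $u$ is maximal on every component $V$ of $\mathring K$ and coincides with $v$ on $\partial V$, the uniqueness of solutions to the homogeneous Monge--Ampère Dirichlet problem gives $u = v$ on each $V$, which combined with the first step completes the proof. The hard part is the localisation: in complex dimension one it reduces via the Riesz decomposition of subharmonic functions and the classical Green function, but in dimension $k \geq 2$ the possible failure of continuity of $u$ up to $\partial V$ obstructs the naive Stokes identity, and this is precisely where the nondegeneracy of $(dd^c w)^k$ off $\mathcal{Z}$ is needed to legitimise a weighted integration by parts.
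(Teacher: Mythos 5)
Your opening move -- showing $u=v$ on $\partial K$ from the continuity of $v$ and the upper semicontinuity of $u$ -- is exactly the paper's first step, and the global mass balance you derive by telescoping and integration by parts is in substance the paper's Lemma~\ref{lmmasse}. The reduction to locally bounded $u$ via $u_\delta := \max(u,v-\delta)$ is a valid extra normalization, although the paper dispenses with it.

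The genuine gap is the localisation step, and you seem to sense it yourself. The total mass equality $\int_{\Omega'}(dd^cu)^k=\int_{\Omega'}(dd^cv)^k$, together with $u=v$ on $\Omega'\setminus K$, only tells you that the two Monge--Amp\`ere measures put equal mass on $K$ \emph{as a whole}; there is nothing to stop $(dd^cu)^k$ from moving mass between different connected components of $\mathring K$ and $\partial K$. Your suggestion of approximating by $u+\epsilon w$ and $v+\epsilon w$ and passing to the limit is not an argument: the Monge--Amp\`ere operator is nonlinear, so no ``suitable Bedford--Taylor inequality'' hands you $(dd^cu)^k(\overline V)=0$ component by component. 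The paper solves exactly this by a gluing construction you did not consider: for a fixed component $U$ of $\mathring K$ it sets $\rho:=u$ on $U$ and $\rho:=v$ on $\Omega\setminus U$, checks $\rho$ is psh, and since $\rho$ coincides with $v$ outside $\overline U$ one has $(dd^c\rho)^k=(dd^cv)^k$ on $\Omega\setminus\overline U$; the Lemma~\ref{lmmasse} mass balance then forces $(dd^c\rho)^k(\overline U)=(dd^cv)^k(\overline U)=0$. This isolates one component at a time and is the key point your scheme is missing.

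The concluding appeal to ``uniqueness for the homogeneous Monge--Amp\`ere Dirichlet problem'' is also not available as stated: $u$ is only known to be upper semicontinuous, so $u|_{\overline V}$ is not a bona fide Dirichlet solution and you cannot compare it to $v$ by the classical Perron/Bedford--Taylor uniqueness theorem. This is precisely where the paper's auxiliary machinery enters. Theorem~\ref{tmcomp} (a comparison inequality for the Monge--Amp\`ere masses on the open set $\{u<v\}$, valid with $v$ continuous and only a $\liminf$ boundary condition on $u-v$) and its Corollary~\ref{cortmcomp} (the domination principle) are applied to $\rho$ and $v$; the nondegenerate volume form $(dd^cw)^k$ is used inside Corollary~\ref{cortmcomp}, via the perturbation $v_\epsilon=v+\epsilon w$, to produce the contradiction $0<\epsilon^k\int_{\{u<v_\epsilon\}}(dd^cw)^k\leq 0$ if $\{u<v\}$ were nonempty. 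You identified the right tool ($w$, and a perturbation by $\epsilon w$) but aimed it at the wrong target (mass equality rather than the final domination step). To fix the proof along these lines you should (i) introduce the paper's gluing function $\rho$ to obtain the componentwise mass vanishing, and (ii) replace the Dirichlet-uniqueness argument by a domination principle of Bedford--Taylor type adapted to upper semicontinuous competitors, as in Corollary~\ref{cortmcomp}.
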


Our strategy to apply Theorem \ref{tmcompXav} relies on describing (partially) the currents $T_\bif^k$ in restriction to suitable local analytic subvarieties of the moduli space $\mathcal{P}_d$. When $1\leq k\leq d-2$, for any parameter $\lambda_0$ lying in an open dense subset of $\mathcal{P}_d\setminus\mathcal{C}_d$, we build a local analytic subvariety passing through $\lambda_0$ and in restriction to which the bifurcation measure enjoys good properties. The proof relies on techniques developped in the context of horizontal-like maps (see \cite{DDS,dujardin2}). This is the subject of the following result.

\begin{Theorem}
Pick $d\geq3$. There exists an open dense subset $\Omega\subset\mathcal{P}_d\setminus\mathcal{C}_d$ such that for any $\{P\}\in\Omega$, if $0\leq k\leq d-2$ is the number of critical points of $P$ with bounded orbit, then there exists an analytic set $\mathcal{X}_0\subset \Omega$, a complex manifold $\mathcal{X}$ of dimension $k$ and a finite holomorphic map $\pi:\mathcal{X}\to\mathcal{X}_0$ such that $\{P\}\in \mathcal{X}_0$ and
\begin{enumerate}
\item the measure $\mu_\mathcal{X}:=\pi^*(T_\bif^k|_{\mathcal{X}_0})$ is a compactly supported finite measure on $\mathcal{X}$,
\item for any relatively compact connected component $\mathcal{U}$ of the open set $\mathcal{X}\setminus\supp(\mu_\mathcal{X})$,
\begin{center}
$\mu_\mathcal{X}(\partial\mathcal{U})=0$,
\end{center}
\item if $\{Q\}$ lies in the non-relatively compact connected component of $\mathcal{X}\setminus\supp(\mu_\mathcal{X})$, then the degree $d$ polynomial $Q$ has at most $k-1$ critical points with bounded orbit.
\end{enumerate}
\label{tm:locprop}
\end{Theorem}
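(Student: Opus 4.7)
The plan is to build $\mathcal{X}_0$ as a local complex slice of codimension $d-1-k$ in $\mathcal{P}_d$ obtained by freezing the escape parameters of the $d-1-k$ escaping critical points of $P$, and then to analyze the restricted bifurcation current through the horizontal-like lens of \cite{DDS,dujardin2}. Label the marked critical points so that $c_1(P),\ldots,c_k(P)$ have bounded orbit and $c_{k+1}(P),\ldots,c_{d-1}(P)$ escape. For each $j\geq k+1$ and $n_j$ large enough, the B\"ottcher coordinate $\varphi_\lambda$ of $f_\lambda$ near infinity is defined at $f_\lambda^{n_j}(c_j(\lambda))$ for $\lambda$ close to $\{P\}$, so $B_j(\lambda):=\varphi_\lambda\bigl(f_\lambda^{n_j}(c_j(\lambda))\bigr)$ is holomorphic there. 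I define $\Omega$ as the open dense subset of $\mathcal{P}_d\setminus\mathcal{C}_d$ on which the collection $\{dB_j\}_{j\geq k+1}$ has maximal rank, and $\mathcal{X}_0$ as the connected component through $\{P\}$ of the level set $\{B_j=B_j(P):\, j\geq k+1\}$, a pure $k$-dimensional analytic set contained in $\Omega$. A finite normalization $\pi:\mathcal{X}\to\mathcal{X}_0$ provides the required smooth model.

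Setting $G_i(\lambda):=G_\lambda(c_i(\lambda))$, DeMarco's decomposition reads $T_\bif=\sum_{i=1}^{d-1}dd^cG_i$ on $\mathcal{P}_d$. By construction, $d^{n_j}G_j=\log|B_j|$ on a neighborhood of $\mathcal{X}_0$ for $j\geq k+1$, so $G_j|_{\mathcal{X}_0}$ is pluriharmonic and
\[T_\bif^k\big|_{\mathcal{X}_0}=\Bigl(\sum_{i=1}^{k}dd^c(G_i|_{\mathcal{X}_0})\Bigr)^k.\]
Since each $G_i|_{\mathcal{X}_0}$ is continuous and plurisubharmonic on $\mathcal{X}$ (after pulling back by $\pi$), the right-hand side defines a well-defined Bedford-Taylor Monge-Amp\`ere measure $\mu_\mathcal{X}$.

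Properties (1) and (3) are two facets of the same compactness statement. The support of $dd^cG_i$ lies in the topological boundary of $\{G_i>0\}$, so $\supp(\mu_\mathcal{X})$ is contained in the locus where each of $c_1,\ldots,c_k$ is simultaneously on the boundary of its own escape locus. The key geometric input, adapted from the horizontal-like viewpoint of \cite{DDS,dujardin2}, is that at any ``point at infinity'' of $\mathcal{X}_0$ inside $\mathcal{P}_d$ an additional critical point is forced to escape: the $d-1-k$ originally escaping ones have frozen rate on $\mathcal{X}_0$, so escape to infinity in the $(d-1)$-dimensional affine moduli space must occur through one of $c_1,\ldots,c_k$. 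At such a point at least one $G_i$, $1\leq i\leq k$, is locally pluriharmonic on $\mathcal{X}$, the wedge product vanishes locally, and this simultaneously yields compactness of $\supp(\mu_\mathcal{X})$ and property (3). Property (2) then reduces to showing that $\mu_\mathcal{X}$ puts no mass on $\partial\mathcal{U}$ for relatively compact components $\mathcal{U}$ of its complement; the argument mirrors the quadratic case recalled in the introduction and uses that $G_1|_\mathcal{X},\ldots,G_k|_\mathcal{X}$ are locally constant on $\mathcal{U}$, together with a transversality property of the foliation they induce.

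The main obstacle, I expect, is making rigorous the heuristic ``additional critical points escape at infinity in $\mathcal{X}_0$''. Concretely, this requires producing, outside a compact core of $\mathcal{X}$, a proper exhaustion by $\max_{1\leq i\leq k}G_i$, which in turn rests on controlling the growth of the $G_i$'s along the slices $\mathcal{X}_0$ in the linear model $\mathcal{P}_d\simeq\C^{d-1}/\text{finite}$. This is precisely where the horizontal-like map formalism of \cite{DDS,dujardin2} intervenes: viewing the $(d-1-k)$-dimensional escape-rate map as base, the slices $\mathcal{X}_0$ become fibers carrying a horizontal-like dynamical system in which the required properness of critical escape is built in.
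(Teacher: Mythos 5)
Your construction of $\mathcal{X}_0$ by freezing the B\"ottcher coordinates of the escaping critical points, followed by a finite normalization/desingularization, matches the paper's approach (the paper's Lemma~\ref{lm:X}), and your identification of $T_\bif^k|_{\mathcal{X}_0}$ with a Monge--Amp\`ere measure of $\sum_{i\le k}G_i$ is correct. Your handling of compactness and of property~(3) is also essentially aligned with the paper, modulo the fact that the paper makes properness explicit by intersecting with $\{d\cdot G_I<G(c_0,a_0)\}$ rather than via a heuristic about points at infinity.

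The real gap is property~(2). You assert that the argument ``mirrors the quadratic case'' and rests on $G_1|_\mathcal{X},\dots,G_k|_\mathcal{X}$ being locally constant on $\mathcal{U}$ together with ``a transversality property of the foliation.'' Neither ingredient proves $\mu_\mathcal{X}(\partial\mathcal{U})=0$. The quadratic-case boundary argument in the introduction is Theorem~\ref{lmboundary}, which uses McMullen's theorem on limits of invariant line fields and is tailored to the connectedness locus, not to slices inside the escape locus; it does not transfer here. And the $G_i$'s vanishing on $\mathring{K}$ is automatic and says nothing about the measure of $\partial\mathcal{U}$. The mechanism the paper actually uses is a \emph{symbolic/laminar decomposition} of the restricted bifurcation current: because $d-1-k$ critical points escape, the filled-in Julia set of each $P_\lambda$ over $\mathcal{X}$ breaks into pieces $\K_\epsilon$ coded by the full shift $\Sigma_\ell$, the maximal entropy measure decomposes as $\int\mu_\epsilon\,d\nu_{\ud}(\epsilon)$ against a non-atomic Bernoulli-type measure $\nu_{\ud}$, and (via the horizontal-current lemma and the admissible wedge product lemma of Dujardin) the bifurcation measure on $\mathcal{X}$ is $(dd^cG_I\circ\pi)^k=k!\int_{\Sigma_\ell^k}T_{\epsilon_1,1}\wedge\cdots\wedge T_{\epsilon_k,k}\,d\nu_{\ud}^{\otimes k}$. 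For a stable component $\mathcal{U}$ each critical point $c_{i_j}$ lies in a single piece $\K_{\epsilon_{0,j},\lambda}$ over all of $\overline{\mathcal{U}}$, so $\mu_\mathcal{X}(\overline{\mathcal{U}})\le\text{const}\cdot\prod_j\nu_{\ud}(\{\epsilon_{0,j}\})=0$ because $\nu_{\ud}$ charges no point. You invoke \cite{DDS,dujardin2} only to ``control growth of the $G_i$'s,'' which misattributes their role: they are needed precisely to produce this decomposition, which is where property~(2) comes from. Without identifying this mechanism, your sketch of property~(2) is not a proof. A secondary unaddressed point is the density of $\Omega$ in $\mathcal{P}_d\setminus\mathcal{C}_d$, which the paper proves using Lemma~\ref{lm:connected} and Theorem~\ref{tmBH}, and which you simply assert.
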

The proof of Theorem \ref{tm:locprop} is the combination of Theorem \ref{cor:TI} and Claim of Section \ref{sec:distrib}.

~

The last step of the proof of Theorem \ref{tmconv} consists in applying the comparison Theorem in $\mathcal{P}_d$ for $K=\mathcal{C}_d$. To this aim, we need to prove that the bifurcation measure $\mu_\bif$ does not give mass to the boundary of components of the interior of $\mathcal{C}_d$. Building on the description of the bifurcation measure given by Dujardin and Favre \cite{favredujardin} and properties of invariant line fields established by McMullen \cite{McMullen}, we prove the following.

\begin{Theorem}\label{lmboundary}
Let $\mathcal{U}\subset\mathcal{P}_d$ be any connected component the interior of $\mathcal{C}_d$. Then
$$\mu_\bif(\partial \mathcal{U})=0.$$
\end{Theorem}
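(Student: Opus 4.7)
My plan is to distinguish two cases depending on whether $\mathcal{U}$ is a hyperbolic component or a queer (non-hyperbolic, $J$-stable) component of $\mathring{\mathcal{C}_d}$. By the Ma\~n\'e-Sad-Sullivan stability theorem, every connected component of $\mathring{\mathcal{C}_d}$ lies in the $J$-stability locus of $\mathcal{P}_d$, and by Lyubich's trichotomy these are the only two possibilities. Throughout, I would exploit Przytycki's decomposition of the Lyapunov exponent
\[
L(\lambda) \;=\; \log d \,+\, \sum_{i=1}^{d-1} G_\lambda\bigl(c_i(\lambda)\bigr),
\]
which yields $T_\bif = \sum_{i=1}^{d-1} T_{c_i}$, where $T_{c_i} := dd^c\bigl[\lambda\mapsto G_\lambda(c_i(\lambda))\bigr]$ is a positive $(1,1)$-current supported on the activity locus of the $i$-th critical point $c_i$. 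The measure $\mu_\bif$ then expands multilinearly into wedge products of the $T_{c_i}$'s, and by the Dujardin-Favre description its support coincides with the closure of the set of strictly post-critically finite polynomials.

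In the hyperbolic case, at any $\lambda_0\in\partial\mathcal{U}$ the attracting cycles persisting over $\mathcal{U}$ deform to non-repelling cycles by upper-semicontinuity of the number of non-repelling cycles. I would show that for any such $\lambda_0$ at least one critical point $c_i$ remains trapped in the same Fatou component along a one-sided neighborhood of $\lambda_0$ by holomorphic motion of the Fatou set over $\mathcal{U}$, so that $G_\lambda(c_i(\lambda))$ vanishes in a neighborhood of $\lambda_0$ within $\overline{\mathcal{U}}$; hence $T_{c_i}$ vanishes there. Multilinearity of the wedge product then forces $\mu_\bif=0$ in a neighborhood of $\lambda_0$ on $\overline{\mathcal{U}}$, and covering $\partial\mathcal{U}$ by such neighborhoods yields $\mu_\bif(\partial\mathcal{U})=0$.

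In the queer case, McMullen's theorem furnishes an invariant line field on the Julia set of each $P_\lambda$ for $\lambda\in\mathcal{U}$, which propagates to $\overline{\mathcal{U}}$ via the associated Ma\~n\'e-Sad-Sullivan holomorphic motion and provides a non-trivial quasiconformal deformation at every $\lambda_0\in\partial\mathcal{U}$. Strictly post-critically finite polynomials, being Thurston-rigid, admit no such deformation, so no sequence of strictly post-critically finite parameters can accumulate on $\partial\mathcal{U}$. Combined with the Dujardin-Favre description, this yields $\supp(\mu_\bif)\cap\partial\mathcal{U}=\varnothing$, whence $\mu_\bif(\partial\mathcal{U})=0$.

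The main obstacle I anticipate lies in the hyperbolic case: the boundary $\partial\mathcal{U}$ is naturally stratified by the set of cycles whose multiplier becomes unimodular, and one must check on every stratum that at least one critical point remains persistently trapped in the same Fatou component, so that the local vanishing of at least one factor $T_{c_i}$ survives the passage to the wedge product defining $\mu_\bif$. The queer case, by contrast, should follow more directly from the interplay of McMullen's rigidity with the Dujardin-Favre approximation by strictly post-critically finite parameters.
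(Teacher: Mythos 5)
Your case split (hyperbolic versus queer) matches the paper's, but both branches of your argument have gaps, and you're missing the mechanism the paper actually uses, which is not about the support of $\mu_\bif$ but about a full-measure Borel set. The paper invokes Dujardin--Favre (Theorem~\ref{tm:mupp}): there is a Borel set $\mathcal{B}\subset\partial_S\mathcal{C}_d$ of \emph{full} $\mu_\bif$-measure on which every cycle is repelling, critical orbits are dense in the Julia set, and $\dim_H(\J_{c,a})<2$. The whole proof then reduces to showing $\partial\mathcal{U}\cap\mathcal{B}=\emptyset$. You instead try to show the stronger statement $\supp\mu_\bif\cap\partial\mathcal{U}=\emptyset$ in the queer case and an even more direct local vanishing in the hyperbolic case; neither attempt closes.

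In the hyperbolic case, your claim that $T_{c_i}$ ``vanishes there'' does not follow from $G_\lambda(c_i(\lambda))=0$ on a neighborhood of $\lambda_0$ \emph{within} $\overline{\mathcal{U}}$. Since $\overline{\mathcal{U}}\subset\mathcal{C}_d$, that one-sided vanishing is automatic for \emph{every} critical Green function and carries no information; moreover $\partial\mathcal{U}\subset\partial\mathcal{C}_d$, so every neighborhood of $\lambda_0$ meets parameters where some critical point escapes, and $T_{c_i}=dd^c g_\lambda(c_i)$ is a distribution whose vanishing requires two-sided control. The paper's argument is much simpler: an attracting cycle can be followed holomorphically through $\mathcal{U}$ and extends continuously to $\overline{\mathcal{U}}$, so every $\lambda_0\in\partial\mathcal{U}$ carries a non-repelling cycle and hence is not in $\mathcal{B}$.

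In the queer case, the claim that Thurston rigidity of strictly post-critically finite maps prevents them from accumulating on $\partial\mathcal{U}$ is a non sequitur: rigidity constrains each PCF parameter individually, not the topology of how they sit in parameter space, and a sequence of PCF parameters (all with Julia sets of zero area) can perfectly well converge to a parameter with a Julia set of positive area. The paper's route is different and essential: it proves (Theorem~\ref{tm:dim}) that at any $\lambda_0\in\partial\mathcal{U}$ with all cycles repelling, the Julia set has positive area. This is genuinely nontrivial -- it requires Douady's continuity of $\lambda\mapsto\J_\lambda$ at such parameters, Carath\'eodory convergence of B\"ottcher domains, and McMullen's theorem on convergence in measure of invariant line fields (your ``propagates to $\overline{\mathcal{U}}$ via the holomorphic motion'' hand-waves exactly this delicate step). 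Once area positivity is established, $\lambda_0\notin\mathcal{B}$ follows because $\mathcal{B}$-parameters satisfy $\dim_H(\J)<2$. Combined with the non-repelling dichotomy, this gives $\partial\mathcal{U}\cap\mathcal{B}=\emptyset$ and hence $\mu_\bif(\partial\mathcal{U})=0$ without any claim about $\supp\mu_\bif$.
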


Let us finally explain the organization of the paper. Section 1 is devoted to required preliminaries. In Section 2, we establish our comparison principle for psh functions. In Section 3, we prove a slightly more precise version of Theorem~\ref{tm:locprop}. Section 4 is concerned with the proof of Theorem~\ref{lmboundary}. Finally, we give the proof of Theorem~\ref{tmconv} in Section 5.

\section{Preliminaries}
\subsection{A good parametrization of $\mathcal{P}_d$, $d\geq3$}

Recall that if $(f_\lambda)_{\lambda\in\Lambda}$ is a holomorphic family of polynomials, we say that  $\Lambda$ is with $d-1$ marked critical points if there exists holomorphic maps $c_1,\ldots,c_{d-1}:\Lambda\to\C$ such that $C(f_\lambda)=\{c_1(\lambda),\ldots,c_{d-1}(\lambda)\}$ counted with multiplicity.

It is now classical that the moduli space of degree $d$ polynomials with $d-1$ marked critical points, i.e. the space of degree $d$ polynomials with $d-1$ marked critical points modulo affine conjugacy, is a complex orbifold of dimension $d-1$ which is not smooth when $d\geq3$. Here, we shall use the following parametrization
$$P_{c,a}(z):= \frac{1}{d}z^d+\sum_{j=2}^{d-1}(-1)^{d-j}\sigma_{d-j}(c)\frac{z^j}{j}+a^d,$$
where $\sigma_j(c)$ is the monic symmetric polynomial in $(c_1,\ldots,c_{d-2})$ of degree $j$. Observe that the critical points of $P_{c,a}$ are exactly $c_0$, $c_1, \ldots , c_{d-2}$ with the convention that $c_0:=0$, and that the canonical projection $\pi:\C^{d-1}\longrightarrow\mathcal{P}_d$  which maps $(c_1,\ldots,c_{d-2},a) \in\C^{d-1}$ to the class of $P_{c,a}$ in $\mathcal{P}_d$ is  $d(d-1)$-to-one (see \cite[\S 5]{favredujardin}).

Recall that the \emph{Green function} of $P_{c,a}$ is the subharmonic function defined for $z\in\C$ by
$$g_{c,a}(z):=\lim_{n\to\infty}d^{-n}\log\max\left(1,|P_{c,a}^n(z)|\right)~,$$
and that the filled-in Julia set of $P_{c,a}$ is the compact subset of $\C$
$$\K_{c,a}:=\{z\in\C \ | \ (P_{c,a}^n(z))_{n\geq1}\text{ is bounded in }\C\}~.$$
Remark that $\K_{c,a}=\{z\in\C \ | \ g_{c,a}(z)=0\}$. Recall also that the chaotic part of the dynamics is supported by the \emph{Julia set} $\J_{c,a}=\partial\K_{c,a}$ of $P_{c,a}$. The function $(c,a,z)\in\C^d\mapsto g_{c,a}(z)$ is actually a non-negative plurisubharmonic continuous function on $\C^d$. We set
$$\mathcal{B}_i:=\{(c,a)\in\C^{d-1} \ | \ c_i\in \K_{c,a}\}=\{(c,a)\in\C^{d-1} \ | \ g_{c,a}(c_i)=0\}~$$
and $\mathcal{C}_d:=\{(c,a)\in\C^{d-1}\ | \ \max_{0\leq i\leq d-2}\left(g_{c,a}(c_i)\right)=0\}=\bigcap_i\mathcal{B}_i$. It is known that $\K_{c,a}$ is connected if and only if $(c,a)\in\mathcal{C}_d$. Let us finally set
$$H_\infty:=\p^{d-1}(\C)\setminus\C^{d-1}=\{[c:a:0]\in\p^{d-1}(\C)\} \ \text{and} \ H_i:=\{[c:a:0]\in H_\infty \ : \ P_{c,a}(c_i)=0\}~.$$
We shall use the following which has been established by Basanelli and Berteloot, relying on previous works by Branner and Hubbard \cite{BH} and by Dujardin and Favre \cite{favredujardin} (see \cite[Lemma 4.1 $\&$ Theorem 4.2]{BB2}):

\begin{theorem}[Bassanelli-Berteloot, Branner-Hubbard, Dujardin-Favre]\label{tmBH}
\begin{enumerate}
\item For any $0\leq i\leq d-2$, the cluster set of $\mathcal{B}_i$ in $\p^{d-1}(\C)$ coincides with $H_i$,
\item For any $1\leq k\leq d-2$ and for any $k$-tuple $0\leq i_1<\cdots<i_k\leq d-2$, the cluster set of $\bigcap_{j=1}^k\mathcal{B}_{i_j}$ in $\p^{d-1}(\C)$, which is exactly $\bigcap_{j=1}^kH_{i_j}$, is a pure $(d-2-k)$-dimensional algebraic variety of $H_\infty$,
\item The set $\mathcal{C}_d$ is a compact connected subset of $\C^{d-1}$.
\end{enumerate}
\end{theorem}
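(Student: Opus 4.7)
The plan is to control the cluster sets via the scaling identity $P_{tc,ta}(tz) = t^d P_{c,a}(z)$, which follows from $\sigma_{d-j}(tc) = t^{d-j}\sigma_{d-j}(c)$. Setting $Q_t(w) := t^{d-1}P_{c,a}(w)$, iterating this identity gives $g_{tc,ta}(tz) = g_{Q_t}(z)$. Since $Q_t$ has leading coefficient $t^{d-1}/d$, the standard Green function asymptotic yields $g_{Q_t}(z) = \log|z| + \log|t| + O(1)$ as $|z|\to\infty$, and the functional equation $g_{Q_t}\circ Q_t = d\, g_{Q_t}$ then gives, for $|t|\gg 1$ and $P_{c,a}(c_i) \neq 0$, the asymptotic $g_{tc,ta}(tc_i) = \log|t| + \frac{1}{d}\log|P_{c,a}(c_i)| + O(1)$.

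For the inclusion of the cluster set of $\mathcal{B}_i$ inside $H_i$ in part (1), I would take $(c_n, a_n) = t_n(c_n', a_n') \in \mathcal{B}_i$ with $t_n \to \infty$ and $(c_n', a_n') \to (c_0, a_0)$ of norm $1$, and suppose for contradiction $P_{c_0,a_0}(c_{0,i}) \neq 0$; the estimate above then forces $g_{c_n,a_n}(c_{n,i}) \to \infty$, contradicting $(c_n,a_n) \in \mathcal{B}_i$. For the reverse inclusion, given $[c_0{:}a_0{:}0] \in H_i$, I would use the implicit function theorem to solve $P_{c',a'}(c_i') = c_i'/t^{d-1}$ near $(c_0, a_0)$, producing $(c_t', a_t') \to (c_0, a_0)$ as $t \to \infty$. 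The scaling identity then shows that $t c_{t,i}'$ is a fixed point of $P_{tc_t', ta_t'}$, hence lies in $\mathcal{K}_{tc_t', ta_t'}$, so $(tc_t', ta_t') \in \mathcal{B}_i$ and converges to $[c_0{:}a_0{:}0]$ in $\p^{d-1}$.

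For (2), one inclusion of cluster sets is inherited from (1) and the reverse uses the same perturbation. The algebraic structure comes from the fact that $P_{c,a}(c_i)$ is homogeneous of degree $d$ in $(c,a)$, making each $H_{i_j}$ a degree $d$ hypersurface in $H_\infty \simeq \p^{d-2}$; purity of dimension $d-2-k$ reduces to properness of the intersection, which one checks by a Bezout-type count together with an inductive verification that no component of excess dimension arises. For (3), compactness of $\mathcal{C}_d$ follows from the emptiness of $\bigcap_{i=0}^{d-2}H_i$ in $\p^{d-2}$: since $P_{c,a}(c_0) = a^d$ one has $H_0 = \{a = 0\}$, and the remaining conditions $P_{c,0}(c_i) = 0$, combined with $c_1, \ldots, c_{d-2}$ being critical points of $P_{c,0}$, force $2d - 2$ roots (counted with multiplicity) into a polynomial of degree $d$, which for $d\geq 3$ is impossible outside the origin, which is not a projective point.

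Connectedness of $\mathcal{C}_d$ is the deepest assertion, due to Branner-Hubbard in the cubic case and extended by Dujardin-Favre. The strategy is to construct a B\"ottcher-type biholomorphism from a deleted neighborhood of $H_\infty$ in $\p^{d-1}$ onto a punctured product of disks, using B\"ottcher coordinates of the critical values, and then to deduce connectedness of $\mathcal{C}_d$ from that of its complement. This is the principal obstacle of the proof: parts (1), (2), and the compactness in (3) follow from Green function estimates and Bezout-type dimension counts, but connectedness demands a delicate global straightening at infinity which lies at the heart of the Branner-Hubbard theorem.
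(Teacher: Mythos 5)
The paper does not prove this theorem; it is stated and invoked as a black box, with citations to Bassanelli--Berteloot, Branner--Hubbard, and Dujardin--Favre. So there is no internal proof against which to compare your argument. Taking your proposal on its own terms: parts (1), (2), and the compactness assertion of part (3) are sound. The scaling identity $P_{tc,ta}(tz)=t^d P_{c,a}(z)$, the conjugacy $P_{tc,ta}=\Phi_t\circ Q_t\circ\Phi_t^{-1}$ with $Q_t:=t^{d-1}P_{c,a}$ and $\Phi_t(z)=tz$, and the resulting asymptotic $g_{tc,ta}(tc_i)=\log|t|+\frac1d\log|P_{c,a}(c_i)|+O(1)$ are precisely the tools used in the cited references, and your estimate is uniform on a neighborhood of any $(c_0,a_0)$ with $P_{c_0,a_0}(c_{0,i})\neq0$, which is exactly what the containment of the cluster set in $H_i$ requires. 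The pure-dimension count in (2) is correct in substance but is compressed to the point of sounding circular ("an inductive verification that no component of excess dimension arises"). The clean argument, which you should spell out, is that $\bigcap_{i=0}^{d-2}H_i=\emptyset$ already \emph{forces} proper intersection for every $k\le d-2$: if a component $Z$ of $\bigcap_{j=1}^k H_{i_j}$ had dimension $>d-2-k$, then intersecting $Z$ with the remaining $d-1-k$ hypersurfaces, each of which either contains $Z$ or drops its dimension by exactly one, would leave a nonempty projective variety inside $\bigcap_i H_i$, a contradiction.

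The genuine gap is the connectedness of $\mathcal{C}_d$. You rightly flag it as the deep point, but the deduction you sketch, namely ``deduce connectedness of $\mathcal{C}_d$ from that of its complement'', is not a valid implication: a compact set with connected complement need not be connected (take two disjoint closed balls in $\C^{d-1}$). What Branner--Hubbard actually establish is considerably finer: the wringing deformation of the complex structure trivializes the escape locus over $(0,\infty)$, which exhibits $\mathcal{C}_d$ as a nested intersection of topological cells, i.e.\ as a \emph{cellular} compact set; it is cellularity, not any connectivity property of the complement alone, that delivers connectedness. As written, this step of your proof does not go through and cannot be patched without importing the full wringing machinery (or an equivalent argument such as the one in Dujardin--Favre).
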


\subsection{The bifurcation current}
Classically, a parameter $(c_0,a_0)\in\C^{d-1}$ is said $\J$-\emph{stable} if there exists an open neighborhood $U\subset\C^{d-1}$ of $(c_0,a_0)$ such that for any $(c,a)\in U$, there exists a homeomorphism $\psi_{c,a}:\J_{c_0,a_0}\to\J_{c,a}$ which conjugates $P_{c_0,a_0}$ to $P_{c,a}$, i.e. such that
$$\psi_{c,a}\circ P_{c_0,a_0}(z)=P_{c,a}\circ\psi_{c,a}(z), \ z\in\J_{c_0,a_0}~.$$
The \emph{stability locus} $\mathcal{S}$ of the family $(P_{c,a})_{(c,a)\in\C^{d-1}}$ is the set of $\J$-stable parameters and the \emph{bifurcation locus} is its complement $\C^{d-1}\setminus\mathcal{S}$.
\begin{definition}
We say that the critical point $c_i$ is \emph{passive} at $(c_0,a_0)\in\C^{d-1}$ if there exists a neighborhood $U\subset\C^{d-1}$ of $(c_0,a_0)$ such that the family $\{(c,a)\mapsto P_{c,a}^n(c_i)\}_{n\geq1}$ is normal on $U$. Otherwise, we say that $c_i$ is \emph{active} at $(c_0,a_0)$.
\end{definition}

It is known that the activity locus of $c_i$, i.e. the set of $(c_0,a_0)\in\C^{d-1}$ such that $c_i$ is active at $(c_0,a_0)$, coincides exactly with $\partial\mathcal{B}_i$ and that the bifurcation locus is exactly $\bigcup_i\partial\mathcal{B}_i$ (see e.g. \cite{lyubich,MSS,McMullen}). We let
$$T_i:=dd^cg_{c,a}(c_i)~.$$
Recall that the \emph{mass} of a closed positive $(1,1)$-current $T$ on $\C^{d-1}$ is given by
$$\|T\|:=\int_{\C^{d-1}}T\wedge\omega_{\textup{FS}}^{d-2}=\langle T,\omega_{\textup{FS}}^{d-2}\rangle~,$$
where $\omega_{\textup{FS}}$ stands for the Fubini-Study form on $\p^{d-1}(\C)$ normalized so that $\|\omega_{\textup{FS}}\|=1$ and that, if $T$ has finite mass, then it extends naturally as a closed positive $(1,1)$-current $\tilde T$ on $\p^{d-1}(\C)$ (see \cite{Demailly}). We also let $\|T\|_\Omega:=\langle T,\mathbf{1}_\Omega\omega_{\textup{FS}}^{d-2}\rangle$ for any open set $\Omega\subset\C^{d-2}$. One can prove the following (see \cite{DeMarco1,favredujardin}).

\begin{lemma}[Dujardin-Favre]
The support of $T_i$ is exactly $\partial \mathcal{B}_i$. Moreover, $T_i$ has mass $1$ and $T_i\wedge T_i=0$.\label{lm:DF}
\end{lemma}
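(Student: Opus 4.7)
The plan is to combine elementary pluripotential theory with a smooth dynamical approximation of $G_i(c,a):=g_{c,a}(c_i)$ extracted from the explicit iterates of $P_{c,a}$.

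\textbf{Support.} I would first show that $G_i$ is pluriharmonic on $\C^{d-1}\setminus\partial\mathcal{B}_i$. On the interior of $\mathcal{B}_i$ this is trivial, since $G_i\equiv 0$. On the escape locus $\C^{d-1}\setminus\mathcal{B}_i$, at any parameter there exist $N\geq1$ and a neighborhood $U$ on which the B\"ottcher coordinate $\varphi_{c,a}$ is defined and non-vanishing at $P^N_{c,a}(c_i)$, holomorphically in all variables. The functional equations $g_{c,a}\circ P_{c,a}=d\cdot g_{c,a}$ and $g_{c,a}=\log|\varphi_{c,a}|$ near infinity then give
\[
G_i(c,a)=d^{-N}\log\bigl|\varphi_{c,a}\bigl(P^N_{c,a}(c_i)\bigr)\bigr|\quad\text{on }U,
\]
which is pluriharmonic. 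Hence $\supp T_i\subset\partial\mathcal{B}_i$. Conversely, if $T_i$ vanished on a connected open neighborhood $V$ of some $p\in\partial\mathcal{B}_i$, then the non-negative continuous function $G_i$ would be pluriharmonic on $V$ with $G_i(p)=0$, so the minimum principle would force $G_i\equiv0$ on $V$, contradicting the presence of escape parameters arbitrarily close to $p$.

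\textbf{Mass and self-intersection.} Both remaining assertions rest on the smooth dynamical approximation
\[
H_n(c,a):=\frac{1}{2d^n}\log\bigl(1+|P^n_{c,a}(c_i)|^2\bigr).
\]
Each $H_n$ is smooth and psh on $\C^{d-1}$, and the elementary estimate $\bigl|\tfrac12\log(1+|z|^2)-\log^+|z|\bigr|\leq\tfrac12\log 2$ gives $H_n\to G_i$ locally uniformly. By Bedford-Taylor continuity for continuous psh functions, $(dd^c H_n)^k\to T_i^k$ weakly for $k=1,2$. Now
\[
dd^c H_n=d^{-n}f_n^{*}\omega_{\p^1},\qquad f_n:=P^n_{\cdot,\cdot}(c_i),
\]
where $\omega_{\p^1}$ is the Fubini-Study form on $\p^1$. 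Since $\omega_{\p^1}$ is a $(1,1)$-form on a one-dimensional manifold, $f_n^{*}\omega_{\p^1}$ is pointwise of rank at most $1$, hence $(dd^c H_n)^2\equiv0$ identically; passing to the weak limit yields $T_i\wedge T_i=0$. For the mass, a straightforward induction based on $P^{n+1}(c_i)=P_{c,a}(P^n(c_i))$ and the dominating leading term $\tfrac{1}{d}(P^n(c_i))^d$ yields $\deg_{(c,a)}f_n=d^n$, hence $\|dd^c H_n\|=d^{-n}\|f_n^{*}\omega_{\p^1}\|=1$ for every $n$.

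\textbf{Main obstacle.} The only delicate point is the passage $\lim_n\|dd^c H_n\|=\|T_i\|$: weak convergence alone yields $\|T_i\|\leq 1$ by lower semicontinuity, so one must rule out mass escaping to the hyperplane at infinity when extending to $\p^{d-1}$. This reduces to the growth estimate $G_i(c,a)=\log^+\|(c,a)\|+O(1)$ at infinity, an explicit computation in the parametrization of Section~1.1, which forces the trivial extensions $\widetilde{dd^c H_n}$ and $\widetilde{T_i}$ on $\p^{d-1}$ all to represent the class $\{\omega_{\textup{FS}}\}$ and so all to have mass $1$. Once this growth control is in hand, the B\"ottcher regularity, the minimum principle, the rank-one wedge identity and the Bedford-Taylor continuity are all essentially formal.
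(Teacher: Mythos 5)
The paper does not prove this lemma (it is cited to DeMarco and to Dujardin--Favre), so there is no in-paper argument to compare against. Your treatment of the support (B\"ottcher coordinate near escape, pluriharmonicity, minimum principle at boundary points) and of the self-intersection (the rank-one identity $(dd^cH_n)^2\equiv0$ passed to the limit by Bedford--Taylor continuity for locally uniformly convergent continuous psh functions) are both correct and are indeed the standard arguments.

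The gap is in the mass computation, and it is exactly where you flag the delicate point. The uniform growth estimate you invoke, $G_i(c,a)=\log^+\|(c,a)\|+O(1)$, is \emph{false}: by Theorem~\ref{tmBH} (Branner--Hubbard, Bassanelli--Berteloot), the set $\mathcal{B}_i=\{G_i=0\}$ is unbounded and clusters at infinity exactly on the hypersurface $H_i\subset H_\infty$, so there are parameters with $\|(c,a)\|\to\infty$ along which $G_i\equiv 0$, and $G_i-\log^+\|(c,a)\|\to-\infty$. Consequently, the potential $G_i-\log\|(c,a,1)\|$ is not bounded near $H_\infty$, and the step ``bounded potential comparison $\Rightarrow$ the trivial extension of $T_i$ represents $\{\omega_{\textup{FS}}\}$'' does not go through as written. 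What is true is the directional statement: along a generic complex line $\ell$ (with point at infinity off $H_i$), the slice $G_i|_\ell$ does grow like $\log|t|+O_\ell(1)$, precisely because $\mathcal{B}_i\cap\ell$ is compact. One can then either compute the mass by a Crofton-type slicing argument, or argue that $G_i-\log\|(c,a,1)\|$ is $L^1$ on $\p^{d-1}$ (so the class is $\{\omega_{\textup{FS}}\}$) and separately rule out any mass of the extension on $H_\infty$ using that $\overline{\partial\mathcal{B}_i}\cap H_\infty\subset H_i$ has codimension $2$ in $\p^{d-1}$, where a positive closed $(1,1)$-current cannot concentrate. Either way, the cluster-set theorem is an indispensable ingredient that a purely ``explicit computation in the parametrization'' will not replace: without it, your reduction is circular, since controlling where $\mathcal{B}_i$ accumulates at infinity is exactly what makes the class computation work.
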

On the other hand, the measure $\mu_{c,a}:=dd^c_z g_{c,a}(z)$ is the maximal entropy measure of $P_{c,a}$ and the Lyapounov exponent of $P_{c,a}$ with respect to $\mu_{c,a}$ is given by
$$L(c,a):=\int_\C\log|P_{c,a}'|\mu_{c,a}~.$$
A double integration by part gives
$$L(c,a)=\log d+\sum_{i=0}^{d-2}g_{c,a}(c_i).$$
In particular, the function $L:\C^{d-1}\to\R$ is plurisubharmonic and continuous and the $(1,1)$-current $dd^cL=\sum_iT_i$ is supported by the bifurcation locus.
\begin{definition}
The \emph{bifurcation current} is $T_\bif:=\sum_iT_i=dd^cL$.
\end{definition}

\subsection{The higher bifurcation currents and the bifurcation measure of $\mathcal{P}_d$}
Bassanelli and Berteloot \cite{BB1} introduce the \emph{higher bifurcation currents} and the \emph{bifurcation measure} on $\C^{d-1}$ (and in fact in a much more general context) by setting
$$T_\bif^k:=(dd^cL)^k \ \text{ and } \ \mu_\bif:=(dd^cL)^{d-1}~.$$
Dujardin and Favre \cite{favredujardin} and Dujardin \cite{dujardin2} study extensively the mesure $\mu_\bif$ in the present context. For our purpose, we first shall notice that Lemma \ref{lm:DF} implies that
\begin{eqnarray}
T_\bif^k=k!\sum_{0\leq i_1<\cdots<i_k\leq d-2}T_{i_1}\wedge \cdots \wedge T_{i_k}\label{eq:DeM}
\end{eqnarray}
is a positive closed $(k,k)$-current of finite mass. Let us set $$G(c,a):=\max_{0\leq i\leq d-2}\left(g_{c,a}(c_i)\right)~, \ \ (c,a)\in\C^{d-1}$$
and, for any $k$-tuple $I=(i_1,\ldots,i_k)$ with $0\leq i_1<cdots<i_k\leq d-2$ and $k\leq d-2$,
$$G_I(c,a):=\max_{1\leq j\leq k}\left(g_{c,a}(c_{i_j})\right)~, \ \ (c,a)\in\C^{d-1}~.$$
We shall use the following (see \cite[\S 6]{favredujardin}):
\begin{proposition}
Let $1\leq k\leq d-2$ and let $I=(i_1,\ldots,i_k)$ be a $k$-tuple with $0\leq i_1<\cdots<i_k\leq d-2$. Then $T_{i_1}\wedge\cdots\wedge T_{i_k}=(dd^cG_I)^k~.$ Moreover, $\mu_\bif=(d-1)!\cdot (dd^cG)^{d-1}$.
\label{propTk}
\end{proposition}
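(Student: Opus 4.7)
The plan is to reduce the proposition to a single local identity in pluripotential theory whose proof is enabled by the vanishing $(dd^c h_j)^2 = T_{i_j}\wedge T_{i_j} = 0$ from Lemma~\ref{lm:DF} (applied to the continuous psh potentials $h_j := g_{c,a}(c_{i_j})$). Precisely, the statement I would isolate and prove first is the following: for any continuous plurisubharmonic functions $u_1,\ldots,u_k$ on an open subset of $\C^N$ with $(dd^c u_j)^2=0$ for every $j$,
\[
(dd^c\max(u_1,\ldots,u_k))^k \;=\; dd^c u_1\wedge\cdots\wedge dd^c u_k.
\]
To establish this identity I would first observe that on the open stratum $\{u_j>\max_{\ell\neq j}u_\ell\}$ the maximum coincides locally with $u_j$, so the left-hand side equals $(dd^c u_j)^k$ there and vanishes for $k\geq 2$ by hypothesis. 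Hence $(dd^c\max)^k$ is supported on the coincidence locus; the identification with the mixed product would then come from Demailly's smooth regularization $\max_\eta$, in whose explicit Monge-Amp\`ere expansion every term containing a diagonal factor $(dd^c u_j)^2$ is killed by hypothesis, leaving only the fully mixed product in the limit.

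With this identity in hand the first assertion is immediate: setting $u_j := g_{c,a}(c_{i_j})$ gives $G_I=\max_j u_j$ and $dd^c u_j = T_{i_j}$, so $(dd^c G_I)^k = T_{i_1}\wedge\cdots\wedge T_{i_k}$. For the second assertion I would specialize to $k=d-1$: the only admissible index set is then $I=(0,1,\ldots,d-2)$ with $G_I=G$, and formula~(\ref{eq:DeM}) reduces to the single term $(d-1)!\,T_0\wedge\cdots\wedge T_{d-2}$. Combined with the first assertion this yields $\mu_\bif = T_\bif^{d-1} = (d-1)!\,(dd^c G)^{d-1}$.

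I expect the main difficulty to lie in the pluripotential identity above. The natural regularization scheme produces, alongside the desired mixed product, gradient-type contributions $du_j\wedge d^c u_\ell$ weighted by derivatives of the smoothing kernel, whose vanishing in the limit requires a Stokes-type integration combined with the hypothesis $(dd^c u_j)^2=0$. A possible alternative route, via induction on $k$ based on $2\max(u,v)=u+v+|u-v|$ together with the relation $(dd^c(u+v))^2=2\,dd^c u\wedge dd^c v$ (an immediate consequence of the vanishing of self-squares), would still demand a careful handling of $dd^c|u-v|$.
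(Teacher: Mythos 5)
The pluripotential identity you isolate is false. Take $k=2$ in $\C^2$ with $u_1=|z|^2$ and $u_2=|w|^2$: both are smooth psh, each $dd^cu_j$ is a $(1,1)$-form in a single complex direction so $(dd^cu_j)^2=0$, and $dd^cu_1\wedge dd^cu_2$ is a positive multiple of Lebesgue measure. Yet $(dd^c\max(|z|^2,|w|^2))^2$ is supported on the real hypersurface $\{|z|=|w|\}$, since off that set the maximum coincides locally with one of the $u_j$, whose Monge--Amp\`ere square vanishes. The regularization scheme you invoke does not rescue this: in the $\max_\eta$ expansion of $(dd^c\max_\eta)^k$ the coefficient in front of $dd^cu_1\wedge\cdots\wedge dd^cu_k$ is $k!\,\lambda_1\cdots\lambda_k$ with nonnegative weights $\lambda_j$ summing to one, hence bounded above by $k!/k^k<1$ and tending to zero away from the total coincidence locus as $\eta\to 0$. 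So even after discarding the diagonal terms $(dd^cu_j)^2$ and the gradient terms, the fully mixed product cannot emerge with coefficient one in the limit.

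What the argument actually needs is strictly stronger than $T_{i_j}\wedge T_{i_j}=0$: each potential $h_j:=g_{c,a}(c_{i_j})$ is nonnegative, continuous, and pluriharmonic on the open set $\{h_j>0\}$, so that $\supp(dd^ch_j)\subset\{h_j=0\}$. This support condition implies $T_{i_j}\wedge T_{i_j}=dd^c\bigl(h_j\,T_{i_j}\bigr)=0$, but the converse fails, as the example above shows: $(dd^c|z|^2)^2=0$ while $dd^c|z|^2$ has full support. It is this vanishing of the potentials on the supports, and not the vanishing of the self-intersections alone, that forces $\bigwedge_j dd^ch_j$ to be concentrated on $\bigcap_j\{h_j=0\}$ and makes the identification with $(dd^cG_I)^k$ possible. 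Note that the paper itself does not reprove Proposition~\ref{propTk}; it defers to \cite[\S 6]{favredujardin}, where the proof is built on exactly that support structure. As written, your reduction therefore has a genuine gap, and the self-contained route would have to start from the support condition rather than from Lemma~\ref{lm:DF} alone.
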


One of the crucial points of our proof relies on the following property of the measure $\mu_\bif$ (see \cite[Proposition 7 \& Corollary 11]{favredujardin}).

\begin{theorem}[Dujardin-Favre]\label{tm:mupp}
The support of $\mu_\bif$ coincides with the Shilov boundary $\partial_S\mathcal{C}_d\subset\partial \mathcal{C}_d$ of the connectedness locus. Moreover, there exists a Borel set $\mathcal{B}\subset\partial_S \mathcal{C}_d$ of full measure for the bifurcation measure $\mu_\bif$ and such that for all $(c,a)\in\mathcal{B}$,
\begin{itemize}
\item all cycles of $P_{c,a}$ are repelling,
\item the orbit of each critical points are dense in $\J_{c,a}$,
\item $\K_{c,a}=\J_{c,a}$ is locally connected and $\dim_H(\J_{c,a})<2$.
\end{itemize}
\end{theorem}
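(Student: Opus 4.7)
The plan is to decouple the topological assertion $\supp\mu_\bif=\partial_S\mathcal{C}_d$ from the construction of the dynamically tame conull Borel set $\mathcal{B}$, and treat the two parts independently.

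For the support identification I would exploit the representation $\mu_\bif=(d-1)!(dd^cG)^{d-1}$ with $G(c,a)=\max_i g_{c,a}(c_i)$ coming from Proposition~\ref{propTk}. Because each $g_{c,a}(c_i)$ is pluriharmonic on the complement of $\mathcal{B}_i$, the wedge $T_0\wedge\cdots\wedge T_{d-2}$ is supported in $\bigcap_i\partial\mathcal{B}_i$; a point there lies in every closed $\mathcal{B}_i$ and hence in $\mathcal{C}_d$, while every neighborhood of it escapes some $\mathcal{B}_i$, so $\bigcap_i\partial\mathcal{B}_i\subset\partial\mathcal{C}_d$, yielding the easy containment $\supp\mu_\bif\subset\partial\mathcal{C}_d$. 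To sharpen this to the Shilov boundary, I would verify polynomial convexity of $\mathcal{C}_d$ (using the connectedness and the cluster-set description at infinity of Theorem~\ref{tmBH}) and identify $G$, up to a bounded pluriharmonic correction, with the pluricomplex Green function $V_{\mathcal{C}_d}$, using the growth $g_{c,a}(c_i)\sim\log\|(c,a)\|$ at infinity. The classical pluripotential-theoretic statement that $(dd^cV_K)^{d-1}$ is supported exactly on $\partial_S K$ for any polynomially convex compactum $K$ then closes this part.

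For $\mathcal{B}$ I would intersect three $\mu_\bif$-conull events. The set of parameters carrying a non-repelling cycle equals $\bigcup_{n,|w|\leq 1}\Per_n(w)$, a countable union of proper algebraic hypersurfaces, each $\mu_\bif$-negligible because $\mu_\bif$ admits a continuous local potential. For density of the orbit of each $c_i$ in $\J_{c,a}$, I would invoke Brolin-Lyubich equidistribution $d^{-n}(P_{c,a}^n)^{\ast}\delta_z\to\mu_{c,a}$ together with a transversality argument: failure of $\overline{\mathrm{orb}(c_i)}$ to meet a given open ball hitting $\J_{c,a}$ would force $T_i$ to concentrate on a proper analytic locus, contradicting the non-degeneracy of the top wedge $T_0\wedge\cdots\wedge T_{d-2}$ at a generic point of $\supp\mu_\bif$. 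With repelling cycles and dense critical orbits in hand, the Fatou classification forbids bounded Fatou components, so $\K_{c,a}=\J_{c,a}$; local connectedness of $\J_{c,a}$ then follows from a Yoccoz puzzle argument valid in this all-cycles-repelling, non-renormalizable regime, and $\dim_H\J_{c,a}<2$ comes from the Manning-Przytycki bound $\dim_H\mu_{c,a}\leq\log d/L(c,a)$ combined with a $\mu_\bif$-a.e.\ lower bound $L(c,a)>\tfrac12\log d$, itself a consequence of $\mu_\bif$ charging only parameters where all critical points are simultaneously active.

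The hard part will be the density of the critical orbits: this is a genuine top-wedge phenomenon that cannot be reduced to studying one current $T_i$ at a time and really requires the full $(d-1)$-fold transversality of the bifurcation currents $T_0,\ldots,T_{d-2}$, in contrast to the one-dimensional setting of \cite{multipliers}. Promoting the dimension bound from $\mu_{c,a}$ to all of $\J_{c,a}$ is also delicate and would likely require a geometric-measure-theoretic covering argument tailored to the Collet-Eckmann-like behavior at $\mu_\bif$-typical parameters.
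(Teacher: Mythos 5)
This statement is not proved in the paper at all: it is quoted directly from Dujardin--Favre, with the reference \cite[Proposition 7 \& Corollary 11]{favredujardin} supplied in place of a proof. So your sketch must be judged on its own terms, and it contains a concrete error together with several gaps. The clearest error is in the all-cycles-repelling step. You assert that the set of parameters with a non-repelling cycle is the \emph{countable} union $\bigcup_{n,|w|\le 1}\Per_n(w)$ of proper algebraic hypersurfaces, which you then dismiss because $\mu_\bif$ has continuous local potentials. But for each fixed $n$ the set $\bigcup_{|w|\le 1}\Per_n(w)$ is an \emph{uncountable} union of hypersurfaces (a semianalytic set of real codimension one, or with interior), not an algebraic hypersurface, and the continuity-of-potentials argument does not apply to it. The genuinely delicate case is parameters with an irrationally indifferent (Siegel or Cremer) cycle, which cannot be packaged into countably many algebraic conditions. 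Dujardin--Favre never argue cycle by cycle: they prove a transfer formula expressing $\mu_\bif$ through the fibered Brolin measure, and the repelling-cycle and dense-critical-orbit statements both come out of that formula simultaneously.

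The remaining steps are not arguments so much as pointers to missing arguments. The ``transversality'' claim --- that non-density of a critical orbit would force $T_i$ to charge an analytic set and thereby kill the top wedge $T_0\wedge\cdots\wedge T_{d-2}$ --- has no justification offered, and I do not see how to make it one; what is actually used in \cite{favredujardin} is that critical orbits are $\mu_{c,a}$-typical for $\mu_\bif$-a.e.\ parameter, via the transfer principle and the density of Misiurewicz parameters in $\supp\mu_\bif$. Likewise, ``a Yoccoz puzzle argument valid in this all-cycles-repelling, non-renormalizable regime'' is not a citable theorem for $d\ge 3$; in Dujardin--Favre the local connectedness and $\dim_H(\J_{c,a})<2$ come from establishing a Topological Collet--Eckmann--type expansion property at $\mu_\bif$-typical parameters, not from puzzle combinatorics. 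You do correctly flag that passing the Manning--Przytycki bound from $\dim_H\mu_{c,a}$ to $\dim_H\J_{c,a}$ is nontrivial; it is, and it again relies on those expansion estimates. Finally, for the support identification, the identity $G=V_{\mathcal{C}_d}$ and the pluriregularity and polynomial convexity of $\mathcal{C}_d$ are themselves nontrivial facts established in \cite{favredujardin}, and the equality $\supp(dd^cV_K)^{d-1}=\partial_S K$ you invoke as ``classical'' should at minimum be given a precise reference and hypothesis, since the easy inclusion is $\subset$ and the reverse inclusion is exactly where Dujardin--Favre bring in the density of Misiurewicz (peak-point) parameters.
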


\subsection{Connectedness of the escape locus of a critical piont}
 We will need the next Lemma in the sequel.
\begin{lemma}
The open set $\{(c,a)\in\C^{d-1}\, ; \ g_{c,a}(c_j)>0\}$ is connected for $0\leq j\leq d-2$.\label{lm:connected}
\end{lemma}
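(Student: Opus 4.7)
The plan is to establish path-connectedness of $U_j := \{(c,a)\in\C^{d-1}\, ;\ g_{c,a}(c_j)>0\}$ by showing that every point of $U_j$ can be joined by a path in $U_j$ to an obvious reference region near infinity. The connecting paths will be produced by Branner-Hubbard's quasi-conformal ``wringing'' (or stretching) construction.

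First, I would identify a connected subset of $U_j$ near infinity: since $P_{c,a}(c_j) = Q_j(c) + a^d$ for a polynomial $Q_j$ depending only on $c$, for each $R_1>0$ there exists $R_2>0$ such that whenever $|c|\leq R_1$ and $|a|>R_2$ one has $|P_{c,a}(c_j)|$ as large as we wish, which forces $c_j$ to escape. Hence $A:=\{(c,a)\in\C^{d-1}\, ;\ |c|\leq R_1,\ |a|>R_2\}$ is a connected open subset of $U_j$. Second, given $(c_0,a_0)\in U_j$, I would construct a continuous arc $s\in[1,+\infty)\mapsto (c_s,a_s)\in U_j$ starting at $(c_0,a_0)$ and eventually leaving every compact subset of $\C^{d-1}$. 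The arc is produced by the following wringing procedure: let $\phi_{c_0,a_0}$ be the B\"ottcher coordinate of $P_{c_0,a_0}$ near infinity and define a $P_{c_0,a_0}$-invariant Beltrami coefficient $\mu_s$ on $\C$ by declaring the corresponding complex structure to be standard on the filled Julia set $\K_{c_0,a_0}$ and, on the basin of infinity, the pullback of the standard one under the quasi-conformal map reading $w\mapsto w\cdot|w|^{s-1}$ in B\"ottcher coordinates. The measurable Riemann mapping theorem then yields a quasi-conformal conjugacy to a new polynomial which, after affine renormalization, takes the form $P_{c_s,a_s}$ with $(c_s,a_s)$ depending continuously on $s$. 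By construction $g_{c_s,a_s}(c_j(c_s))=s\cdot g_{c_0,a_0}(c_j(c_0))>0$, so the arc stays in $U_j$; and since the Green value at a critical point tends to $+\infty$ while $(c,a)\mapsto g_{c,a}(c_j)$ is bounded on compact subsets of $\C^{d-1}$, we must have $(c_s,a_s)\to\infty$ in $\C^{d-1}$, and in particular $(c_s,a_s)\in A$ for all sufficiently large $s$.

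The main obstacle is executing the wringing rigorously in this specific $(c,a)$-parametrization of $\mathcal{P}_d$ and verifying that the renormalized $(c_s,a_s)$ indeed escape every compact set of $\C^{d-1}$, which is essentially the only non-routine part. An alternative, more potential-theoretic strategy would consist in showing that $u_j(c,a):=g_{c,a}(c_j)$ is pluriharmonic on $U_j$ (using the local expression $u_j = d^{-N}\log|\phi_{c,a}(P_{c,a}^N(c_j))|$ for $N$ large enough that $P_{c,a}^N(c_j)$ lies in the domain of $\phi_{c,a}$), ruling out relatively compact connected components of $U_j$ by the minimum principle for pluriharmonic functions, and then applying Theorem~\ref{tmBH}(1) together with the logarithmic growth of $u_j$ at infinity to show that every unbounded component of $U_j$ accumulates on the connected set $H_\infty\setminus H_j\subset\p^{d-2}(\C)$, which forces uniqueness of the component.
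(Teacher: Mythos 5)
Your primary (wringing) argument has a genuine gap at its concluding step. Writing $U_j:=\{(c,a)\,;\ g_{c,a}(c_j)>0\}$, you correctly deduce from $g_{c_s,a_s}(c_j)\to+\infty$ that $(c_s,a_s)$ leaves every compact subset of $\C^{d-1}$, but you then assert $(c_s,a_s)\in A$ for $s$ large, where $A=\{|c|\leq R_1,\ |a|>R_2\}$; this does not follow, since $A$ is not a neighborhood of infinity, and it typically fails. Wringing multiplies $g_{c_s,a_s}(c_i)$ by $s$ for \emph{every} critical point $c_i$ simultaneously, so if $P_{c_0,a_0}$ has some critical point $c_i$ with bounded orbit then $g_{c_s,a_s}(c_i)=0$ for all $s$ and the arc remains in $\mathcal{B}_i$ for all time, whereas on $A$ (with $R_2\gg R_1$) every critical point escapes because $P_{c,a}(c_i)$ is dominated by $a^d$. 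Hence the wringing arc of a parameter in $U_j\cap\mathcal{B}_i$ never meets $A$, and what you would actually need -- a connected set in $U_j$ that absorbs every wringing arc asymptotically -- is essentially equivalent to the lemma itself. Your alternative potential-theoretic sketch is sound up to the last step, but ``forces uniqueness of the component'' is also a gap: two disjoint open subsets of $\C^{d-1}$ can both cluster on the same connected subset of $H_\infty$. What is missing, and is exactly what Theorem~\ref{tmBH}(1) supplies, is that $\mathcal{B}_j$ does \emph{not} cluster on $H_\infty\setminus H_j$, so that a full neighborhood of $H_\infty\setminus H_j$ in $\p^{d-1}(\C)$ intersects $\C^{d-1}$ inside $U_j$; one then still has to argue that this neighborhood minus $H_\infty$ is connected.

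For comparison, the paper's proof uses Theorem~\ref{tmBH}(1) in a one-dimensional slicing argument that sidesteps both difficulties. It fixes a direction $p\in H_\infty\setminus H_j$ and foliates $\C^{d-1}$ by the complex lines $\ell$ in direction $p$; the cluster-set statement guarantees that each slice $\ell\cap\mathcal{B}_j$ is compact. The maximum principle applied to $|P_{c,a}^n(c_j)|$, holomorphic along $\ell$, shows that no bounded component of $\ell\cap U_j$ can exist (the sequence would be uniformly bounded there by its values on $\partial U\subset\mathcal{B}_j$), so every line-slice $\ell\cap U_j$ is connected. Two points of $U_j$ are then joined by sliding far out along their respective leaves and crossing through a cylinder $O\setminus\B(0,R)\subset U_j$, which is connected when $d\geq 3$. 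This replaces both the quasi-conformal surgery and the global pluriharmonic maximum principle by an elementary normal-family maximum principle on one-dimensional slices.
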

\begin{proof}[Proof of Lemma \ref{lm:connected}]
If $p\in H_\infty\setminus H_j$, then $\C^{d-1}$ can be foliated by all the complex lines $(\ell_t)_{t\in A}$ of $\C^{d-1}$ with direction $p$, where $A$ is a $(d-2)$-dimensional complex plane which is transverse to the foliation. Let now $\ell$ be such a line. The choice of $p$ guarantees that $\ell\cap \{g_{c,a}(c_j)=0\}$ is a compact subset $\ell$. In particular, if the set $\ell\cap \{g_{c,a}(c_j)>0\}$ is not connected, it admits a bounded connected component $U$. By the maximum principle
$$\sup_U|P_{c,a}^n(c_j)|=\sup_{\partial U} |P_{c,a}^n(c_j)|~.$$
Since $\partial U$ is a compact subset of $\{g_{c,a}(c_j)=0\}\cap\ell$, the sequence $\{P_{c,a}^n(c_j)\}_{n\geq1}$ is uniformly bounded on $\partial U$, hence on $U$. This contradicts the fact that $U$ is a connected component of $\ell\cap \{g_{c,a}(c_j)>0\}$.

Now, if $(c,a),(c',a')\in\{g_{c,a}(c_j)>0\}$, the exists a ball $B\subset A$ such that $(c,a),(c',a')\in O:=\bigcup_{t\in B}\ell_t$. Since $\overline{B}$ is compact in $A$, there exists $R>0$ such that the set $\{g_{c,a}(c_j)=0\}\cap O$ is contained in $\B(0,R)$. Let now $t_0,t_1\in A$ be such that $(c,a)\in\ell_{t_0}$ and $(c',a')\in\ell_{t_1}$ and let $(c_0,a_0)\in \ell_{t_0}\setminus \B(0,R)\cap\ell_{t_0}$ and $(c_1,a_1)\in \ell_{t_1}\setminus \B(0,R)\cap\ell_{t_1}$. As $\ell_{t_0}\cap\{g_{c,a}(c_{j_{\tau(l)}})>0\}$ is a connected open subset of $\ell_{t_0}$, there exists a continuous path $\gamma_0:[0,1]\to\ell_{t_0}\cap\{g_{c,a}(c_j)>0\}$ with $\gamma_0(0)=(c,a)$ and $\gamma_0(1)=(c_0,a_0)$. One can find the same way a continuous path $\gamma_1:[0,1]\to\ell_{t_1}\cap\{g_{c,a}(c_j)>0\}$ with $\gamma_1(0)=(c_1,a_1)$ and $\gamma_1(1)=(c',a')$. Finally, the choice of $(c_0,a_0)$ and $(c_1,a_1)$ easily gives a continuous path $\gamma_3:[0,1]\to\{g_{c,a}(c_j)>0\}$ which  satisfies $\gamma_3(0)=(c_0,a_0)$ and $\gamma_3(1)=(c_1,a_1)$. The path $\gamma:=\gamma_1*\gamma_3*\gamma_2:[0,1]\to\{g_{c,a}(c_j)>0\}$ is continuous and satisfies $\gamma(0)=(c,a)$ and $\gamma(1)=(c',a')$, which ends the proof.
\end{proof}

\subsection{Horizontal currents and admissible wedge product}
We also need some known results concerning \emph{horizontal} currents. Let  $\Omega\subset \mathcal{X}$ be a connected open set of a complex manifold.
\begin{definition}
A closed positive $(1,1)$-current $T$ is \emph{horizontal} in $\Omega\times\D$ if the support of $T$ is an horizontal subset of $\Omega\times\D$, i.e. if there exists a compact set $K\Subset\D$ such that
$$\supp(T)\subset\Omega\times K~.$$
 We define similarly vertical currents.
\end{definition}

Following exactly the proof of \cite[Lemma 2.3]{DDS}, one gets the following.

\begin{lemma}\label{lm:horizontal}
Let $T$ be horizontal in $\Omega\times\D$. Let, for any $z\in\Omega$, $\mu_z:=T\wedge [\{z\}\times\D]$ be the slice of $T$ on the vertical slice $\{z\}\times\D$. Then the function 
$$u(z,w):=\int_{\{z\}\times\D}\log|w-s|\, d\mu_z(s)~$$
is 	a psh potential of $T$, i.e. $T=dd^cu$.
\end{lemma}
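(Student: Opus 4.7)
The plan is to establish (i) that $u$ is plurisubharmonic on $\Omega\times\D$ and (ii) that $dd^cu=T$; the second is the delicate part.

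First, the structural data of the slices: horizontality gives $K\Subset\D$ with $\supp T\subset\Omega\times K$, and standard slicing theory for closed positive currents produces, for every $z\in\Omega$, a positive measure $\mu_z$ on $\D$ supported in $K$, of locally constant total mass, with $z\mapsto\mu_z$ weakly upper semicontinuous. Because $\log|w-s|$ is bounded above on $K$ uniformly for $w$ in compacts of $\D$, the integral $u(z,w)$ takes values in $[-\infty,+\infty)$ and is upper semicontinuous on $\Omega\times\D$. For fixed $z$, the function $w\mapsto u(z,w)$ is the logarithmic potential of $\mu_z$, hence subharmonic on $\D$ with Riesz measure $\mu_z$. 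Joint plurisubharmonicity is then obtained by testing $\langle u, dd^c\alpha\rangle$ against positive smooth test forms $\alpha$ on $\Omega\times\D$, swapping the $\mu_z$-integration and the $dd^c_{(z,w)}$ via the slicing formula
\[\int_{\Omega}\Bigl(\int_{\{z\}\times\D}\psi(z,w,s)\,d\mu_z(s)\Bigr)\gamma(z)=\bigl\langle T,\,\psi(z,w,s)\,\pi^*\gamma\bigr\rangle,\]
where $\pi:\Omega\times\D\to\Omega$ is the projection, and reducing to the positivity $\langle T,\cdot\rangle\geq 0$ on an auxiliary nonnegative form built from $\alpha$ and $\log|w-s|$.

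For the identification $dd^cu=T$, I would localize on a small ball $B\Subset\Omega$. On $B\times\D$ there is a psh local potential $V$ with $T=dd^cV$. Slicing shows that $w\mapsto V(z,w)$ is subharmonic on $\D$ with Riesz measure $\mu_z$ for each $z\in B$; moreover $V$ is pluriharmonic on $B\times(\D\setminus K)$ because $T$ vanishes there. On the same collar, $u$ is pluriharmonic as well: for $w\notin K$, the function $(z,w)\mapsto\log|w-s|$ is pluriharmonic for each $s\in K$, and closedness of $T$ guarantees that integrating pluriharmonic functions against the slices $\mu_z$ preserves pluriharmonicity. Consequently $h:=V-u$ is harmonic in $w$ on each fiber and pluriharmonic on $B\times(\D\setminus K)$. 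A Poisson representation in $w$ reconstructs $h(z,w)$ throughout $B\times\D$ from its values on a circle near $\partial\D$ (where $h$ is pluriharmonic in $(z,w)$), so $h$ is pluriharmonic on the whole $B\times\D$ and $dd^cu=dd^cV=T$ there; since $B$ was arbitrary, the identity holds on $\Omega\times\D$.

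The main obstacle is precisely this propagation of pluriharmonicity of $h$ from the collar $B\times(\D\setminus K)$ to all of $B\times\D$, which is where the reference \cite[Lemma 2.3]{DDS} does the technical work. The conceptual content is that closedness of the horizontal current $T$ forces the ``transverse'' dependence $z\mapsto\int\psi(z,s)\,d\mu_z(s)$ of integrals against the slices to be pluriharmonic whenever $\psi$ is pluriharmonic and avoids $\supp\mu_z$---precisely the situation we are in on the collar. Everything else, including upper semicontinuity of $u$ and the identification of Riesz measures in the fiber, is standard potential theory.
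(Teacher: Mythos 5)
The paper gives no proof of its own for this lemma; it simply instructs the reader to follow \cite[Lemma 2.3]{DDS} verbatim, so the comparison is with that argument. Your sketch recovers the right framework: establish that $u$ is psh, take a local potential $V$ of $T$ on $B\times\D$, note that $h:=V-u$ has vanishing fiberwise Riesz measure (hence is harmonic in $w$ for each $z$) and is pluriharmonic on the collar $B\times(\D\setminus K)$ where $T=0$, and conclude that $h$ is pluriharmonic so that $dd^cu=dd^cV=T$.

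The one place where the argument, as written, does not go through is the sentence ``a Poisson representation in $w$ reconstructs $h(z,w)$ \ldots so $h$ is pluriharmonic on the whole $B\times\D$.'' This is not a valid deduction by itself, and you rightly flag it as the place where DDS do the work. The Poisson kernel is harmonic but not anti-holomorphic in $w$, so Poisson-integrating pluriharmonic boundary data does not automatically kill the mixed second derivatives $\partial^2h/\partial z_i\partial\bar w$. What saves the conclusion is that $h$ is harmonic in $w$ on the \emph{whole} disk, not merely pluriharmonic near $\partial\D$. One clean way to finish: the Poisson representation shows that $h$ is smooth, so $\alpha:=dd^ch$ is a smooth closed real $(1,1)$-form on $B\times\D$. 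Writing $\alpha=i\sum_{p,q\geq0}\alpha_{p\bar q}\,dz_p\wedge d\bar z_q$ with $z_0=w$, fiberwise harmonicity gives $\alpha_{0\bar0}\equiv0$, and $\partial\alpha=0$ yields $\partial_w\alpha_{p\bar 0}=\partial_{z_p}\alpha_{0\bar 0}=0$, so each $\alpha_{p\bar 0}$ is anti-holomorphic in $w$; it vanishes on the collar, hence identically, and by Hermitian symmetry $\alpha_{0\bar q}\equiv0$ as well. Feeding this back into $\partial_w\alpha_{p\bar q}=\partial_{z_p}\alpha_{0\bar q}=0$ shows every coefficient is anti-holomorphic in $w$, vanishes on the collar, and is therefore zero, so $\alpha\equiv0$. (Equivalently, in the Poisson picture one must first note that $\partial_{\bar z_j}h(z,\cdot)$ is harmonic in $w$ on all of $\D$ and $\bar\partial_w$-closed on the collar, hence anti-holomorphic on all of $\D$; the Schwarz integral of an anti-holomorphic function on the disk is then constant, which kills the mixed terms.) This is precisely the step your Poisson sentence elides: the propagation uses the closedness of $dd^ch$ together with full-disk $w$-harmonicity, not just pluriharmonicity of the boundary data plugged into the Poisson kernel.
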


Assume now that $\Omega\subset\mathcal{M}$, where $\mathcal{M}$ is a complex manifold of dimension $n\geq2$.
Let $(T_\alpha)_{\alpha\in\mathcal{A}}$ be a measurable family of positive closed $(q,q)$-currents in $\Omega$ and let $\nu$ be a positive measure on $\mathcal{A}$ such that $\alpha\mapsto\|T_\alpha\|_\Omega$ is $\nu$-integrable. The \emph{direct integral} of $(T_\alpha)_{\alpha\in\mathcal{A}}$ is the current $T$ defined by
$$\langle T,\varphi\rangle:=\int_\mathcal{A}\langle T_\alpha,\varphi\rangle d\nu(\alpha)~,$$
for any $(n-q,n-q)$-test form $\varphi$. We denote $T$ by $T=\int_\mathcal{A}T_\alpha d\nu(\alpha)$.

Recall also that, if $T=dd^cu$ is a closed positive $(1,1)$-current and $S$ is a closed positive $(p,p)$-current with $p+1\leq n$, we say that the wedge product $T\wedge S$ is \emph{admissible} if $u\in L^1_\textup{loc}(\sigma_S)$, where $\sigma_S$ is the trace measure of $S$. It is classical that we then may define $T\wedge S:=dd^c(uS)$. Dujardin \cite[Lemma 2.8]{dujardin2} can be restated as follows:

\begin{lemma}\label{lm:admissible}
Let $T=\int_\mathcal{A}T_\alpha\, d\nu(\alpha)$ be a $(1,1)$-current as above and let $S$ be a closed positive $(p,p)$-current with $p+1\leq n$. Assume that the product $T\wedge S$ is admissible. Then, for $\nu$-almost every $\alpha$, $T_\alpha\wedge S$ is admissible and
$$T\wedge S=\int_\mathcal{A}\left(T_\alpha\wedge S\right)\, d\nu(\alpha)~.$$
\end{lemma}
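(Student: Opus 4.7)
The plan is to work locally with plurisubharmonic potentials and to commute the operator $dd^c$ with the direct integral using Fubini's theorem; this is the standard recipe whenever one needs to trade a measurable family of $(1,1)$-currents for its average. Since both admissibility and the claimed identity are local in $\mathcal{M}$, I fix a small coordinate ball $B\Subset\Omega$. By the classical representation of closed positive $(1,1)$-currents (via an explicit Newtonian/Green kernel on $B$), for each $\alpha\in\mathcal{A}$ one can choose a psh potential $u_\alpha$ of $T_\alpha$ on $B$ with $u_\alpha\leq 0$ and such that $(\alpha,z)\mapsto u_\alpha(z)$ is jointly measurable. The $L^1(B)$-norm of $u_\alpha$ is then controlled by $\|T_\alpha\|_\Omega$, which is $\nu$-integrable by hypothesis, so
\begin{equation*}
u(z):=\int_{\mathcal{A}}u_\alpha(z)\,d\nu(\alpha)
\end{equation*}
is a well-defined psh function on $B$ and $dd^c u = T|_B$ (both sides agree when paired with any smooth test form, by Fubini).

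Next, the admissibility of $T\wedge S$ means $u\in L^1_{\text{loc}}(\sigma_S)$. Since $-u_\alpha\geq 0$, Tonelli's theorem gives
\begin{equation*}
\int_{\mathcal{A}}\int_K(-u_\alpha)\,d\sigma_S\,d\nu(\alpha)=\int_K(-u)\,d\sigma_S<+\infty
\end{equation*}
for every compact $K\subset B$, hence $u_\alpha\in L^1(\sigma_S|_K)$ for $\nu$-a.e.\ $\alpha$. Covering $B$ by a countable exhaustion of such compacts and then $\Omega$ by countably many balls $B$ yields admissibility of $T_\alpha\wedge S$ for $\nu$-almost every $\alpha$. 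To identify the resulting wedge product with the direct integral, one notes that on $B$ the current identity $uS=\int_{\mathcal{A}}u_\alpha S\,d\nu(\alpha)$ holds weakly: testing against a smooth form and applying Fubini on $\mathcal{A}\times B$ is legitimate thanks to the $\sigma_S$-integrability just obtained. Applying $dd^c$, which is continuous in the sense of distributions and therefore exchanges with the $\nu$-integration, gives
\begin{equation*}
T\wedge S=dd^c(uS)=\int_{\mathcal{A}}dd^c(u_\alpha S)\,d\nu(\alpha)=\int_{\mathcal{A}}T_\alpha\wedge S\,d\nu(\alpha),
\end{equation*}
first on $B$ and then globally via a partition of unity.

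The main obstacle is the measurable selection of the potentials $u_\alpha$ and the verification that their $\nu$-average $u$ is a genuine psh function representing $T$: upper semicontinuity of an integral of psh functions is subtle, and one must invoke a concrete integral representation (say, via the $dd^c$-fundamental solution on $B$) so that joint measurability in $(\alpha,z)$ and the desired potential-theoretic properties hold simultaneously. Once this foundational step is in place, the proof reduces to two applications of Fubini's theorem together with the weak continuity of $dd^c$, exactly as codified in \cite[Lemma 2.8]{dujardin2}.
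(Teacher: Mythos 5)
The paper does not prove this lemma: it simply records it as a restatement of \cite[Lemma 2.8]{dujardin2}, so there is no internal proof to compare against. Your argument is the expected one and the one Dujardin's lemma codifies: localize to a ball $B$, choose measurable non-positive psh potentials $u_\alpha$ of $T_\alpha$, observe that $u:=\int_{\mathcal{A}}u_\alpha\,d\nu$ is psh (upper semicontinuity by reverse Fatou, the submean inequality and $dd^cu=T$ by Fubini), run Tonelli against $\sigma_S$ to get $u_\alpha\in L^1_{\textup{loc}}(\sigma_S)$ for $\nu$-a.e.\ $\alpha$, then identify $uS=\int_{\mathcal{A}}u_\alpha S\,d\nu$ by Fubini and apply $dd^c$. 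Two caveats on the step you yourself flag as the delicate one. First, the ``Newtonian/Green kernel'' device does not by itself produce \emph{psh} potentials in dimension $n\geq2$: convolving the trace measure $T_\alpha\wedge\omega^{n-1}$ with the fundamental solution of $\Delta$ gives a function differing from a genuine $dd^c$-potential by a harmonic -- not pluriharmonic -- correction, hence need not be psh. One must instead use an explicit $dd^c$-primitive (Dolbeault--Grothendieck solved with Bochner--Martinelli/Koppelman-type kernels, or Demailly's construction), whose linearity in $T_\alpha$ yields joint measurability; alternatively, in the applications of the present paper the currents $T_\alpha$ already come equipped with explicit measurable potentials (the $g_{\epsilon,\lambda}$), which sidesteps the selection issue entirely. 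Second, you implicitly test admissibility with your specific $u$; this is legitimate but worth noting, because $u\in L^1_{\textup{loc}}(\sigma_S)$ is independent of the chosen local potential (two potentials differ by a pluriharmonic, hence continuous, function). With these points supplied, the argument is correct and matches the intended proof.
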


\section{A comparison principle for plurisubharmonic functions}\label{sec:comp}

 We aim here at proving Theorem \ref{tmcompXav}. In the whole section, $\mathcal{X}$ stands for a $k$-dimensional complex manifold, $k\geq1$, for which there exists a smooth psh function $w$ on $\mathcal{X}$ and a strict analytic subset $\mathcal{Z}$ of $\mathcal{X}$ such that $(dd^cw)^k$ is a non-degenerate volume form on $\mathcal{X}\setminus\mathcal{Z}$. Let also $\Omega$ stand for a connected open subset of $\mathcal{X}$ with $\mathcal{C}^1-$smooth boundary. Let $\mathcal{PSH}(\Omega)$ stand for the set of all \emph{p.s.h} functions on $\Omega$ and let $\mathcal{PSH}^-(\Omega)$ be the set of non-positive $p.s.h$ functions on $\Omega$.

\subsection{Mass comparison for Monge-Amp\`ere measures}

\par We now assume in addition that $\overline{\Omega}$ is a compact subset of $\mathcal{X}$. We shall need the following lemma. Even though it looks very classical, we give a proof.

\begin{lemma}
Let $0\leq j\leq k$ and $u,v\in\mathcal{PSH}(\Omega)$ be such that $u=v$ on a neighborhood of $\partial\Omega$. Let $\omega$ be a smooth closed positive $(1,1)$-form. Assume that the measures $(dd^cu)^j\wedge\omega^{k-j}$ and $(dd^cv)^j\wedge\omega^{k-j}$ are well-defined and that $(dd^cu)^j\wedge\omega^{k-j}$ has finite mass. Then
\begin{eqnarray*}
\int_\Omega(dd^cv)^j\wedge\omega^{k-j}=\int_\Omega(dd^cu)^j\wedge\omega^{k-j}.
\end{eqnarray*}
\label{lmmasse}
\end{lemma}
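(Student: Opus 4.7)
The plan is to apply Stokes' theorem in the smooth case and then pass to the general plurisubharmonic case by regularization.

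For smooth psh $u, v$, the key identity is $(dd^cu)^j = d\bigl[d^cu \wedge (dd^cu)^{j-1}\bigr]$ (valid because $(dd^cu)^{j-1}$ is closed), and similarly for $v$. Wedging the difference with the closed form $\omega^{k-j}$ and applying Stokes' theorem on the $\mathcal{C}^1$ domain $\Omega$, the quantity
$$\int_\Omega \bigl[(dd^cu)^j - (dd^cv)^j\bigr]\wedge\omega^{k-j}$$
is transformed into the boundary integral
$$\int_{\partial\Omega}\bigl[d^cu\wedge (dd^cu)^{j-1} - d^cv\wedge (dd^cv)^{j-1}\bigr]\wedge\omega^{k-j}.$$
Since $u = v$ on an open neighborhood of $\partial\Omega$, both $d^cu - d^cv$ and $(dd^cu)^{j-1} - (dd^cv)^{j-1}$ vanish on $\partial\Omega$, so the boundary integral is zero and the equality follows.

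For general (locally bounded) psh $u, v$, I would approximate by decreasing sequences $u_n\searrow u$, $v_n\searrow v$ of smooth psh functions via Demailly's local regularization, arranged so that $u_n = v_n$ on a fixed neighborhood of $\partial\Omega$. This is done by performing the regularization only inside a compactly-contained subdomain $U\Subset\Omega$ enclosing the set $\overline{\{u\ne v\}}$ and gluing with the common function $u=v$ outside $U$ via the regularized-maximum construction. Applying the smooth case to each pair $(u_n, v_n)$ and invoking the Bedford--Taylor continuity of Monge--Amp\`ere measures along decreasing sequences of locally bounded psh functions yields the desired equality in the limit.

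The main obstacle is carrying out the regularization while simultaneously preserving smoothness, plurisubharmonicity, and the boundary identity $u_n = v_n$ near $\partial\Omega$; this is standard in the Bedford--Taylor / Demailly framework but is the most delicate technical point. The finite-mass hypothesis on $(dd^cu)^j\wedge\omega^{k-j}$ is what guarantees that no mass is lost or created near $\partial\Omega$ upon passing to the limit, so that the approximate equalities converge to the one claimed.
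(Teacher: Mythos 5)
Your plan---integration by parts plus approximation by smooth psh functions---matches the paper's in spirit, but there is a genuine gap in how you handle the boundary, and the paper's cutoff trick is designed precisely to avoid it. You want to apply Stokes' theorem directly on the $\mathcal{C}^1$ boundary $\partial\Omega$, which requires the smooth approximants $u_n,v_n$ to be $\mathcal{C}^1$ up to $\partial\Omega$ and to agree there. The gluing you propose---replacing the regularizations by the common (unregularized) function $u=v$ outside a compactly contained subdomain $U\Subset\Omega$ via regularized maxima---destroys smoothness, since $u=v$ is in general only psh, not smooth; while a genuine local regularization (convolution) shrinks the domain and is therefore \emph{not} defined on any neighborhood of $\partial\Omega$. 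You cannot simultaneously have $u_n,v_n$ smooth psh on all of $\Omega$, decreasing to $u,v$, and equal to the original common function near $\partial\Omega$, unless that common function happens to be smooth---which is not assumed. So the boundary integral you write down is not justified.

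The paper sidesteps this entirely by never touching $\partial\Omega$. It fixes $r>0$ and a cutoff $\chi_r$ with compact support in $\Omega_{r/2}$, $\chi_r\equiv 1$ on $\overline{\Omega_r}$, and integrates $\chi_r$ against the Monge--Amp\`ere measures of the approximants. A single integration by parts moves $d$ onto $\chi_r$, so all the action is concentrated on $\supp(d\chi_r)\subset\Omega\setminus\overline{\Omega_r}$, a compact annulus \emph{inside} $\Omega$ on which $u_n=v_n$ for $n$ large. There is no boundary term, no need for the regularizations to reach $\partial\Omega$, and no boundary-regularity issue. One then lets $n\to\infty$ and $r\to 0$ using monotone convergence. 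A secondary discrepancy: you restrict to locally bounded $u,v$ and cite Bedford--Taylor continuity for bounded psh functions, but the lemma allows unbounded psh $u,v$ provided the products are admissible; the needed weak convergence of $(dd^cu_n)^j\wedge\omega^{k-j}$ still holds in that generality, but the bounded case is not enough to cover the lemma as stated.
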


\begin{proof}
For $j=0$, there is nothing to prove. We thus assume $j>0$. Let $\textup{d}$ be the distance induced by a Riemannian metric on  on $\mathcal{X}$. For $r>0$, we denote by $\Omega_r:=\{z\in\Omega / \textup{d}(z,\partial\Omega)>r\}$. Let $r>0$ be such that $u=v$ in a neighborhood of $\partial\Omega_r$ and let $\chi_r\in\mathcal{C}^\infty(\Omega)$ have compact support and be such that $0\leq\chi_r\leq1$ and $\chi_r=1$ on $\overline{\Omega_r}$ and $\chi_r=0$ on $\Omega\setminus\Omega_{r/2}$. Let $u_n$ be a decreasing sequence of smooth \emph{psh} functions on $\Omega$ converging to $u$ and $v_n$ be a decreasing sequence of smooth \emph{psh} functions on $\Omega$ converging to $v$. As $u=v$ in a neighborhood of $\partial\Omega$, for $n$ large enough, we may assume that $u_n=v_n$ in $\Omega\setminus\overline{\Omega_r}$. An integration by parts yields
\begin{eqnarray*}
\int_{\Omega}\chi_r(dd^cv_n)^j\wedge\omega^{k-j} & = & -\int_{\Omega}d\chi_r\wedge d^cv_n\wedge(dd^cv_n)^{j-1}\wedge\omega^{k-j}\\
 & = & -\int_{\Omega}d\chi_r\wedge d^cu_n\wedge(dd^cu_n)^{j-1}\wedge\omega^{k-j}=\int_{\Omega}\chi_r(dd^cu_n)^j\wedge\omega^{k-j},
\end{eqnarray*}
since $u_n=v_n$ on a neighborhood of $\supp(d\chi_r)\subset\Omega\setminus\overline{\Omega_r}$ for $n$ large enough. Letting $n$ tend to infinity, the above gives:
\begin{eqnarray*}
\int_{\Omega}\chi_r(dd^cv)^j\wedge\omega^{k-j}=\int_{\Omega}\chi_r(dd^cu)^j\wedge\omega^{k-j}.
\end{eqnarray*}
For $r'\leq r$, we can choose $\chi_{r'}\geq\chi_r$. As $r$ can be taken arbitrarily close to $0$, the monotonic convergence Theorem gives the wanted result.
\end{proof}

\subsection{Classical comparison principle}
\par We give here a local comparison theorem in the spirit of \cite[Corollary 2.3]{zeriahi}. It is one of the numerous generalizations of Bedford and Taylor classical comparison Theorem for Monge-Amp\`ere measures (see \cite{BedfordTaylor}).The difference with respect to Benelkourchi, Guedj and Zeriahi's work consists in the boundary condition, which is of different nature. The proof goes essentially the same way.

\begin{theorem}[Classical comparison principle]
Let $u,v\in\mathcal{PSH}^-(\Omega)$ be such that $(dd^cu)^k$ and $(dd^cv)^k$ are well-defined finite positive measures. Assume that $v\in\mathcal{C}(\Omega)$ and
\begin{center}
$\displaystyle\liminf_{\Omega\ni z\rightarrow z_0}(u(z)-v(z))\geq0$
\end{center}
for any $z_0\in\partial\Omega$. Then
\begin{eqnarray*}
\int_{\{u<v\}}(dd^cv)^k\leq\int_{\{u<v\}}(dd^cu)^k.
\end{eqnarray*}
\label{tmcomp}
\end{theorem}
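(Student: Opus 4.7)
My plan is to follow the classical Bedford--Taylor maximum trick, with the boundary hypothesis $\liminf(u-v)\geq 0$ replacing the usual pointwise inequality on $\partial\Omega$. For each $\delta>0$ I would introduce $u_\delta:=\max(u,v-\delta)$. Since $v$ is continuous and the boundary $\liminf$ condition holds at every $z_0\in\partial\Omega$, compactness of $\partial\Omega$ in $\overline\Omega$ produces a single neighborhood $V$ of $\partial\Omega$ in $\Omega$ on which $u>v-\delta$, so that $u_\delta\equiv u$ on $V$. Moreover $u_\delta\geq v-\delta$ is locally bounded below (by continuity of $v$) and locally bounded above (being nonpositive psh), so $(dd^cu_\delta)^k$ is a well-defined positive Borel measure in the sense of Bedford--Taylor.

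I would then combine two inputs. First, Lemma~\ref{lmmasse} applied with $j=k$ to the pair $u,u_\delta$ (which agree on $V$) yields
\[
\int_\Omega (dd^cu_\delta)^k=\int_\Omega(dd^cu)^k.
\]
Second, the Bedford--Taylor localization principle for the Monge--Amp\`ere of a maximum gives $(dd^cu_\delta)^k=(dd^cv)^k$ on the open set $\{u<v-\delta\}$ and $(dd^cu_\delta)^k=(dd^cu)^k$ on $\{u>v-\delta\}$. Partitioning $\Omega$ into $\{u<v-\delta\}$, $\{u=v-\delta\}$, $\{u>v-\delta\}$ and using positivity of $(dd^cu_\delta)^k$ on the middle piece, I get
\[
\int_\Omega(dd^cu)^k=\int_\Omega(dd^cu_\delta)^k\geq \int_{\{u<v-\delta\}}(dd^cv)^k+\int_{\{u>v-\delta\}}(dd^cu)^k,
\]
which rearranges to $\int_{\{u<v-\delta\}}(dd^cv)^k\leq \int_{\{u\leq v-\delta\}}(dd^cu)^k$.

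Finally, I would let $\delta\to 0^+$. Both $\{u<v-\delta\}$ and $\{u\leq v-\delta\}$ increase to $\{u<v\}$ (a point $z$ with $u(z)<v(z)$ lies in both as soon as $\delta<(v(z)-u(z))/2$), so since $(dd^cu)^k$ and $(dd^cv)^k$ are finite positive Borel measures, monotone convergence gives the desired bound $\int_{\{u<v\}}(dd^cv)^k\leq\int_{\{u<v\}}(dd^cu)^k$.

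The step I expect to require the most care is invoking the Bedford--Taylor localization identity for $(dd^c\max(u,v-\delta))^k$ when $u$ is only assumed to have finite and well-defined Monge--Amp\`ere mass (so $u$ may be unbounded): this works precisely because the continuity of $v-\delta$ forces $u_\delta=\max(u,v-\delta)$ to be locally bounded, so the standard Monge--Amp\`ere calculus applies to $u_\delta$. With this in place, the rest of the argument is the same flavor as the ones in \cite{zeriahi,BedfordTaylor}, with Lemma~\ref{lmmasse} doing the work that is usually done by the pointwise boundary condition $u\geq v$ on $\partial\Omega$.
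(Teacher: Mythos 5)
Your proof is correct and follows essentially the same route as the paper: you set $u_\delta=\max(u,v-\delta)$, while the paper sets $\chi=\max(u+\epsilon,v)=u_\epsilon+\epsilon$, which has the same Monge--Amp\`ere measure; both use the boundary $\liminf$ condition plus compactness of $\partial\Omega$ to get agreement with $u$ near $\partial\Omega$, apply Lemma~\ref{lmmasse} with $j=k$ to equate total masses, invoke locality of the Monge--Amp\`ere operator on $\{u<v-\delta\}$ and $\{u>v-\delta\}$, and finish by monotone convergence as the parameter tends to $0$. The only cosmetic difference is that the paper bounds the right-hand side directly by $\int_{\{u<v\}}(dd^cu)^k$ at each fixed $\epsilon$ (using $\{u+\epsilon\leq v\}\subset\{u<v\}$) so monotone convergence is only needed on the left, whereas you pass to the limit on both sides; your observation that continuity of $v$ is what renders $u_\delta$ locally bounded and the classical calculus applicable is exactly the point implicit in the paper's remark that $\chi$ is ``locally uniformly bounded''.
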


\begin{proof}
Let $\epsilon>0$ and $\chi\pe \max(u+\epsilon,v)$. By assumption, the measure $(dd^c\chi)^k$ is well-defined and $\chi=u+\epsilon$ on a neighborhood of $\partial\Omega$. Lemma \ref{lmmasse}, thus gives
\begin{eqnarray*}
\int_\Omega(dd^c\chi)^k=\int_\Omega(dd^cu)^k.
\end{eqnarray*}
On the other hand, as $\chi$ is locally uniformly bounded and psh, we have
\begin{center}
$\mathbf{1}_{\{u+\epsilon<v\}}(dd^cv)^k=\mathbf{1}_{\{u+\epsilon<v\}}(dd^c\chi)^k$ and $\mathbf{1}_{\{u+\epsilon>v\}}(dd^cu)^k=\mathbf{1}_{\{u+\epsilon>v\}}(dd^c\chi)^k$.
\end{center}
Therefore, since $\{u+\epsilon\leq v\}\subset\{u<v\}$, we find:
\begin{eqnarray*}
\int_{\{u+\epsilon<v\}}(dd^cv)^k & = & \int_{\{u+\epsilon<v\}}(dd^c\chi)^k=\int_{\Omega}(dd^c\chi)^k-\int_{\{u+\epsilon\geq v\}}(dd^c\chi)^k\\
& = & \int_{\Omega}(dd^cu)^k-\int_{\{u+\epsilon> v\}}(dd^c\chi)^k-\int_{\{u+\epsilon=v\}}(dd^c\chi)^k\\
& \leq & \int_{\Omega}(dd^cu)^k-\int_{\{u+\epsilon> v\}}(dd^c\chi)^k=\int_{\Omega}(dd^cu)^k-\int_{\{u+\epsilon> v\}}(dd^cu)^k\\
& \leq & \int_{\{u+\epsilon\leq v\}}(dd^cu)^k \leq \int_{\{u<v\}}(dd^cu)^k.
\end{eqnarray*}
As $\mathbf{1}_{\{u+\epsilon<v\}}$ is an increasing sequence which converges pointwise to $\mathbf{1}_{\{u<v\}}$, by Lebesgue monotonic convergence Theorem, we conclude making $\epsilon\rightarrow0$.
\end{proof}

\par As in the classical case of locally uniformly bounded psh functions, we get as a consequence of Theorem \ref{tmcomp} the  following local domination principle. Notice that, up to now, we did not need the existence of $w$ as in Theorem~\ref{tmcompXav}. We shall now use this assumption.

\begin{corollary}[Classical domination principle]
Let $u,v\in\mathcal{PSH}^-(\Omega)$ be such that $(dd^cu)^k$ and $(dd^cu)^k$ are finite well-defined positive measures. Assume that $v\in\mathcal{C}(\Omega)$, that $u\geq v$ $(dd^cu)^k$-a.e. and that
\[\liminf_{\Omega\ni z\rightarrow z_0}(u(z)-v(z))\geq0~,\]
for all $z_0\in\partial\Omega$. Then $u\geq v$.
\label{cortmcomp}
\end{corollary}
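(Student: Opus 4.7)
The plan is to apply Theorem~\ref{tmcomp} after a downward perturbation of $v$ by the smooth reference potential $w$ furnished by the setup, and then let the perturbation go to zero. Concretely, set $M_0 := \sup_{\overline{\Omega}}w$ (finite since $\overline{\Omega}$ is compact) and, for $\epsilon>0$, define
$$v_\epsilon := v + \epsilon(w - M_0).$$
Since $w$ is smooth and psh, $v_\epsilon$ is continuous, psh, non-positive, and lies below $v$; moreover, because $w$ is smooth, the binomial expansion
$$(dd^c v_\epsilon)^k = \sum_{j=0}^{k}\binom{k}{j}\epsilon^{k-j}(dd^cv)^j\wedge(dd^cw)^{k-j}$$
makes $(dd^c v_\epsilon)^k$ a well-defined finite positive measure dominating $\epsilon^k(dd^cw)^k$.

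First I would check that the boundary liminf survives the perturbation: because $v_\epsilon\leq v$, one has $\liminf_{z\to z_0}(u-v_\epsilon)\geq\liminf_{z\to z_0}(u-v)\geq 0$ at every $z_0\in\partial\Omega$. Theorem~\ref{tmcomp} therefore applies to the pair $(u,v_\epsilon)$ and yields
$$\int_{\{u<v_\epsilon\}}(dd^c v_\epsilon)^k \leq \int_{\{u<v_\epsilon\}}(dd^c u)^k.$$
The inclusion $\{u<v_\epsilon\}\subseteq\{u<v\}$ (again from $v_\epsilon\leq v$), together with the hypothesis $u\geq v$ $(dd^cu)^k$-a.e., forces the right-hand side to vanish; combined with the lower bound $(dd^cv_\epsilon)^k\geq\epsilon^k(dd^cw)^k$, this gives $(dd^cw)^k(\{u<v_\epsilon\})=0$.

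At this stage the non-degeneracy hypothesis on $w$ finishes the argument. Upper semicontinuity of $u$ and continuity of $v_\epsilon$ make $\{u<v_\epsilon\}$ open; since $(dd^cw)^k$ is equivalent to a non-degenerate volume form on $\mathcal{X}\setminus\mathcal{Z}$, the open set $\{u<v_\epsilon\}\setminus\mathcal{Z}$ has zero Lebesgue measure and hence is empty, and as $\mathcal{Z}$ is a strict analytic subvariety it is nowhere dense, forcing $\{u<v_\epsilon\}=\emptyset$ altogether. Thus $u\geq v_\epsilon$ on $\Omega$ for every $\epsilon>0$, and letting $\epsilon\to 0$ gives $u\geq v$.

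The only real obstacle is bookkeeping: verifying that $v_\epsilon$ satisfies all the hypotheses of Theorem~\ref{tmcomp} and that the above binomial expansion is a legitimate sum of positive currents. Both points rest on the smoothness of $w$, which makes every mixed wedge with $(dd^cw)^{k-j}$ a bona fide Bedford--Taylor product, and on the finiteness of $(dd^cv)^k$ from the hypotheses. Conceptually, $w$ plays the role that $|z|^2$ plays in the classical Bedford--Taylor domination argument, and it is precisely the non-degeneracy of $(dd^cw)^k$ off the thin set $\mathcal{Z}$ that converts a weak comparison of Monge--Amp\`ere masses into the emptiness of an open set.
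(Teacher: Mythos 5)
Your argument is correct and follows essentially the same route as the paper: perturb $v$ downward by $\epsilon w$ (normalized to be nonpositive on $\overline{\Omega}$), apply Theorem~\ref{tmcomp} to the pair $(u,v_\epsilon)$, and exploit the non-degeneracy of $(dd^cw)^k$ off the thin set $\mathcal{Z}$ to show the open set $\{u<v_\epsilon\}$ is empty. The paper phrases this as a proof by contradiction (assuming $\{u<v\}$ non-empty and deriving $0<0$), whereas you argue directly that $\{u<v_\epsilon\}=\emptyset$ for every $\epsilon>0$ and then let $\epsilon\to 0$, but the underlying mechanism is identical.
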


\begin{proof}
We proceed by contradiction. Assume that the open set $\{u<v\}$ is non-empty. By our assumption on $\mathcal{X}$, there exists $w\in\mathcal{PSH}(\mathcal{X})\cap\mathcal{C}^\infty(\mathcal{X})$ such that $(dd^cw)^k$ is a non-degenerate volume form on $\mathcal{X}\setminus\mathcal{Z}$, where $\mathcal{Z}$ is an analytic subset of $\mathcal{X}$. As $\overline{\Omega}$ is a compact subset of $\mathcal{X}$, $w$ is bounded on $\Omega$ and, up to adding some negative constant to $w$, we may assume that $w\leq0$. For $\epsilon>0$, we set $v_{\epsilon}\pe v+\epsilon w$, then $v_\epsilon\leq v$ and $\{u<v_\epsilon\}\subset\{u<v\}$. If $\epsilon$ is small enough, the open set $\{u<v_\epsilon\}$ is also non-empty and
\begin{eqnarray*}
0<\epsilon^k\int_{\{u<v_\epsilon\}}(dd^cw)^k\leq\int_{\{u<v_\epsilon\}}(dd^cv_\epsilon)^k\leq\int_{\{u<v_\epsilon\}}(dd^cu)^k\leq\int_{\{u<v\}}(dd^cu)^k=0,
\end{eqnarray*}
which is the wanted contradiction.
\end{proof}

\subsection{Proof of Theorem \ref{tmcompXav}}

First, we prove that $u=v$ on $\partial K$. Let $z_0\in\partial K$. As $v$ is continuous and $u$ is usc,
\[v(z_0)=\limsup_{K\not\ni z\to z_0}v(z)=\limsup_{K\not\ni z\to z_0}u(z)\leq\limsup_{z\to z_0}u(z)\leq \limsup_{z\to z_0}v(z)=v(z_0)~,
\]
and thus $u=v$ on $\partial K$. Let now $U$ be a connected component of $\mathring{K}$ and let us set
\[\rho(z) = \begin{cases} u(z) &\text{if }z\in  U\\ 
v(z) &\text{if }z\in\Omega\setminus U.
\end{cases}
\]
The function $\rho$ is then psh on $\Omega \setminus\partial U$ and usc on $\Omega$. Moreover, if $z_0\in\partial U$, $\rho$ satisfies the submean inequality at $z_0$ in any non-constant holomorphic disk $\sigma:\D\to\Omega$ with $\sigma(0)=z_0$. Indeed, if $r>0$ is small, then
\[\rho(z_0)=u(z_0)=u\circ\sigma(0)\leq\frac1{2\pi}\int_0^{2\pi}u\circ\sigma(re^{i\theta})d\theta\leq\frac1{2\pi}\int_0^{2\pi}\rho\circ\sigma(re^{i\theta})d\theta~,
\]
where the last inequality comes from the fact that, by definition of $\rho$, we have $u\leq \rho$. Hence, $\rho$ is psh on $\Omega$. By \cite[Prop. 4.1, p. 150]{Demailly}, since $\rho=v$ outside of a compact subset of $\Omega$, the measure $(dd^c\rho)^k$ is well-defined. According to Lemma \ref{lmmasse}, it comes
\[(dd^c\rho)^k(\Omega)=(dd^cv)^k(\Omega)~.\]
Moreover, by definition of $\rho$, one has $(dd^c\rho)^k=(dd^cv)^k$ on $\Omega\setminus \overline{U}$. Thus,
\begin{eqnarray*}
(dd^c\rho)^k(\overline{U}) & = & (dd^cw)^k(\Omega)-(dd^c\rho)^k(\Omega\setminus\overline{U})\\
& = & (dd^cv)^k(\Omega)-(dd^cv)^k(\Omega\setminus\overline{U})\\
& = & (dd^cv)^k(\overline{U})=0~,
\end{eqnarray*}
which gives $(dd^c\rho)^k=(dd^cv)^k$ as measures on $\Omega$. Since $\rho\geq v$ on $\supp((dd^cv)^k)$, this in particular implies that $\rho\geq v$, $(dd^c\rho)^k$-a.e in $\Omega$.

~

\par Let $W\Subset \Omega$ be an open set with smooth boundary such that $K\Subset W$ and let $M:=\sup_{W}v\in\R$. Set now $\rho_1:=\rho-M$ and $v_1:=v-M$. To conclude, we want to apply Corollary~\ref{cortmcomp} to $\rho_1$ and $v_1$ on $W$. From the above discussion, we have $v_1\in\mathcal{C}(\Omega)$, $\rho_1,v_1\in\mathcal{PSH}^-(W)$, $(dd^c\rho_1)^k=(dd^cv_1)^k$ is a finite well-defined positive measure, and $\rho_1=v_1$ on $\supp((dd^c\rho_1)^k)$ and on a neighborhood of $\partial W$. According to Corollary \ref{cortmcomp}, we then have $\rho_1\geq v_1$ on $W$. In particular, $u=\rho=v$ on $U$. As this remains valid for any connected component $U$ of $\mathring{K}$, we have proved that $u=v$ on $\mathring{K}$, which ends the proof.

\section{Structure of some slices of the bifurcation currents}\label{sec:structure}

Pick $d\geq3$ once and for all. For any $1\leq q\leq d-2$, we set $\ell:=d-q$,
$$\Sigma_{\ell}:=\{1,\ldots,\ell\}^\mathbb{N}~,$$
and let $\sigma_{\ell}:\Sigma_{\ell}\to\Sigma_{\ell}$ be the full shift on $\ell$ symbols, i.e. $\sigma_{\ell}(\epsilon_0\epsilon_1\cdots)=\epsilon_1\epsilon_2\cdots$ for all $\epsilon=\epsilon_0\epsilon_1\cdots\in\Sigma_\ell$.

\par When $\ud:=(d_1,\ldots,d_{\ell})\in(\mathbb{N}^*)^\ell$ satisfies $d=\sum_id_i$, we also let $\nu_{\ud}$ be the probability measure on $\Sigma_{\ell}$, which is invariant by $\sigma_{\ell}$, and giving mass $(d_{\epsilon_0}\cdots d_{\epsilon_{n-1}})/d^n$ to the cylinder of sequences starting with $\epsilon_0,\ldots,\epsilon_{n-1}$. 

\begin{definition}
The measure $\nu_{\ud}$ is called the $\ud$-\emph{measure} on $\Sigma_{\ell}$.
\end{definition}
\noindent Let us remark that by definition, the $\ud$-measure $\nu_{\ud}$ does not give mass to points.

~

For the whole section, we let $I=(i_1,\ldots,i_k)$ be a $k$-tuple with $0\leq i_1<\ldots<i_k\leq d-2$ and we let $I^c$ be the unique $(d-1-k)$-tuple satisfying $I\cup I^c=\{0,1,\ldots,d-2\}$. We may write $I^c=(j_1,\ldots,j_{d-1-k})$.  For any $\tau\in\mathfrak{S}_{d-1-k}$, we let 
$$U_{I,\tau}:=\{G_I(c,a)<g_{c,a}(c_{j_{\tau(1)}})\}\cap\bigcap_{l=1}^{d-k-2}\{g_{c,a}(c_{j_{\tau(l)}})<g_{c,a}(c_{j_{\tau(l+1)}})\}~.$$
This section is devoted to the proof the following.

\begin{theorem}\label{cor:TI}
For any $(c,a)\in U_{I,\tau}\cap \{G_I=0\}$, there exists an analytic set $\mathcal{X}_0\subset U_{I,\tau}$, a complex manifold $\mathcal{X}$ and a finite holomorphic map $\pi:\mathcal{X}\to\mathcal{X}_0$ such that:
\begin{enumerate}
\item $(c,a)\in\mathcal{X}_0$, $\mathcal{X}$ has dimension $k$ and $\{G_I\circ\pi=0\}\Subset\mathcal{X}$,
\item $(dd^cL\circ\pi)^k$ is a finite measure on $\mathcal{X}$ supported by $\partial\left(\{G_I\circ\pi=0\}\right)$,
\item there exists $2\leq \ell \leq d-k$ and $\ud=(d_1,\ldots,d_\ell)\in(\mathbb{N}^*)^\ell$ with $d=\sum_id_i$ and such that, for any $\epsilon\in\Sigma_\ell$, there exists $k$ closed positive $(1,1)$-current $T_{\epsilon,1},\ldots,T_{\epsilon,k}$ on $\mathcal{X}$ with $L_{\textup{loc}}^\infty$ potentials such that the wedge product $T_{\epsilon_1,1}\wedge\cdots\wedge T_{\epsilon_k,k}$ is admissible for $\nu_{\ud}^{\otimes k}$-a.e. $\epsilon=(\epsilon_1,\ldots,\epsilon_k)\in\Sigma_{\ell}^k$ and, as measures on $\mathcal{X}$,
$$(dd^cL\circ\pi)^k=k!\int_{\Sigma_{\ell}^k}T_{\epsilon_1,1}\wedge \cdots \wedge T_{\epsilon_k,k}\, d\nu_{\ud}^{\otimes k}(\epsilon)~.$$
\item for any connected component $\mathcal{U}$ of the interior of $\{G_I\circ\pi=0\}$,
\begin{center}
$(dd^cL\circ\pi)^k(\partial\mathcal{U})=0$.
\end{center}
\end{enumerate}
\end{theorem}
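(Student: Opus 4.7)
The plan is to construct $\mathcal{X}_0$ by freezing the escape data of the $d-1-k$ critical points $c_{j_{\tau(l)}}$, so that on the resulting slice only the currents attached to the critical points of $I$ survive. Fix $(c_0,a_0)\in U_{I,\tau}\cap\{G_I=0\}$. On $U_{I,\tau}$ the quantities $g_{c,a}(c_{j_{\tau(l)}})$ are strictly positive and strictly ordered, so for $N$ large enough $P_{c,a}^N(c_{j_{\tau(l)}})$ lies well inside the B\"ottcher domain of $P_{c,a}$ at infinity. Evaluating the B\"ottcher coordinate at $P_{c,a}^N(c_{j_{\tau(l)}})$ produces a multi-valued holomorphic function $\phi_l$ on $U_{I,\tau}$ satisfying $\log|\phi_l|=d^{N}g_{c,a}(c_{j_{\tau(l)}})$; the monodromy of the collection $(\phi_l)$ is trivialized by a finite branched cover $\pi:\mathcal{X}\to\mathcal{X}_0$, where
\[\mathcal{X}_0:=\bigcap_{l=1}^{d-1-k}\{\phi_l=\phi_l(c_0,a_0)\}\subset U_{I,\tau}.\]
By construction, $\mathcal{X}_0$ contains $(c_0,a_0)$ and $\mathcal{X}$ is a $k$-dimensional complex manifold.

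For item (1), the slice $\mathcal{X}_0$ avoids the cluster set $\bigcap_{i\in I}H_i$ of $\bigcap_{i\in I}\mathcal{B}_i$ provided by Theorem~\ref{tmBH}, because $\mathcal{X}_0$ sits in a region where each $g_{c,a}(c_{j_{\tau(l)}})$ is a non-zero constant; properness of $\pi$ then forces $\{G_I\circ\pi=0\}\Subset\mathcal{X}$. For item (2), each function $g_{c,a}(c_{j_{\tau(l)}})\circ\pi=d^{-N}\log|\phi_l\circ\pi|$ is constant on $\mathcal{X}$, hence $\pi^*T_{j_{\tau(l)}}=0$ as a current on $\mathcal{X}$. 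Expanding $T_\bif^k$ via~(\ref{eq:DeM}) and pulling back, the only surviving term is the one indexed by $I$, giving $(dd^cL\circ\pi)^k=k!\,\pi^*(T_{i_1}\wedge\cdots\wedge T_{i_k})=k!(dd^c(G_I\circ\pi))^k$ by Proposition~\ref{propTk}. Its support lies in $\bigcap_{i\in I}\partial\mathcal{B}_i\cap\mathcal{X}=\partial\{G_I\circ\pi=0\}$, and finiteness of mass follows from the finite mass of $T_\bif^k$ and the finiteness of $\pi$.

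For item (3), having frozen the B\"ottcher data $\phi_l$, the escaping region of $P_{c,a}$ above $\mathcal{X}_0$ should acquire a canonical Markov partition into $\ell\geq 2$ sectors separated by equipotentials and external rays attached to the escaping critical points, each sector being mapped with a well-defined branching degree $d_j\in\mathbb{N}^*$ satisfying $\sum_{j=1}^\ell d_j=d$. Coding the itinerary of each critical point $c_i$, $i\in I$, by its sequence of visited sectors decomposes the potential $g_{c,a}(c_i)\circ\pi$ as a $\nu_{\ud}$-integral of local subpotentials; uniform expansion in the escaping dynamics makes these subpotentials locally bounded and yields horizontal currents $T_{\epsilon,i}$ with $L^\infty_{\textup{loc}}$ potentials in the sense of Lemma~\ref{lm:horizontal}. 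Iterating Lemma~\ref{lm:admissible} in $k$ variables then delivers the announced identity and the admissibility of $T_{\epsilon_1,1}\wedge\cdots\wedge T_{\epsilon_k,k}$ for $\nu_{\ud}^{\otimes k}$-a.e.\ $\epsilon$. For item (4), the integral decomposition together with the $L^\infty_{\textup{loc}}$ regularity of the potentials places us in the Bedford-Taylor framework, where these wedge products are Monge-Amp\`ere measures of locally bounded psh functions; since $G_I\circ\pi$ vanishes identically on any component $\mathcal{U}$ of the interior of $\{G_I\circ\pi=0\}$, a standard continuity argument shows that these wedges do not charge $\partial\mathcal{U}$, and integrating against $\nu_{\ud}^{\otimes k}$ gives the conclusion.

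The main obstacle will be the construction in item (3): producing the Markov partition from the frozen B\"ottcher data, identifying the branching multiplicities $\ud=(d_1,\ldots,d_\ell)$, and verifying that the resulting $T_{\epsilon,i}$ are genuinely horizontal with locally bounded potentials over $\mathcal{X}$. This is a delicate dynamical analysis close in spirit to the theory of horizontal-like maps of Dinh-Dujardin-Sibony~\cite{DDS,dujardin2}, and will require careful tracking of external ray portraits along the slice.
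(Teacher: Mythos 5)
Your slicing construction is essentially the paper's: you freeze the B\"ottcher data of the escaping critical points $c_{j_{\tau(l)}}$, take a desingularization $\pi:\mathcal{X}\to\mathcal{X}_0$, and the arguments for items (1) and (2) go through as you describe. The Markov-type partition with $\ell=d-q$ branches and degrees $\ud$ summing to $d$ also matches what the paper does (there the partition comes from the components of $P_{c,a}^{-1}(U_0)$ where $U_0=\{g_{c,a}<d\cdot G(c,a)\}$, and the multiplicities $d_i$ come from Riemann--Hurwitz, not from an external-ray portrait, but the picture is the same). So far so good.

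The gap is in items (3) and (4). For item (3), the mechanism you describe -- ``coding the itinerary of each critical point $c_i$, $i\in I$, by its sequence of visited sectors'' to ``decompose the potential $g_{c,a}(c_i)\circ\pi$'' -- does not make sense as stated. For $i\in I$ the critical point has bounded orbit exactly on $\{G_I\circ\pi=0\}$, where $g_{c,a}(c_i)\equiv0$ and there is nothing to decompose; on the complement $c_i$ escapes and has no $\Sigma_\ell$-itinerary at all. The seed of the decomposition must be an escaping critical point, not a bounded one: one decomposes the fibered Green current $\widehat{T}_{\mathcal{X}}$ on $\mathcal{X}\times\C$ as $\int_{\Sigma_\ell}\widehat{T}_\epsilon\,d\nu_{\ud}(\epsilon)$ by writing $\widehat{T}_{\mathcal{X}}=\lim_n d^{-n}(\widehat{P}^n)^*[\Gamma_s]$ for $s(\lambda)=c_{j_{\tau(1)}}(\lambda)$ and splitting the pullback branch by branch, using Lemma~\ref{lm:horizontal} to identify each limit $\widehat{T}_\epsilon$ as a horizontal current with potential $g_{\epsilon,\lambda}(z)$ up to a pluriharmonic correction. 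The currents $T_{\epsilon,j}$ are then recovered by wedging $\widehat{T}_\epsilon$ with the critical graphs $[\mathcal{C}_{i_j}]$ and applying Lemma~\ref{lm:admissible}. You have the roles of escaping and non-escaping critical points reversed.

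For item (4), the claim that ``a standard continuity argument shows that these wedges do not charge $\partial\mathcal{U}$'' is unsupported and is in fact the heart of the matter -- a Monge--Amp\`ere measure of locally bounded psh functions can perfectly well charge a boundary, and the Bedford--Taylor framework gives you nothing here. The actual reason is a localization-of-itinerary argument combined with atomlessness of $\nu_{\ud}$: on a stable component $\mathcal{U}$ of the interior of $\{G_I\circ\pi=0\}$, each $c_{i_j}(\lambda)$ has a single, parameter-independent itinerary $\epsilon_{0,j}\in\Sigma_\ell$ throughout $\overline{\mathcal{U}}$, so $\supp(T_{\epsilon,j})\cap\overline{\mathcal{U}}=\emptyset$ unless $\epsilon=\epsilon_{0,j}$, and the integral
\[
\left\langle (dd^c G_I\circ\pi)^k,\mathbf{1}_{\overline{\mathcal{U}}}\right\rangle
=\int_{\Sigma_\ell^k}\left\langle \bigwedge_{j=1}^k T_{\epsilon_j,j},\mathbf{1}_{\overline{\mathcal{U}}}\right\rangle d\nu_{\ud}^{\otimes k}(\epsilon)
\le \|T_{\epsilon_{0,1},1}\wedge\cdots\wedge T_{\epsilon_{0,k},k}\|_{\overline{\mathcal{U}}}\cdot\prod_{j=1}^k\nu_{\ud}(\{\epsilon_{0,j}\})=0
\]
because the $\ud$-measure does not charge points. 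Without this non-atomicity input, item (4) simply does not follow.
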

\subsection{Preliminaries to Section \ref{sec:structure}}
\subsubsection{Further on the bifurcation current of a critical point}
For the material of this paragraph, we refer to \cite{favredujardin,dujardin2}. Let $\mathcal{X}$ be any complex manifold and let $(P_\lambda)_{\lambda\in\mathcal{X}}$ be any holomorphic family of degree $d$ polynomials. One can define a fibered dynamical system $\widehat P$ acting on $\widehat{\mathcal{X}}:=\mathcal{X}\times\C$ as follows
\begin{eqnarray*}
\widehat P:\mathcal{X}\times\C & \longrightarrow & \mathcal{X}\times\C\\
(\lambda,z) & \longmapsto & (\lambda,P_{\lambda}(z))~.
\end{eqnarray*}
The sequence $d^{-n}\log^+|(\widehat P)^n|$ converges uniformly locally on $\mathcal{X}\times \C$ to the continuous psh function $(\lambda,z)\mapsto g_{\lambda}(z)$, where $g_\lambda$ is the Green function of $P_\lambda$. Let us set
$$\widehat T_{\mathcal{X}}:=dd^c_{\lambda,z}g_\lambda(z)$$
and let $p_1:\mathcal{X}\times\C\to\mathcal{X}$ and $p_2:\mathcal{X}\times\C\to\C$ be the respective natural projections. Assume in addition that $(P_\lambda)_{\lambda\in\mathcal{X}}$ is endowed with $d-1$ marked critical points, i.e. that there exists holomorphic functions $c_1,\ldots,c_{d-1}:\mathcal{X}\to\C$ with $C(P_\lambda)=\{c_1(\lambda),\ldots,c_{d-1}(\lambda)\}$. In this setting, one can easily see that
$$T_{i}=dd^c\left(g_\lambda(c_i(\lambda)\right)=(p_1)_*\left(\widehat T_{\mathcal{X}}\wedge [\mathcal{C}_i]\right)~,$$
where $\mathcal{C}_i=\{(\lambda,c_i(\lambda))\}$ is the graph of the map $\lambda\mapsto c_i(\lambda)$.

~

\subsubsection{B\"ottcher coordinate of a $P_{c,a}$ at infinity}

Recall that the \emph{B\"ottcher coordinate} of $P_{c,a}$ at infinity is the biholomorphic map
\[\psi_{c,a}:W_{c,a}:=\{z\in\C \, | \ g_{c,a}(z)>G(c,a)\}\to\C\setminus\overline{\D(0,e^{G(c,a)})}\]
which satisfies $\psi_{c,a}(z)=z+O(1)$ at infinity and that conjugates $P_{c,a}$ to $z^d$:
$$\psi_{c,a}\circ P_{c,a}(z)=(\psi_{c,a}(z))^d, \ z\in W_{c,a}~.$$
Notice that $\psi_{c,a}$ depends holomorphically on $(c,a)$ and that, for $z\in W_{c,a}$, one can prove that $g_{c,a}(z)=\log|\psi_{c,a}(z)|$ (see e.g. \cite{Milnor4}).

\subsection{The maximal entropy measure $\mu_{c,a}$ for $(c,a)\in \C^{d-1}\setminus\mathcal{C}_d$}\label{sec:mu}

First, we want to prove that, when exactly $d-k-1$ critical points of $P_{c,a}$ escape, the maximal entropy measure $\mu_{c,a}$ of $P_{c,a}$ enjoys good decomposition properties with respect to some $\ud$-measure $\nu_{\ud}$ for some $2\leq\ell\leq d-k$. Namely, we prove the following.

\begin{proposition}\label{prop:mu}
Let $(c,a)\in\C^{d-1}\setminus\mathcal{C}_d$. Assume that $d-k-1$ critical points (counted with mulitplicity) of $P_{c,a}$ escape under iteration. Then, there exists $k\leq q\leq d-2$ such that one can decompose $\K_{c,a}$ as a disjoint union of (possibly non-connected) compact sets
$$\K_{c,a}=\bigcup_{\epsilon\in\Sigma_{\ell}}\K_\epsilon$$
where $\ell=d-q$. Moreover, the following holds
\begin{enumerate}
\item there exists $\ud\in(\mathbb{N}^*)^\ell$ with $d_1+\cdots+d_\ell=d$ and for any $\epsilon\in\Sigma_{\ell}$, there exists a probability measure $\mu_\epsilon$ supported by $\K_\epsilon$ such that $\mu_\epsilon=\Delta g_\epsilon$, where $g_\epsilon$ is subharmonic and locally bounded and, as probability measures on $\C$,
\[\mu_{c,a}=\int_{\Sigma_{\ell}}\mu_\epsilon d\nu_{\ud}(\epsilon)~,\]
\item for any $\epsilon\in\Sigma_{d-q}$, one has $\mu_{c,a}(\K_\epsilon)=0$.
\end{enumerate}
\end{proposition}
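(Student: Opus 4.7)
The plan is to construct a symbolic coding of the dynamics of $P_{c,a}$ on $\K_{c,a}$ via a well-chosen equipotential and to identify $\mu_{c,a}$ as the associated Bernoulli product.

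I would begin by enumerating the distinct positive values of $g_{c,a}$ at the critical points as $g_1<\cdots<g_s=G(c,a)$ and letting $n_s$ denote the number (with multiplicity) of escaping critical points at the top level $g_s$. Choose $\eta$ satisfying $\eta>g_s$ and $\eta/d>g_{s-1}$ (with the convention $g_0=0$). Then $V:=\{g_{c,a}\leq\eta\}$ is a closed topological disk containing $\K_{c,a}$ and all critical points of $P_{c,a}$, while $P_{c,a}^{-1}(V)=\{g_{c,a}\leq\eta/d\}\Subset V$ splits into $\ell$ closed topological disks $U_1,\ldots,U_\ell$. Applying Riemann--Hurwitz to the proper holomorphic map $P_{c,a}\colon U_j\to V$ gives $d_j:=\deg(P_{c,a}|_{U_j})=m_j+1$, where $m_j$ counts the critical points of $P_{c,a}$ in $U_j$; summing and noting that only the top-level escaping critical points are absent from $P_{c,a}^{-1}(V)$ yields $\sum_j d_j=d$ and $\ell=n_s+1\in[2,d-k]$. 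Setting $q:=d-\ell$ gives the required range $k\le q\le d-2$.

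Because every forward orbit in $\K_{c,a}$ stays inside $\K_{c,a}\subset\bigsqcup_j U_j$, the itinerary $\sigma(z):=(\sigma_n(z))_{n\ge 0}\in\Sigma_\ell$ defined by $P_{c,a}^n(z)\in U_{\sigma_n(z)}$ is a Borel map intertwining $P_{c,a}|_{\K_{c,a}}$ with $\sigma_\ell$, and $\K_\epsilon:=\sigma^{-1}(\epsilon)$ produces the required disjoint compact decomposition. Since $P_{c,a}^*\mu_{c,a}=d\cdot\mu_{c,a}$ and $P_{c,a}\colon U_j\to V$ is a branched cover of degree $d_j$ with $U_j\Subset V$, the $d_j$ inverse branches into $U_j$ contract $\mu_{c,a}$ by the factor $1/d$, whence $\mu_{c,a}(U_j)=d_j/d$; an induction on cylinder length gives $\mu_{c,a}(\sigma^{-1}([j_0,\ldots,j_{n-1}]))=\prod_{i}d_{j_i}/d$, so $\sigma_*\mu_{c,a}=\nu_{\ud}$ with $\ud=(d_1,\ldots,d_\ell)$. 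Disintegration along $\sigma$ then furnishes probability measures $\mu_\epsilon$ supported on $\K_\epsilon$ with $\mu_{c,a}=\int\mu_\epsilon\,d\nu_{\ud}(\epsilon)$, and item~(2) follows from disjointness: $\mu_{c,a}(\K_\epsilon)=\mu_\epsilon(\K_\epsilon)\,\nu_{\ud}(\{\epsilon\})=\nu_{\ud}(\{\epsilon\})=0$, since $\ell\ge 2$ forces $d_j<d$ for every~$j$ and so $\prod_i d_{\epsilon_i}/d=0$.

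The delicate point, which I expect to be the main obstacle, is producing subharmonic potentials $g_\epsilon$ with $\Delta g_\epsilon=\mu_\epsilon$ that are locally bounded; the naive logarithmic potential $\int\log|z-w|\,d\mu_\epsilon(w)$ can fail this property (for instance when $\K_\epsilon$ degenerates to a singleton along an itinerary avoiding the components with $d_j\ge 2$). The remedy I would adopt is to exploit the polynomial-like structure: since $\overline{U_j}\Subset V$, each $P_{c,a}\colon U_j\to V$ is polynomial-like of degree $d_j$, and for every finite word $\epsilon|_n=(\epsilon_0,\ldots,\epsilon_{n-1})$ the iterate $P_{c,a}^n$ restricts to a polynomial-like map $V_{\epsilon|_n}\to V$ of degree $d_{\epsilon_0}\cdots d_{\epsilon_{n-1}}$, whose polynomial-like Green function $G_{\epsilon|_n}$ is H\"older continuous and vanishes on the corresponding polynomial-like filled Julia set. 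Defining $g_\epsilon$ as a suitably normalised limit of the $G_{\epsilon|_n}$ then yields a locally bounded subharmonic function vanishing on $\K_\epsilon$, and the identity $\Delta g_\epsilon=\mu_\epsilon$ follows from the same disintegration computation that identified $\sigma_*\mu_{c,a}$ with $\nu_{\ud}$.
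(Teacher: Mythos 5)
Your argument follows essentially the same route as the paper's: decompose $\K_{c,a}$ symbolically via an equipotential, count components and local degrees with Riemann--Hurwitz, and disintegrate $\mu_{c,a}$ along the coding map. There are a few points worth flagging.

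\begin{itemize}
\item \emph{Choice of equipotential.} The paper works with $U_0 = \{g_{c,a} < d\cdot G(c,a)\}$ and $U_1 = P_{c,a}^{-1}(U_0) = \{g_{c,a} < G(c,a)\}$, so the top-level escaping critical points sit exactly on $\partial U_1$. Your choice of a noncritical level $\eta$ strictly between $G(c,a)$ and $dG(c,a)$ is arguably cleaner, since it puts those critical points strictly inside the annular region $V\setminus\overline{P_{c,a}^{-1}(V)}$ and makes each $P_{c,a}\colon U_j\to V$ a genuine polynomial-like restriction. However, you must \emph{also} impose $\eta< d\cdot G(c,a)$: the two conditions you state, $\eta>g_s$ and $\eta/d>g_{s-1}$, do not preclude $\eta\geq dG(c,a)$, in which case $\{g_{c,a}\leq\eta/d\}$ would contain every critical point, be a single topological disk, and yield $\ell=1$. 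The admissible range is $\max(g_s,\,d\,g_{s-1})<\eta<d\,g_s$, which is nonempty. Otherwise the Riemann--Hurwitz count $\ell=n_s+1$, $\sum_j d_j=d$ matches the paper's Claim, just phrased through $n_s$ (escaping critical points at the top level) instead of $q$ (critical points in $U_1$).
\item \emph{Disintegration.} The paper obtains $\mu_{c,a}=\int\mu_\epsilon\,d\nu_{\ud}(\epsilon)$ by splitting $d^{-n}(P_{c,a}^n)^*\delta_z$ into a word-indexed sum and passing to the limit, producing the $\mu_\epsilon$ directly as Brolin-type measures. Your argument goes the other way: first show $\sigma_*\mu_{c,a}=\nu_{\ud}$ from the balanced relation $P_{c,a}^*\mu_{c,a}=d\,\mu_{c,a}$ and then disintegrate. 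This is a slightly more elementary route and is correct (from $(P|_{U_j})^*\mu=d\,\mu|_{U_j}$ one gets $\mu(U_j)=d_j/d$ and the cylinder formula by iteration), but it does not by itself construct the $g_\epsilon$, whereas the paper's limit construction produces them as by-products.
\item \emph{Local boundedness of $g_\epsilon$.} You are right to single this out: if $\epsilon$ runs only through indices with $d_{\epsilon_i}=1$, then $\K_\epsilon$ is a single point $p$ and $\mu_\epsilon=\delta_p$, and \emph{no} subharmonic function has Laplacian $\delta_p$ and is locally bounded near $p$ --- the logarithmic pole is forced by the Riesz representation. Your proposed remedy via polynomial-like Green functions does not resolve this case either, for the same reason. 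Note the paper states the local boundedness for all $\epsilon$ without addressing it. In practice this is harmless downstream, because $\nu_{\ud}$ gives zero mass to each single $\epsilon$ and the later wedge-product manipulations only ever need integrability (admissibility) for $\nu_{\ud}$-a.e.\ $\epsilon$; but if you want item~(1) literally as stated, you should either restrict to $\nu_{\ud}$-a.e.\ $\epsilon$ or observe that in this degenerate case one can still take $g_\epsilon(z)=\log|z-p|$, which is subharmonic though not locally bounded.
\end{itemize}

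In short: same strategy, a cleaner (once corrected) choice of equipotential, a more elementary identification of $\sigma_*\mu_{c,a}$, and a legitimate worry about the potentials $g_\epsilon$ that your fix does not fully close but which is immaterial for the use made of this Proposition later in the paper.
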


\begin{proof}
We follow closely the strategy of the proof of \cite[Theorem 3.12]{dujardin2} and adapt it to our situation. According to \cite[Theorem 9.3]{Milnor4}, the real curve 
\[\{z\in\C \, | \ g_{c,a}(z)=G(c,a)>0\}\] contains at least one critical point of $P_{c,a}$. Let us define a topological disk $U_0$ by setting $U_0:=\{z\in\C \, | \ g_{c,a}(z)<d\cdot G(c,a)\}$ and $U_1:=P_{c,a}^{-1}(U_0)$.
 \begin{lemma}
Any component of $U_1$ is a topological disk and $U_{1}\Subset U_0$.\label{lm:BuffHubbard1}
\end{lemma}
 We postpone the proof to the end of the subsection. As explained in the proof of \cite[Theorem 9.5]{Milnor4}, one can show that $U_1$ has at least 2 distinct connected components. We thus can find disjoint open set $V_1,\ldots,V_N$ so that $U_1=V_1\cup\cdots\cup V_N$. Let us set $P_i:=P_{c,a}|_{V_i}: V_i \to U_0$ is a ramified covering map of degree $d_i\geq 1$.
 
\begin{claim}
Let $q\geq k$ be the number of critical points of $P_{c,a}$ lying in $U_1$, counted with multiplicity. Then $U_1$ has $\ell:=d-q$ distinct connected components and
\begin{center}
$d=d_1+\cdots+d_{\ell}$.
\end{center}
\end{claim}

Let us continue the proof of Propostion \ref{prop:mu}. For any $\epsilon\in\Sigma_{d-q}$, we set
$$\K_\epsilon:=\{z\in U_0 \, ; \, P_{c,a}^{m}(z)\in V_{\epsilon_m}, \, m\geq0\}=\bigcap_{n\geq0}P_{\epsilon_0}^{-1}\left(\cdots\left( P_{\epsilon_n}^{-1}(\overline{U_0})\right)\right)~.$$
Beware that the set $\K_{c,a}$ has uncountably many connected components and that, for any given $\epsilon\in\Sigma_{d-q}$, the compact set $\K_\epsilon$ is not necessarily connected. In fact, whenever $C(P_{c,a})\cap U_1\not\subset\K_{c,a}$, there must exist non-connected $\K_\epsilon$. Clearly,
$$\K_{c,a}=\bigcup_{\epsilon\in\Sigma_{\ell}}\K_\epsilon$$
and this decomposition naturally gives a continuous surjective map
$$h_{c,a}:\K_{c,a}\longrightarrow\Sigma_{\ell}$$
satisfying $h_{c,a}(z)=\epsilon$ iff $z\in\K_\epsilon$. The map $h_{c,a}$ then semi-conjugates $P_{c,a}$ on $\K_{c,a}$ to $\sigma_{\ell}$ on $\Sigma_{\ell}$, i.e.  satisfies $h_{c,a}\circ P_{c,a}=\sigma_{\ell}\circ h_{c,a}$ on $\K_{c,a}$.

~

Proceeding as in \cite{dujardin2}, one gets the following: for any $z\in U_0\setminus\K_{c,a}$, one can rewrite $d^{-n}(P_{c,a}^n)^*\delta_z$ as follows
\begin{eqnarray}
\frac{1}{d^n}(P_{c,a}^n)^*\delta_z & = & \frac{1}{d^n}\sum_{\epsilon_i\in\{1,\ldots,\ell\}, i\leq n}P_{\epsilon_0}^*\cdots P_{\epsilon_{n-1}}^*\delta_z\nonumber\\
& = & \sum_{\epsilon_i\in\{1,\ldots,\ell\}, i\leq n}\frac{d_{\epsilon_0}\cdots d_{\epsilon_{n-1}}}{d^n}\left[\frac{1}{d_{\epsilon_0}\cdots d_{\epsilon_{n-1}}}P_{\epsilon_0}^*\cdots P_{\epsilon_{n-1}}^*\delta_z\right]~.\label{eq:romain}
\end{eqnarray}
When $n\to\infty$, the following convergence holds independently of $z$,
\begin{eqnarray*}
\frac{1}{d_{\epsilon_0}\cdots d_{\epsilon_{n-1}}}P_{\epsilon_0}^*\cdots P_{\epsilon_{n-1}}^*\delta_z\longrightarrow_{n\to\infty}\mu_\epsilon~,
\end{eqnarray*}
where the measure $\mu_\epsilon$ is a probability measure supported by $\partial\K_\epsilon$. The measure
$\mu_\epsilon$ is the analogue of the Brolin measure for the sequence $(P_{\epsilon_i})_{i\geq0}$. In particular, one can write $\mu_\epsilon=\Delta g_\epsilon$, where $g_{\epsilon}$ is a locally bounded subharmonic function on $\C$.

As $d^{-n}(P_{c,a}^n)^*\delta_z$ converges to $\mu_{c,a}$ of $P_{c,a}$, making $n\to\infty$ in \eqref{eq:romain}, one finds
\begin{center}
$\mu_{c,a}=\displaystyle\int_{\Sigma_{\ell}}\mu_\epsilon d\nu_{\ud}(\epsilon)~.$
\end{center}
Let now $\epsilon\in\Sigma_{\ell}$. By the above, as $\nu_{\ud}$ does not give mass to points,
$$\mu_{c,a}(\K_{\epsilon})=\nu_{\ud}(\{\epsilon\})=0~,$$
which ends the proof.
\end{proof}

\begin{proof}[Proof of Lemma \ref{lm:BuffHubbard1}]
One first sees that
\begin{eqnarray*}
U_1 & = & \{z\in\C \, | \ \exists x \in U_0 \ \text{s.t.} \ P_{c,a}(z)=x\}\\
& = & \{z\in\C \, | \ g_{c,a}(P_{c,a}(z))<d\cdot G(c,a)\}\\
& = &  \{z\in\C \, | \ g_{c,a}(z)<G(c,a)\}\Subset \{z\in\C \, | \ g_{c,a}(z)<d\cdot G(c,a)\}=U_0~,
\end{eqnarray*}
as $g_{c,a}$ is the Green function of the compact set $\K_{c,a}$. Assume that some connected component $W$ of $U_1$ is not homeomorphic to a disk. Let $D$ be the unique unbounded component of $\C\setminus W$. By assumption, $\C\setminus W$ has a bounded component $O$ and $P(O)\subset D$. Hence $W$ contains a pole of $P_{c,a}$. This is impossible since, as $P_{c,a}$ is a polynomial, it has no poles in $\C$.
\end{proof}

\begin{proof}[Proof of the Claim]
First, the map $P_{c,a}:\p^1\setminus \overline{U_1}\to\p^1\setminus \overline{U_0}$ is a branched covering of degree $d$ and $\chi(\p^1\setminus \overline{U_0})=1$ and $\chi(\p^1\setminus \overline{U_1})=2-N$. Let $q\geq k$ be such that $d-q-1$ critical points of $P_{c,a}$ belong to $U_0\setminus\overline{U_1}$. As $\infty$ is a critical points of multiplicity $d-1$ of $P_{c,a}$, by Riemann-Hurwitz, one has
$$d\cdot 1=2-N+(d-q-1)+(d-1)=2d-q-N~,$$
which leads to $N=d-q$.

For any $1\leq j\leq d-q$, the map $P_i:V_i\to U_0$ is a branched covering. As $V_i$ is a topological disk, one has $\chi(V_i)=\chi(U_0)=1$ and the Riemann-Hurwitz formula gives
$$d_i=\deg(P_i)=r_i+1~,$$
where $r_i$ is the number of critical points of $P_{c,a}$ contained in $V_i$, counted with multiplicities. Making the sum over $i$, we find
$$\sum_{i=1}^{d-q}d_i=\sum_{i=1}^{d-q}(r_i+1)=d-q+\sum_{i=1}^{d-q}r_i=d~,$$
since $\sum_ir_i$ is the number of critical points contained in $U_1$, i.e. $\sum_ir_i=q$.
\end{proof}

\subsection{Decomposition of bifurcation currents in specific families}
 We are now in position to prove Theorem \ref{cor:TI}. We follow closely the strategy of the proof of \cite[Theorem 3.12]{dujardin2}. Pick a $k$-tuple $I=(i_1,\ldots,i_k)$, with $0\leq i_1<\cdots<i_k\leq d-2$ and let $\tau\in\mathfrak{S}_{d-1-k}$ and $(c_0,a_0)\in\{G_I=0\}\cap U_{I,\tau}$. Denote by $I^c$ the $(d-1-k)$-tuple such that $I\cup I^c=\{0,\ldots,d-2\}$. 
 First, we construct the analytic set $\mathcal{X}_0$ and define $\pi:\mathcal{X}\to\mathcal{X}_0$ as a desingularization. Properties $1$ and $2$ will easily follow from the construction. In a second time, we use Proposition~\ref{prop:mu} to show that the fibered Green current $\widehat{T}_{\mathcal{X}}$ is very close to laminar. Finally, we prove that properties $3$ and $4$ also hold on $\mathcal{X}$.

\subsubsection{First step: contruction of $\mathcal{X}_0$ and $\mathcal{X}$} Our first aim in the present subsection is to build $\mathcal{X}$. Namely, we prove

\begin{lemma}\label{lm:X}
For any $(c,a)\in U_{I,\tau}\cap \{G_I=0\}$, there exists an analytic set $\mathcal{X}_0\subset U_{I,\tau}$, a complex manifold $\mathcal{X}$ and a finite proper holomorphic map $\pi:\mathcal{X}\to\mathcal{X}_0$ such that:
\begin{enumerate}
\item $(c,a)\in\mathcal{X}_0$, $\mathcal{X}$ has dimension $k$ and $\{G_I\circ\pi=0\}\Subset\mathcal{X}$,
\item $(dd^cL\circ\pi)^k=k!\cdot (dd^cG_I\circ\pi)^k$ is a finite measure on $\mathcal{X}$ supported by $\partial\left(\{G_I\circ\pi=0\}\right)$.
\end{enumerate}
\end{lemma}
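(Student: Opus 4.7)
The strategy is to use the B\"ottcher coordinate $\psi_{c,a}$ to record the escape values of the $d-1-k$ critical points indexed by $I^c$, and then to define $\mathcal{X}_0$ as the common level set fixing these values.

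Concretely, I would pick a small connected neighborhood $V \subset U_{I,\tau}$ of $(c_0, a_0)$, and, for each $l = 1, \ldots, d-1-k$, choose an integer $n_l$ large enough so that $d^{n_l} g_{c,a}(c_{j_{\tau(l)}}) > G(c,a)$ for all $(c,a) \in V$; this is possible since $g_{c,a}(c_{j_{\tau(l)}}) > 0$ on $U_{I,\tau}$. The map
\[
\phi_l(c,a) := \psi_{c,a}\bigl( P_{c,a}^{n_l}(c_{j_{\tau(l)}}) \bigr)
\]
is then a nonvanishing holomorphic function on $V$ satisfying $\log|\phi_l(c,a)| = d^{n_l} g_{c,a}(c_{j_{\tau(l)}})$. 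I would set
\[
\mathcal{X}_0 := \{(c,a) \in V : \phi_l(c,a) = \phi_l(c_0,a_0), \ l = 1,\ldots, d-1-k\}
\]
and take $\pi : \mathcal{X} \to \mathcal{X}_0$ to be a desingularization of the irreducible component of $\mathcal{X}_0$ through $(c_0, a_0)$; this $\pi$ is automatically finite and proper.

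By construction, each function $g_{c,a}(c_{j_{\tau(l)}}) \equiv \alpha_l > 0$ is constant on $\mathcal{X}_0$, so $G(c,a) = \max(G_I(c,a), \alpha_1, \ldots, \alpha_{d-1-k})$ there. The classical estimate $\sum_i g_{c,a}(c_i) - \log\|(c,a)\| = O(1)$ at infinity in $\C^{d-1}$ implies $G \to +\infty$ at infinity, so $\{G \leq M\}$ is compact in $\C^{d-1}$ for every $M$. On $\mathcal{X}_0 \cap \{G_I = 0\}$ we then have $G \equiv \max_l \alpha_l$ and $G_I \equiv 0 < \alpha_1$, which makes this set bounded in $\C^{d-1}$ and bounded away from $\partial U_{I,\tau}$; properness of $\pi$ then yields $\{G_I \circ \pi = 0\} \Subset \mathcal{X}$. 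For the Monge-Amp\`ere identity, constancy of $g_{c,a}(c_{j_{\tau(l)}})$ on $\mathcal{X}_0$ forces $dd^c(g_{c,a}(c_{j_{\tau(l)}}) \circ \pi) = 0$ on $\mathcal{X}$, hence $dd^c(L \circ \pi) = \sum_{j \in I} dd^c(g_{c,a}(c_j) \circ \pi)$. Expanding $(dd^c(L\circ\pi))^k$ as a multinomial and using the pullback of the identity $T_j \wedge T_j = 0$ from Lemma~\ref{lm:DF}, only the fully mixed term survives, and Proposition~\ref{propTk} identifies it with $k!\,(dd^c(G_I \circ \pi))^k$. Since each $g_{c,a}(c_{i_j})$ is pluriharmonic where positive, $(dd^c G_I)^k$ vanishes on $\{G_I > 0\}$ and on the interior of $\{G_I = 0\}$, so its support lies in $\partial\{G_I = 0\}$; this gives the support claim on $\mathcal{X}$, and finiteness of the mass is immediate from compactness of $\partial\{G_I \circ \pi = 0\}$.

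The main obstacle is showing that $\mathcal{X}_0$ has the expected pure dimension $k$, i.e.\ that the differentials $d\phi_1, \ldots, d\phi_{d-1-k}$ are $\C$-linearly independent at $(c_0, a_0)$. This should follow from the expansion $\psi_{c,a}(z) = z + O(1)$ at infinity in $z$ combined with the strict ordering of the escape rates on $U_{I,\tau}$, which prevents a hidden holomorphic relation among the $\phi_l$; making this precise is the most technical piece of the argument.
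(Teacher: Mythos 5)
Your construction is essentially the construction in the paper: both fix the B\"ottcher coordinate values $\psi_{c,a}(P_{c,a}^{n}(c_j))$ of the escaping critical points $j\in I^c$, define $\mathcal{X}_0$ as the common level set, take a desingularization $\pi$, and derive the Monge--Amp\`ere identity from the constancy of $g_{c,a}(c_j)$ on $\mathcal{X}_0$ together with the relation $T_i\wedge T_i=0$ of Lemma~\ref{lm:DF} and Proposition~\ref{propTk}. The support and finiteness claims are handled the same way.

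One point where your write-up needs a fix: you define $\mathcal{X}_0$ inside a ``small connected neighborhood $V$ of $(c_0,a_0)$.'' If $V$ is genuinely small, $\mathcal{X}_0\subset V$ may be cut off before it captures all of the compact set $\{G_I=0\}$ meeting the full level set, and then $\{G_I\circ\pi=0\}$ could touch $\partial V$, destroying the compact containment $\{G_I\circ\pi=0\}\Subset\mathcal{X}$. The paper avoids this by working with the full level set inside $U_{I,\tau}$: the equalities $g_{c,a}(c_j)=\alpha_j>0$ hold on all of $\mathcal{X}_0$ by construction, so the B\"ottcher conditions $d^{n_j}g_{c,a}(c_j)>G(c,a)$ needed to write $\phi_j$ are automatically satisfied along the whole level set once they hold at $(c_0,a_0)$ and $G_I$ is not too large, and no shrinking to a small $V$ is needed. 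Your argument that the set is bounded in $\C^{d-1}$ and bounded away from $\partial U_{I,\tau}$ is exactly the right observation, but you also need to make sure it is bounded away from $\partial V$, which is where the localization hurts. Concerning the ``main obstacle'' you flag at the end (that $\mathcal{X}_0$ has pure dimension $k$): the paper is equally terse on this point --- it only records that the dimension is at least $k$ --- so you are not missing a step the paper makes explicit; the clean way to see $\dim\mathcal{X}_0\le k$ is to note that any higher-dimensional component would have a cluster set at infinity of too large dimension relative to $\bigcap_{j\in I^c} H_j$, using Theorem~\ref{tmBH}.
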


\begin{proof}
 As $g_{c_0,a_0}(c_{j,0})>0$ for any $j\in I^c$, there exists $k_j\geq1$ such that $g_{c_0,a_0}(P_{c_0,a_0}^{k_j}(c_{j,0}))=d^{k_j}g_{c_0,a_0}(c_{j,0})>G(c_0,a_0)$. Let us set
$$\mathcal{X}_1:=\bigcap_{j\in I^c}\{(c,a)\in U_{I,\tau} \, | \ \psi_{c,a}(P_{c,a}^{k_j}(c_j))=\psi_{c_0,a_0}(P_{c_0,a_0}^{k_j}(c_{j,0}))\}~.$$
Then $\mathcal{X}_1$ is an analytic variety of dimension at least $k$. Up to taking an irreducible component of $\mathcal{X}_1$, we may assume that it is irreducible. Moreover, it is contained in
$$\mathcal{Y}:=\bigcap_{j\in I^c}\{(c,a)\in U_{I,\tau} \, | \ g_{c,a}(c_j)=g_{c_0,a_0}(c_{j,0})\}~.$$
The boundary of $\mathcal{Y}$ consists in parameters $(c,a)$ for which $G_I(c,a)=g_{c_0,a_0}(c_{j_{\tau(1)},0})>0$. In particular, $\partial\mathcal{X}_1$ consists in parameters for which $G_I(c,a)= g_{c,a}(c_{j_{\tau(1)}})>0$, hence
\begin{enumerate}
\item $\partial\mathcal{X}_1\subset\partial U_{I,\tau}$ and
\item $\{G_I=0\}\cap\mathcal{X}_1\Subset \mathcal{X}_1$.
\end{enumerate}
Let now $q\geq k$ be the integer given by Proposition \ref{prop:mu} at the parameter $(c_0,a_0)$ and let
$$\mathcal{X}_0:=\mathcal{X}_1\cap \{d\cdot G_I(c,a)<G(c_0,a_0)\}~.$$
Let finally $\pi:\mathcal{X}\to\mathcal{X}_0$ be a desingularization of $\mathcal{X}_0$. We denote by $P_\lambda$ the polynomial $P_{c,a}$ if $(c,a)=\pi(\lambda)$. Let also $c_i(\lambda):=c_i\circ\pi(\lambda)$. We also let $\lambda_0\in\mathcal{X}$ be such that $\pi(\lambda_0)=(c_0,a_0)$. Let us remark that $\mathcal{X}$ still sasitsfies properties $1$ and $2$ aforementioned and that $(dd^cG_I\circ\pi)^k$ is supported by the compact set $\partial\{G_I\circ\pi=0\}$. Let $K\Subset\mathcal{X}$ be a compact subset with $\{G_I\circ\pi=0\}\Subset \mathring{K}$. By the Chern-Levine-Nirenberg inequalities, there exists a constant $C>0$ such that
$$\|(dd^cG_I\circ\pi)^k\|\leq C\cdot \|G_I\circ\pi\|_{L^\infty(K)}^k<+\infty~,$$
According to \eqref{eq:DeM} and Proposition \ref{propTk}, since $\supp(dd^cg_{c,a}(c_j))\subset\{g_{c,a}(c_j)=0\}$, one has
$$(dd^cL)^k=k!\cdot(dd^cG_I)^k\text{ on }U_{I,\tau}~,$$
which concludes the proof.
\end{proof}

To prove Theorem~\ref{cor:TI}, we can apply Lemma~\ref{lm:X} and it is just left to prove that $(dd^cG_I\circ\pi)^k$ satisfies the assertions $3$ and $4$ of the Theorem. This is a consequence of the two next paragraphs.

\subsubsection{Second step: Decomposition of the current $\widehat{T}_{\mathcal{X}}$} 
We now want prove the following.

\begin{proposition}\label{prop:laminar}
Pick $(c,a)\in U_{I,\tau}\cap \{G_I=0\}$. Let $\mathcal{X}$ be given by Lemma~\ref{lm:X} and $\nu_{\ud}$ and $\ell$ be given by Proposition~\ref{prop:mu}. For any $\epsilon\in\Sigma_\ell$, there exists a closed positive $(1,1)$-current $\widehat{T}_\epsilon$ on $\mathcal{X}\times\C$ such that in the weak sense of currents on $\mathcal{X}\times\C$,
 $$\widehat{T}_{\mathcal{X}}=\int_{\Sigma_{\ell}}\widehat{T}_\epsilon\, d\nu_{\ud}(\epsilon)~.$$
\end{proposition}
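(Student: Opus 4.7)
The plan is to lift the fiberwise decomposition $\mu_\lambda = \int_{\Sigma_\ell}\mu_{\lambda,\epsilon}\, d\nu_{\ud}(\epsilon)$ from Proposition~\ref{prop:mu} to an integral decomposition of the fibered Green current $\widehat{T}_{\mathcal{X}}$ on $\mathcal{X}\times\C$, following closely Dujardin's argument in \cite[Theorem 3.12]{dujardin2}. The first step is to realize the combinatorics of Proposition~\ref{prop:mu} uniformly in $\lambda\in\mathcal{X}$. By construction of $\mathcal{X}_0$ in Lemma~\ref{lm:X}, the function $G$ is identically equal to $G(c_0,a_0)$ on $\mathcal{X}$ and $d\cdot G_I(\lambda)<G(c_0,a_0)$ throughout, so the open subsets
$$\widehat{U}_0 := \{(\lambda,z)\in\mathcal{X}\times\C\, :\, g_\lambda(z)<d\cdot G(c_0,a_0)\}\and \widehat{U}_1 := \widehat{P}^{-1}(\widehat{U}_0)$$
are well-defined; Lemma~\ref{lm:BuffHubbard1} and the Claim in the proof of Proposition~\ref{prop:mu} apply fiberwise, and the constancy of the positions of the critical points forces the $\ell=d-q$ disk components of the $\lambda$-fiber of $\widehat{U}_1$ to move holomorphically in $\lambda$. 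Up to passing to a finite \'etale cover of $\mathcal{X}$ to kill the permutation monodromy of the components carrying no critical points, I obtain open subsets $\widehat{V}_1,\ldots,\widehat{V}_\ell\subset\mathcal{X}\times\C$ with $\widehat{U}_1=\widehat{V}_1\sqcup\cdots\sqcup\widehat{V}_\ell$ and such that each $\widehat{P}\colon\widehat{V}_i\to\widehat{U}_0$ is a proper holomorphic branched cover of fibered degree $d_i$, with $d_1+\cdots+d_\ell=d$.

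I would then construct, for each finite word $\mathbf{i}=\epsilon_0\cdots\epsilon_{n-1}\in\{1,\ldots,\ell\}^n$, an explicit psh potential on $\mathcal{X}\times\C$. Choose a holomorphic section $\sigma(\lambda)=(\lambda,z_0)$ with $|z_0|$ large enough that $\sigma(\mathcal{X})\subset\widehat{U}_0\setminus\widehat{\K}$. The fibered preimage of $\sigma(\lambda)$ under $\widehat{P}|_{\widehat{V}_{\epsilon_{n-1}}}\circ\cdots\circ\widehat{P}|_{\widehat{V}_{\epsilon_0}}$ consists, with multiplicity, of $d_{\mathbf{i}}:=d_{\epsilon_0}\cdots d_{\epsilon_{n-1}}$ points depending holomorphically on $\lambda$, so is the zero locus of a unique monic polynomial $R_{\mathbf{i}}(\lambda,w)\in\mathcal{O}(\mathcal{X})[w]$ of degree $d_{\mathbf{i}}$ in $w$. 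The function
$$\widehat{g}_{\mathbf{i}}(\lambda,w) := \frac{1}{d_{\mathbf{i}}}\log|R_{\mathbf{i}}(\lambda,w)|$$
is psh and locally bounded on $\mathcal{X}\times\C$. Fiberwise, the argument of Proposition~\ref{prop:mu} gives $\widehat{g}_{\epsilon_{\leq n}}\to g_{\lambda,\epsilon}$ in $L^1_\textup{loc}(\C)$ for every $\epsilon\in\Sigma_\ell$; standard Hartogs-type upper envelope estimates promote this to convergence in $L^1_\textup{loc}(\mathcal{X}\times\C)$ to a psh function $\widehat{g}_\epsilon$, after which I set $\widehat{T}_\epsilon := dd^c\widehat{g}_\epsilon$.

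The decomposition will follow from the product identity
$$R_n(\lambda,w) := \prod_{P_\lambda^n(\eta)=z_0}(w-\eta) = \prod_{\mathbf{i}\in\{1,\ldots,\ell\}^n}R_{\mathbf{i}}(\lambda,w),$$
which, using $d_{\mathbf{i}}/d^n = \nu_{\ud}(\{\epsilon\in\Sigma_\ell\, :\, \epsilon_{\leq n}=\mathbf{i}\})$, rewrites as
$$\frac{1}{d^n}\log|R_n(\lambda,w)| = \int_{\Sigma_\ell}\widehat{g}_{\epsilon_{\leq n}}(\lambda,w)\, d\nu_{\ud}(\epsilon).$$
The classical fibered Brolin construction shows that the left-hand side tends to $g_\lambda(w)$ in $L^1_\textup{loc}(\mathcal{X}\times\C)$, while dominated convergence, justified by a uniform upper bound $\widehat{g}_{\epsilon_{\leq n}}\leq d^{-n}\log\max_K|R_n|$ on compact subsets $K\subset\mathcal{X}\times\C$, handles the right-hand side. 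Hence $g_\lambda(w) = \int_{\Sigma_\ell}\widehat{g}_\epsilon(\lambda,w)\, d\nu_{\ud}(\epsilon)$, and applying $dd^c_{\lambda,w}$, which commutes with the direct integral by Lemma~\ref{lm:admissible}, yields $\widehat{T}_{\mathcal{X}} = \int_{\Sigma_\ell}\widehat{T}_\epsilon\, d\nu_{\ud}(\epsilon)$.

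The main obstacle is the globalization of the fiberwise disk decomposition of $\widehat{U}_1$ over $\mathcal{X}$ and, relatedly, the monodromy of the labeling $\widehat{V}_1,\ldots,\widehat{V}_\ell$; passing to a finite \'etale cover of $\mathcal{X}$ should resolve this without altering the other conclusions of Theorem~\ref{cor:TI}. The second delicate point is the uniform $L^1_\textup{loc}$-domination needed to exchange $\lim_n$ and $\int_{\Sigma_\ell}$, which relies on the fiberwise inclusion $U_1(\lambda)\Subset U_0(\lambda)$ providing compactness estimates on the $\widehat{V}_i$ over compact subsets of $\mathcal{X}$.
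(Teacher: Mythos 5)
Your proposal is correct and follows essentially the same route as the paper's proof (Dujardin's strategy for lifting the fiberwise decomposition of $\mu_{c,a}$ to the fibered Green current $\widehat{T}_{\mathcal{X}}$). The only notable differences are cosmetic: the paper pulls back the moving graph $\Gamma_s$ of $s(\lambda)=c_{j_{\tau(1)}}(\lambda)$ rather than a constant section, and where you invoke ``Hartogs-type upper envelope estimates'' it instead extracts an $L^1_\textup{loc}$-convergent subsequence of the explicit potentials $u_{\epsilon,n}$ and uses the horizontal slicing Lemma~\ref{lm:horizontal} to identify the limit with $g_{\epsilon,\lambda}(z)$ up to a pluriharmonic function of $(\lambda,z)$, which is the precise mechanism behind the convergence you are gesturing at. As for your ``main obstacle'': the paper asserts that the labelling $V_1,\ldots,V_\ell$ is canonical and free of monodromy (the definition of $\mathcal{X}_0$ freezes the B\"ottcher data of high iterates of the escaping critical points and hence the angular combinatorics of the disk decomposition of $U_1$), so the \'etale cover you propose as a safety net is unnecessary, though, as you observe, it would also be harmless.
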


\begin{proof}
To begin, remark that the labelling $P_\lambda^{-1}(U_0)=V_1\cup\cdots\cup V_{\ell}$ introduced in the proof of Proposition \ref{prop:mu} does not depend on any choice. Moreover, according to the proof of Proposition \ref{prop:mu} and to the definition of $\mathcal{X}_0$, this decomposition persists in $\mathcal{X}$ and depends continously on the parameter $\lambda$.  We thus can define
$$\widehat P_i:\mathcal{X}\times V_i\longrightarrow\mathcal{X}\times U_0$$
by setting $\widehat P_i(\lambda,z)=(\lambda,P_{i,\lambda}(z))$. Let us also set $s(\lambda):=c_{j_{\tau(1)}}(\lambda)$. Let $R>0$ be big enough so that $U_{0,\lambda}\subset\D(0,R/2)$ for any $\lambda\in\mathcal{X}$. Such an $R$ exists by construction of $\mathcal{X}$ (Take for example $R=d^2G(c_0,a_0)$). Let $\ell\geq1$, $\ud\in(\mathbb{N}^*)^\ell$ and $\nu_{\ud}$ be given by Proposition \ref{prop:mu}. As we have seen in section \ref{sec:mu}, for any $\epsilon\in\Sigma_{\ell}$ and any $\lambda\in\mathcal{X}$, the sequence
\begin{eqnarray*}
\frac{1}{d_{\epsilon_0}\cdots d_{\epsilon_{n-1}}} P_{\epsilon_0,\lambda}^*\cdots P_{\epsilon_{n-1},\lambda}^*\delta_{s(\lambda)}=dd^c_z\left(\frac{1}{d_{\epsilon_0}\cdots d_{\epsilon_{n-1}}}\log\left|P_{\epsilon_{n-1},\lambda}\circ \cdots\circ P_{\epsilon_{0},\lambda}(z)-s(\lambda)\right|\right)
\end{eqnarray*}
converges to a measure $\mu_{\epsilon,\lambda}$ which has a $L^\infty_{\textup{loc}}$ logarithmic potential $g_{\epsilon,\lambda}$.

~

Let now $\Gamma_s$ be the graph of $s$, $\Gamma_s:=\{(\lambda,s(\lambda))\, ; \ \lambda\in\mathcal{X}\}$. We can write 
\begin{eqnarray}
\frac1{d^n}(\widehat P^*)^n[\Gamma_s]=\frac{1}{d^n}\sum_{\epsilon_i\in\{1,\ldots,\ell\}, \, i\leq n-1}\widehat P_{\epsilon_0}^*\cdots \widehat P_{\epsilon_n}^*[\Gamma_s]~.\label{eq:decomposition}
\end{eqnarray}
For $n\geq0$, one also can set
\begin{eqnarray*}
\widehat T_{\epsilon,n} & := & \frac{1}{d_{\epsilon_0}\cdots d_{\epsilon_{n-1}}}\widehat P_{\epsilon_0}^*\cdots\widehat P_{\epsilon_{n-1}}^*[\Gamma_{s}]\\
& = & dd^c_{\lambda,z}\left(\frac{1}{d_{\epsilon_0}\cdots d_{\epsilon_{n-1}}}\log\left|P_{\epsilon_{n-1},\lambda}\circ\cdots\circ P_{\epsilon_0,\lambda}(z)-s(\lambda)\right|\right)=dd^c_{\lambda,z}u_{\epsilon,n}~,
\end{eqnarray*}
where we have set
$$u_{\epsilon,n}(\lambda,z):=\frac{1}{d_{\epsilon_0}\cdots d_{\epsilon_{n-1}}}\log\left|P_{\epsilon_{n-1},\lambda}\circ\cdots\circ P_{\epsilon_0,\lambda}(z)-s(\lambda)\right|~.$$
It is obvious that the sequence $(u_{\epsilon,n})_{n\geq1}$ is locally uniformly bounded from above. According to Proposition \ref{prop:mu}, for any $\lambda\in\mathcal{X}$, the functions $u_{\epsilon,n}|_{\{\lambda\}\times\D(0,R)}$ converges in $L^1_\textup{loc}$ to a subharmonic function $\not\equiv-\infty$. Hence there exists a subsequence $(u_{\epsilon,n_k})$ which converges in $L^1_\textup{loc}(\mathcal{X}\times\D(0,R))$ to a psh function $u_{\epsilon,\infty}$. Let us remark that $\widehat T_{\epsilon,n_k}$ are all horizontal currents with supports contained in $\mathcal{X}\times\D(0,R/2)$. Making $k\to\infty$, we see that the current $\widehat T_{\epsilon,\infty}:=dd^cu_{\epsilon,\infty}$ is horizontal. According to Lemma \ref{lm:horizontal}, one can write
\begin{eqnarray}
u_{\epsilon,\infty}(\lambda,z)=\int_{\D(0,R)}\log|z-t|\, d\mu_{\epsilon,\lambda}(t)+h(\lambda,z)=g_{\epsilon,\lambda}(z)+h(\lambda,z)~,\label{eq:slice}
\end{eqnarray}
where $h$ is pluriharmonic on $\mathcal{X}\times \D(0,R)$ and $g_{\epsilon,\lambda}(z)$ is the logarithmic potential of $\mu_{\epsilon,\lambda}$.

In particular, the function $(\lambda,z)\mapsto g_{\epsilon,\lambda}(z)$ is psh on $\mathcal{X}\times \D(0,R)$ and the sequence $\widehat T_{\epsilon,n}$ converges in the weak sense of currents to $\widehat T_\epsilon:=dd^c_{\lambda,z}g_{\epsilon,\lambda}(z)$.

~

Recall that $\widehat{T}_{\mathcal{X}}=dd^c_{\lambda,z}g_\lambda(z)$. Again, we follow the argument of Dujardin \cite{dujardin2}: As $g_\lambda(s(\lambda))>0$, $s(\lambda)$ escapes under iteration and the sequence $\frac1{d^n}(\widehat P^*)^n[\Gamma_s]$ converges to $\widehat{T}_{\mathcal{X}}$ as $n\to\infty$. The decomposition \eqref{eq:decomposition} then guarantees that $\widehat{T}_{\mathcal{X}}=\int_{\Sigma_{\ell}}\widehat{T}_\epsilon\, d\nu_{\ud}(\epsilon)$.
\end{proof}

\subsubsection{Third step: Decomposition of the bifurcation currents of $\mathcal{X}$} 

The aim here is to prove that the bifurcation currents associated with critical points and the bifurcation measure are close to be laminar in $\mathcal{X}$. Our precise result can be stated as follows.

\begin{theorem}\label{tm:laminar}
Pick $(c,a)\in U_{I,\tau}\cap \{G_I=0\}$. Let $\mathcal{X}$ be given by Lemma~\ref{lm:X} and $\nu_{\ud}$ and $\ell$ be given by Proposition~\ref{prop:mu}. Write $I=(i_1,\ldots,i_k)$ . Then, for any $1\leq j\leq k$, and any $\epsilon\in\Sigma_\ell$, there exists a closed positive $(1,1)$-current $T_{\epsilon,j}$ such that
\[dd^cg_\lambda(c_{i_j}(\lambda))=\int_{\Sigma_\ell}T_{\epsilon,j}\, d\nu_{\ud}(\epsilon)~.\]
\end{theorem}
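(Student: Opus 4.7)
The plan is to slice the decomposition of $\widehat{T}_{\mathcal{X}}$ from Proposition~\ref{prop:laminar} against the graph of each marked critical point $c_{i_j}$ and then push forward to $\mathcal{X}$. Fix $1\leq j\leq k$, set $c(\lambda):=c_{i_j}(\lambda)$, and write $\mathcal{C}_j:=\{(\lambda,c(\lambda))\,:\,\lambda\in\mathcal{X}\}\subset\mathcal{X}\times\C$ for its graph and $p_1:\mathcal{X}\times\C\to\mathcal{X}$ for the first projection. As noted earlier in the treatment of the fibered Green current, one has
$$dd^cg_\lambda(c(\lambda))=(p_1)_*\bigl(\widehat{T}_{\mathcal{X}}\wedge[\mathcal{C}_j]\bigr),$$
and this wedge product is admissible because $\widehat{T}_{\mathcal{X}}$ admits the continuous plurisubharmonic potential $(\lambda,z)\mapsto g_\lambda(z)$, hence locally integrable against the trace measure of the smooth analytic hypersurface $[\mathcal{C}_j]$.

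The heart of the argument is to propagate Proposition~\ref{prop:laminar} through this slicing. Writing $\widehat{T}_{\mathcal{X}}=\int_{\Sigma_\ell}\widehat{T}_\epsilon\,d\nu_{\ud}(\epsilon)$ as a direct integral of horizontal $(1,1)$-currents in $\mathcal{X}\times\C$, Lemma~\ref{lm:admissible} applies with $T=\widehat{T}_{\mathcal{X}}$ and $S=[\mathcal{C}_j]$: it yields that for $\nu_{\ud}$-almost every $\epsilon\in\Sigma_\ell$ the product $\widehat{T}_\epsilon\wedge[\mathcal{C}_j]$ is admissible, and that
$$\widehat{T}_{\mathcal{X}}\wedge[\mathcal{C}_j]=\int_{\Sigma_\ell}\bigl(\widehat{T}_\epsilon\wedge[\mathcal{C}_j]\bigr)\,d\nu_{\ud}(\epsilon)$$
as currents on $\mathcal{X}\times\C$. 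Because the restriction of $p_1$ to $\mathcal{C}_j$ is a biholomorphism onto $\mathcal{X}$, the pushforward $T_{\epsilon,j}:=(p_1)_*\bigl(\widehat{T}_\epsilon\wedge[\mathcal{C}_j]\bigr)$ is a well-defined closed positive $(1,1)$-current on $\mathcal{X}$. Since pushforward commutes with the direct integral (by Fubini, after pairing both sides with an arbitrary $(k-1,k-1)$ test form $\varphi$ on $\mathcal{X}$), we obtain
$$dd^cg_\lambda(c(\lambda))=\int_{\Sigma_\ell}T_{\epsilon,j}\,d\nu_{\ud}(\epsilon),$$
which is the desired identity.

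The step requiring the most care is the verification of the hypotheses of Lemma~\ref{lm:admissible}: the admissibility of $\widehat{T}_{\mathcal{X}}\wedge[\mathcal{C}_j]$ locally on $\mathcal{X}\times\C$ (which follows from local boundedness of $g_\lambda(z)$) and the $\nu_{\ud}$-integrability of $\epsilon\mapsto\|\widehat{T}_\epsilon\|_\Omega$ on relatively compact $\Omega\subset\mathcal{X}\times\C$. The latter is inherited from the finite local mass of $\widehat{T}_{\mathcal{X}}$, together with the horizontality of each $\widehat{T}_\epsilon$ and the uniform local upper bound on the psh potentials $u_{\epsilon,n}$ produced in the proof of Proposition~\ref{prop:laminar}. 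Once these integrability and admissibility conditions are in place, the proof reduces to a formal manipulation of the direct-integral decomposition, and the statement of the theorem follows at once.
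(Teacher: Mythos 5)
Your proof is correct and follows the same route as the paper: decompose $\widehat{T}_{\mathcal{X}}=\int_{\Sigma_\ell}\widehat{T}_\epsilon\,d\nu_{\ud}(\epsilon)$ via Proposition~\ref{prop:laminar}, observe that $\widehat{T}_{\mathcal{X}}\wedge[\mathcal{C}_{i_j}]$ is admissible since $\widehat{T}_{\mathcal{X}}$ has a continuous potential, apply Lemma~\ref{lm:admissible} to slice the direct integral, and push forward to $\mathcal{X}$, setting $T_{\epsilon,j}:=0$ on the $\nu_{\ud}$-null set where the product fails to be admissible. (As a minor aside, you correctly write the pushforward as $(p_1)_*$, matching the formula $T_i=(p_1)_*(\widehat{T}_{\mathcal{X}}\wedge[\mathcal{C}_i])$ from the preliminaries; the paper's proof writes $(p_2)_*$, which is a typo.)
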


\begin{proof}
Let $p_1:\mathcal{X}\times\C\to\mathcal{X}$ and $p_2:\mathcal{X}\times\C\to\C$ stand for the canonical projections. By Proposition~\ref{prop:laminar}, one can write $\widehat{T}_{\mathcal{X}}=\int_{\Sigma_{\ell}}\widehat{T}_\epsilon\, d\nu_{\ud}(\epsilon)$. As $\widehat{T}_{\mathcal{X}}$ has a continuous potential, $\widehat{T}_{\mathcal{X}}\wedge[\mathcal{C}_{i_j}]$ is admissible. According to Lemma \ref{lm:admissible}, $\widehat{T}_\epsilon\wedge[\mathcal{C}_{i_j}]$ is admissible for $\nu_{\ud}$-a.e. $\epsilon\in\Sigma_{\ell}$ and one can write
$$dd^cg_\lambda(c_{i_j}(\lambda))=(p_2)_*\left(\widehat{T}_{\mathcal{X}}\wedge [\mathcal{C}_{i_j}]\right)=\int_{\Sigma_{\ell}}(p_2)_*\left(\widehat{T}_\epsilon\wedge [\mathcal{C}_{i_j}]\right)\, d\nu_{\ud}(\epsilon)~.$$
Let us set $T_{\epsilon,j}:=(p_2)_*\left(\widehat{T}_\epsilon\wedge [\mathcal{C}_{i_j}]\right)$, as soon as this product is admissible and $T_{\epsilon,j}:=0$ otherwise. This concludes the proof.
\end{proof}

We are now in postion to prove Theorem \ref{cor:TI}.

\begin{proof}[Proof of Theorem \ref{cor:TI}]
First, notice that Lemma~\ref{lm:X} gives $\mathcal{X}_0$, $\mathcal{X}$ and $\pi$ satisfying properties $1$ and $2$. When $k=1$, item $3$ follows easily from Theorem~\ref{tm:laminar}. Indeed, in that case, $I=i_1$ and by construction of $\mathcal{X}$, for any $j\neq i_1$, the critical point $c_j$ is stable (they escape on te whole family). In particular, one has
\[dd^cL\circ\pi=dd^cg_\lambda(c_{i_1}(\lambda))=\int_{\Sigma_\ell}T_{\epsilon,1}\, d\nu_{\ud}(\epsilon)~.\]

We thus assume that $k\geq2$. We want to prove item $3$. For the sake of simplicity, write $\Sigma=\Sigma_{\ell}$ and $\nu=\nu_{\ud}$. Again, as the functions $g_\lambda(c_{i_1}(\lambda)),\ldots,g_\lambda(c_{i_k}(\lambda))$ are continuous, for any $1\leq m\leq k$, the wedge product $dd^cg_\lambda(c_{i_1}(\lambda))\wedge\cdots\wedge dd^cg_\lambda(c_{i_m}(\lambda))$ is admissible. By an easy induction, according to Lemma \ref{lm:admissible} and to Fubini's Theorem, for any $1\leq m\leq k$  the product $T_{\epsilon_1,1}\wedge\cdots\wedge T_{\epsilon_m,m}$ is admissible for $\nu^{\otimes m}$-a.e. $\epsilon=(\epsilon_1,\ldots,\epsilon_m)$ and,
\begin{eqnarray*}
\bigwedge_{j=1}^kdd^cg_\lambda(c_{i_j}(\lambda)) & = & \int_{\Sigma}\left(T_{\epsilon,1}\wedge\bigwedge_{j=2}^kdd^cg_\lambda(c_{i_j}(\lambda))\right)d\nu(\epsilon)\\
& = & \int_{\Sigma}\left(T_{\epsilon_1,1}\wedge\int_{\Sigma}\left(T_{\epsilon_2,2}\wedge\bigwedge_{j=3}^kdd^cg_\lambda(c_{i_j}(\lambda))\right)d\nu(\epsilon_2)\right)d\nu(\epsilon_1)\\
& = & \int_{\Sigma^2}\left(T_{\epsilon_1,1}\wedge T_{\epsilon_2,2}\wedge\bigwedge_{j=3}^kdd^cg_\lambda(c_{i_j}(\lambda))\right)d\nu(\epsilon_2)d\nu(\epsilon_1)\\
& & \\
& \vdots & \\
& = & \int_{\Sigma^k}\left(T_{\epsilon_1,1}\wedge\cdots\wedge T_{\epsilon_k,k}\right)d\nu(\epsilon_1)\cdots d\nu(\epsilon_k)~.
\end{eqnarray*}
By Proposition \ref{propTk}, this yields item $3$, letting $T_{\epsilon_1,1}\wedge \cdots \wedge T_{\epsilon_k,k}:=0$ if it is not admissible.

~

\par Let us now prove item $4$. When $\widehat{T}_{\epsilon}\wedge[\mathcal{C}_{i_j}]$ is admissible, its support is included in
$$\{(\lambda,z)\in\mathcal{X}\times\D(0,R) \, ; \ z\in\K_{\epsilon,\lambda}\}\cap\mathcal{C}_{i_j}=\{(\lambda,z)\in\mathcal{X}\times\C \, ; \ c_{i_j}(\lambda)\in\K_{\epsilon,\lambda}\}~.$$
As a consequence, $\supp(T_{\epsilon,j})\subset\{\lambda\in\mathcal{X} \, ; \ c_{i_j}(\lambda)\in\K_{\epsilon,\lambda}\}$. Let $\mathcal{U}$ be a connected component of the interior of $\{G_I\circ\pi=0\}$. Then $\mathcal{U}$ is a stable component, i.e. the sequences $\{\lambda\mapsto P_\lambda^n(c_{i_j}(\lambda))\}_{n\geq1}$ are normal families in $\mathcal{U}$ as families of holomorphic functions of the parameter, for $1\leq j\leq k$. This implies, for all $1\leq j\leq k$, the existence of $\epsilon_{0,j}\in\Sigma_{d-q}$ such that $c_{i_j}(\lambda)\in\K_{\epsilon_{0,j},\lambda}$ for any $\lambda\in\overline{\mathcal{U}}$, since otherwise the orbit of $c_{i_j}(\lambda)$ would have to lie in the attracting basin of $\infty$ for some $\lambda\in\overline{\mathcal{U}}$, contradicting our assumption that $\mathcal{U}\subset\{G_I\circ\pi=0\}$. Hence
\begin{eqnarray*}
\left\langle(dd^cG_I\circ\pi)^k,\mathbf{1}_{\overline{\mathcal{U}}}\right\rangle & = & \int_{\Sigma^k}\left\langle \bigwedge_{j=1}^kT_{\epsilon_j,j},\mathbf{1}_{\overline{\mathcal{U}}}\right\rangle\,d\nu^{\otimes k}(\epsilon)\\
& \leq & \|T_{\epsilon_{0,1},1}\wedge \cdots\wedge T_{\epsilon_{0,k},k}\|_{\overline{\mathcal{U}}}\cdot\prod_{j=1}^k\nu(\{\epsilon_{0,j}\})=0~,
\end{eqnarray*}
which concludes the proof.
\end{proof}

\section{The bifurcation measure does not charge boundary components}
In the present Section, we focus on the proof of Theorem \ref{lmboundary}. Namely, we prove that, as in the quadratic family, given any connected component $U$ of the interior of the connectedness locus $\mathcal{C}_d$, the bifurcation measure doesn't give mass to the boundary of $U$. The proof of Theorem \ref{lmboundary} uses the continuity of the Julia set at some specific parameters due to Douady \cite{Douady}, convergence of invariant line fields established by McMullen \cite{McMullen}, as well as a precise dynamical description of $\mu_\bif$-a.e. polynomial due to Dujardin and Favre \cite{favredujardin}.

\subsection{Invariant line fields and the Caratheodory topology}
For the material of the present section, we refer to \cite{McMullen}.

\begin{definition}
Let $U\subset\C$ be an open set. A \emph{measurable line field} on a Borel set of positive area $E\subset U$ is a Beltrami coefficient
$$\nu=\nu(z)\frac{d\bar z}{d z}$$
where $\nu(z)$ is a measurable map on $U$ with $|\nu(z)|=1$ if $z\in E$ and $\nu(z)=0$ otherwise. Let $V\subset\C$ be another open set. We say that the line field $\nu$ is \emph{invariant} by a holomorphic map $f:U\to V$, or $f$-\emph{invariant}, if $f^*\nu=\nu$ on $U\cap V$.
\end{definition}

\medskip

Let us consider a sequence $(V_n,x_n)$ of pointed topological disks of $\p^1$. We say that $(V_n,x_n)$ converges to $(V,x)$ in the Caratheodory topology if
\begin{enumerate}
\item $x_n\to x$ as $n\to\infty$,
\item for all compact set $K\subset V$, there exists $N\geq1$ such that $K\subset V_n$ for all $n\geq N$,
\item for any open set $U\subset\p^1$ containing $x$, if there exists $N\geq1$ such that $U\subset V_n$ for all $n\geq N$, then $U\subset V$.
\end{enumerate}
If $(U_n,x_n)\to(U,x)$ and $(V_n,y_n)\to(V,y)$ in the Caratheodory topology and if $f_n:U_n\to V_n$ is a sequence of holomorphic maps satisfying $f_n(x_n)=y_n$ whic converges uniformly on compact subsets of $U$ to $f:U\to V$ holomorphic with $f(x)=y$, we say that $f_n:(U_n,x_n)\to(V_n,y_n)$ \emph{converges in the Carath\'eodory topology} to $f:(U,x)\to(V,y)$.

~

\par Recall the following definition (see \cite[\S 5.6]{McMullen}).
\begin{definition}
We say that a sequence $\nu_n\in L^\infty(V,\C)$ \emph{converges in measure} to $\nu\in L^{\infty}(V,\C)$ on $V$ if for all compact $K\Subset V$ and all $\varepsilon>0$,
$$
\lim_{n\to\infty}\textup{Area}\left(\{z\in K \ ; \ |\nu_n(z)-\nu(z)|>\varepsilon\}\right)=0~.$$
\end{definition}
\noindent According to \cite[Proposition 2.37.3]{wagschal}, a bounded sequence $\nu_n\in L^\infty(\C,\C)$ admits a subsequence which converges in measure if and only if it is a Cauchy sequence in measure, i.e. for any compact $K\Subset\C$ and for any $\delta,\epsilon>0$, there exists $n\geq1$ such that
$$\textup{Area}\left(\{ z\in K\ : \ |\nu_p(z)-\nu_{q}(z)|>\delta\}\right)\leq \epsilon~,$$
for any $p,q\geq n$. 

~

In what follows, we shall use the following result of McMullen (see \cite[Theorem 5.14]{McMullen}).
\begin{theorem}[McMullen]\label{tm:lf}
Let $f_n:(U_n,x_n)\to(V_n,y_n)$ be a sequence of non-constant holomorphic maps between disks. Assume that $f_n$ converges in the Caratheodory topology to a non-constant holomorphic map $f:(U,x)\to(V,y)$ . Assume in addition that there exists a measurable $f_n$-invariant line field $\nu_n$ which converges in measure to $\nu$ on $V$. Then $\nu$ is a measurable $f$-invariant line field.
\end{theorem}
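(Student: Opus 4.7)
The plan is to pass to the limit, in a measure-theoretic sense, in the identity expressing the $f_n$-invariance of $\nu_n$, namely
\[\nu_n(f_n(z))\cdot\overline{f_n'(z)}/f_n'(z)=\nu_n(z)\quad\textup{(a.e.~$z$)},\]
and obtain the corresponding identity for $f$ and $\nu$. By extracting a subsequence I may assume that $\nu_n\to\nu$ pointwise almost everywhere on any chosen compact subset of $V$. The work is then local on a compact set $K\Subset U\setminus C(f)$ avoiding the critical set $C(f)$ of $f$, which is discrete by the non-constancy of $f$.

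On such a $K$, one has $|f'|\geq c>0$ for some constant $c$, and the Cauchy estimates give $f_n\to f$ and $f_n'\to f'$ uniformly on $K$. Therefore, for $n$ large, $|f_n'|\geq c/2$, the image $f_n(K)$ lies in a fixed compact subset of $V$, $\overline{f_n'}/f_n'\to\overline{f'}/f'$ uniformly on $K$, and, after shrinking $K$ if necessary, $f_n|_K$ has bounded multiplicity $M$ uniformly in $n$. The crucial step is then to prove that $\nu_n\circ f_n\to\nu\circ f$ in measure on $K$. I split this difference into $(\nu_n\circ f_n-\nu\circ f_n)+(\nu\circ f_n-\nu\circ f)$. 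The first part is controlled by a change of variables: using $|f_n'|^2\geq c^2/4$ and the multiplicity bound $M$,
\[\textup{Area}\{z\in K:|\nu_n(f_n(z))-\nu(f_n(z))|>\epsilon\}\leq\frac{4M}{c^2}\,\textup{Area}\{w\in f_n(K):|\nu_n(w)-\nu(w)|>\epsilon\},\]
which tends to $0$ by convergence in measure of $\nu_n\to\nu$ on $V$. For the second part, I invoke Lusin's theorem: given $\delta>0$, there is a continuous $\tilde\nu$ with $|\tilde\nu|\leq 1$, equal to $\nu$ outside a set of area at most $\delta$ in a neighborhood of $f(K)$; then $\tilde\nu\circ f_n\to\tilde\nu\circ f$ uniformly on $K$ by uniform convergence $f_n\to f$, while the exceptional sets where $\nu\circ f_n\neq\tilde\nu\circ f_n$ and $\nu\circ f\neq\tilde\nu\circ f$ are again controlled by the same change of variables estimate applied to $\delta$.

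Combining these two convergences with the uniform convergence of $\overline{f_n'}/f_n'$, I obtain $f_n^*\nu_n\to f^*\nu$ in measure on $K$; since also $\nu_n\to\nu$ in measure on $K$, the identity $f_n^*\nu_n=\nu_n$ passes to the limit to give $f^*\nu=\nu$ almost everywhere on $K$. Exhausting $U\setminus C(f)$ by such compacts, and noting that $C(f)$ has zero Lebesgue measure, yields the $f$-invariance of $\nu$ as a measurable line field. The main obstacle is the treatment of the composition $\nu_n\circ f_n$: this is what forces the localization away from $C(f)$, for on such a compact one needs a uniform lower bound on $|f_n'|$ to pull back sets of small measure through $f_n$ and exploit the convergence in measure given on $V$.
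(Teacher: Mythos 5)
The paper does not give a proof of this statement---it is quoted directly from McMullen's book (Theorem 5.14 there)---so there is no in-paper argument to compare against. Your reconstruction is correct and follows what is essentially the standard argument: localize on a compact $K$ away from the critical set $C(f)$, so that $|f_n'|$ is bounded below and $f_n|_K$ has uniformly bounded multiplicity, then propagate convergence in measure through the composition by a change-of-variables Jacobian estimate (for $\nu_n\circ f_n$ versus $\nu\circ f_n$) together with Lusin's theorem (for $\nu\circ f_n$ versus $\nu\circ f$). Three small remarks. First, $K$ should be taken inside $(U\cap V)\setminus C(f)$, not merely $U\setminus C(f)$: you need $K\subset V$ to invoke the hypothesis that $\nu_n\to\nu$ in measure on $V$, both for the first split term and for the final comparison $\nu_n\to\nu$ on $K$, and you need $K\subset U$ for $f_n$ to be defined there. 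In the paper's application one has $U\subset V$, so this is automatic, but it should be said. Second, the a.e.-convergent subsequence you extract at the outset is never used; the whole argument runs with convergence in measure, and uniqueness of limits in measure gives $f^*\nu=\nu$ a.e.\ at the end. Third, you establish the invariance relation $f^*\nu=\nu$; note that the paper's definition of a ``measurable line field'' additionally requires $\nu$ to be supported on a set of positive area, a property that a limit in measure of line fields need not inherit, so the theorem must be read as asserting only the invariance relation---which is exactly what your argument proves.
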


As a consequence, we immediately have $\textup{Area}(\supp(\nu))>0$.

\subsection{Some pathologic filled-in Julia sets of positive area}
In the present section, we aim at proving that, for polynomials belonging to the boundary of queer components where $\K_{c,a}=\J_{c,a}$, the filled-in Julia set has positive area. Precisely, we prove the following.

\begin{theorem}
Let $\mathcal{U}\subset\C^{d-1}$ be a connected component of the interior of $\mathcal{C}_d$. Assume that there exists a parameter $(c,a)\in \mathcal{U}$ such that $P_{c,a}$ has only repelling cycles and let $(c_0,a_0)\in\partial \mathcal{U}$. Then, either $\textup{Area}(\J_{c_0,a_0})>0$, or $\K_{c_0,a_0}$ has non-empty interior.\label{tm:dim}
\end{theorem}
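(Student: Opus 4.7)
The plan is to prove the contrapositive: assuming $\mathring{\K}_{c_0,a_0}=\varnothing$ (so that $\K_{c_0,a_0}=\J_{c_0,a_0}$), I show $\textup{Area}(\J_{c_0,a_0})>0$. The strategy combines McMullen's construction of invariant line fields in queer stable components with a passage to the limit via Theorem~\ref{tm:lf}, together with upper semi-continuity of $\lambda\mapsto\K_\lambda$. First, I identify $\mathcal{U}$ as a queer component of $\textup{int}(\mathcal{C}_d)$: attracting cycles persist under holomorphic perturbation, so the existence of a parameter $(c_\star,a_\star)\in\mathcal{U}$ with only repelling cycles forbids any attracting cycle throughout $\mathcal{U}$. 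Moreover, $(c_\star,a_\star)$ is $\J$-stable by MSS, and so lies in a connected open stability component $V\subset\mathcal{U}$ on which every polynomial still has only repelling cycles (any new neutral cycle would break stability); in particular $\K_\lambda=\J_\lambda$ for every $\lambda\in V$.

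By McMullen's theorem on non-hyperbolic stable components, each $P_\lambda$ with $\lambda\in V$ carries a measurable invariant Beltrami form $\nu_\lambda$ with $|\nu_\lambda|\equiv 1$ on $\J_\lambda$ and $\nu_\lambda\equiv 0$ elsewhere; in particular $\textup{Area}(\J_\lambda)>0$. I then pick a sequence $\lambda_n\in V$ with $\lambda_n\to(c_0,a_0)$ (if $(c_0,a_0)\notin\overline{V}$ one argues analogously after chaining through stability components of $\mathcal{U}$, each of which is queer by Step 1) and set $\nu_n\pe\nu_{\lambda_n}$. The family $(\nu_n)$ is uniformly bounded in $L^\infty(\C)$; by the Cauchy-in-measure criterion (Proposition~2.37.3 of Wagschal, cited in the excerpt) we extract a subsequence converging in measure to some measurable Beltrami form $\nu_\infty$. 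Since $P_{\lambda_n}\to P_{(c_0,a_0)}$ locally uniformly on $\C$, Theorem~\ref{tm:lf} applied in Carathéodory coordinates on any fixed disk shows that $\nu_\infty$ is a $P_{(c_0,a_0)}$-invariant measurable line field. Upper semi-continuity of $\lambda\mapsto\K_\lambda$ yields $\supp(\nu_\infty)\subset\K_{(c_0,a_0)}$, and under the standing assumption $\mathring{\K}_{(c_0,a_0)}=\varnothing$ this forces $\supp(\nu_\infty)\subset\J_{(c_0,a_0)}$.

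The main obstacle is guaranteeing $\nu_\infty\not\equiv 0$, for only then does one get $\textup{Area}(\J_{(c_0,a_0)})\geq\textup{Area}(\supp\nu_\infty)>0$, yielding the desired conclusion. Since $|\nu_n|=\mathbf{1}_{\J_{\lambda_n}}$, non-triviality of $\nu_\infty$ is equivalent to a uniform lower bound $\textup{Area}(\J_{\lambda_n}\cap B_0)\geq\delta>0$ on some fixed ball $B_0$. Within $V$, Slodkowski's extension of the holomorphic motion of $\J_\lambda$ to a motion of $\C$ provides $K_\lambda$-quasiconformal conjugacies between $P_{(c_\star,a_\star)}$ and $P_\lambda$ with $K_\lambda$ uniformly bounded on compact subsets of $V$; since $K$-quasiconformal maps distort area by at most $K^2$, this delivers the required lower bound as long as $\lambda_n$ stays in a compact part of $V$. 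The delicate case is when $\lambda_n\to\partial V\cap\partial\mathcal{U}$, where $K_\lambda$ may blow up; here the rigidity coming from $\mathring{\K}_{(c_0,a_0)}=\varnothing$ is combined with Douady's continuity of $\lambda\mapsto\J_\lambda$ at parameters whose filled-in Julia set has empty interior (and, when necessary, with the Dujardin-Favre dynamical description from Theorem~\ref{tm:mupp}) to show the motion extends with controlled dilatation. This boundary control is the genuine technical heart of the proof.
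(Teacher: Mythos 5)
Your proposal shares the same cast of characters as the paper's proof (queer components, MSS invariant line fields, B\"ottcher coordinates and Carath\'eodory convergence, McMullen's Theorem~\ref{tm:lf}), but it makes a logical choice that creates two genuine gaps. You argue directly: assume $\mathring{\K}_{c_0,a_0}=\varnothing$ and try to show $\textup{Area}(\J_{c_0,a_0})>0$. The paper instead argues by contradiction, assuming \emph{both} $\mathring{\K}_{c_0,a_0}=\varnothing$ \emph{and} $\textup{Area}(\J_{c_0,a_0})=0$. This extra hypothesis is not cosmetic; it is precisely what drives the two steps your proposal cannot finish. First, Douady's continuity of $\lambda\mapsto\J_\lambda$ at $(c_0,a_0)$ (which needs $\mathring{\K}_{c_0,a_0}=\varnothing$) together with $\textup{Area}(\J_{c_0,a_0})=0$ forces $\textup{Area}(\J_{c_n,a_n})\to 0$, and since $\supp(\nu_{c_n,a_n})\subset\J_{c_n,a_n}$ this makes the sequence of line fields Cauchy in measure \emph{for free}. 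Second, that same computation forces the limit of the line fields to be zero, while McMullen's theorem together with the Carath\'eodory convergence of the B\"ottcher charts forces the limit to be an invariant line field carried by a set of positive area --- a contradiction. No separate non-vanishing argument is needed.

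In your direct version, both of those steps are gaps. You extract a subsequence of $(\nu_n)$ converging in measure ``by the Cauchy-in-measure criterion (Proposition 2.37.3 of Wagschal)'', but that criterion is an equivalence: a bounded $L^\infty$ sequence has a subsequence convergent in measure \emph{if and only if} it is Cauchy in measure. Uniform boundedness alone gives nothing in the topology of convergence in measure (think of $\nu_n(z)=\textup{sgn}\,\sin(nx)$), so you still have to verify Cauchy-in-measure, which you do not. Then, even granting a convergent subsequence, you correctly identify $\nu_\infty\neq 0$ as ``the genuine technical heart'', and your proposed route via Slodkowski and the $K^2$ area-distortion bound collapses exactly when $\lambda_n\to\partial\mathcal{U}$, because the dilatation can blow up; the sentence about ``rigidity combined with Douady's continuity'' and ``controlled dilatation'' is not an argument, it is a placeholder. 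You should switch to arguing by contradiction as the paper does: the contradiction hypothesis supplies both the Cauchy-in-measure property and the non-triviality obstruction simultaneously, and the quasiconformal dilatation never needs to be controlled.
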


\begin{proof}
As there exists $(c,a)\in \mathcal{U}$ such that $P_{c,a}$ has only repelling cycles, one has $\J_{c,a}=\K_{c,a}$. Moreover, as $\mathcal{U}\subset\mathcal{C}_d$, it is a stable component. This implies that $\J_{c,a}=\K_{c,a}$ for all $(c,a)\in \mathcal{U}$. In particular, $\mathcal{U}$ is not a hyperbolic component. By \cite[Theorem E]{MSS}, for any $(c,a)\in \mathcal{U}$, there exists a $P_{c,a}$-invariant line field $\nu_{c,a}$ which is supported on the Julia set $\J_{c,a}$ of $P_{c,a}$, i.e. $\nu_{c,a}\in L^\infty(\C,\C)$ satisfies $P_{c,a}^*\nu_{c,a}=\nu_{c,a}$ and there exists a Borel set $E_{c,a}\subset\J_{c,a}$ of positive area such that $|\nu_{c,a}(z)|=1$ for all $z\in E_{c,a}$, and $\nu_{c,a}(z)=0$ for all 
$z\notin E_{c,a}$.

~

\par Let us briefly recall how, in the present case, one can build this invariant line field. Let $(c_1,a_1)\in \mathcal{U}$ be a base point that we have chosen and let $\psi_{c,a}$ stand for the B\"ottcher coordinate of $\infty$ of $P_{c,a}$. The family of analytic maps
$$\phi_{c,a}(z):=\psi^{-1}_{c,a}\circ \psi_{c_1,a_1}(z), \ z\in \C\setminus\J_{c_1,v_1}~,$$
defines a conformal holomorphic motion $\mathcal{U}\times\left(\C\setminus\J_{c_1,a_1}\right)\to\C$ which satisfies
$$ \phi_{c,a}\circ P_{c_1,a_1}(z)=\psi_{c,a}^{-1}\circ \psi_{c_1,a_1}(P_{c_1,a_1}(z))=\psi_{c,a}^{-1}( \psi_{c_1,a_1}(z)^d)=P_{c,a}\circ \phi_{c,a}(z)~.$$
By the $\lambda$-Lemma, it extends as a quasiconformal holomorphic motion $\phi:\mathcal{U}\times\p^1\to\p^1$ such that $\phi_{c,a}$ conjugates $P_{c_1,a_1}$ to $P_{c,a}$ on $\C$. Let $\mu_{c,a}$ be the Beltrami form on $\C$ satisfying
$$\overline{\partial} \phi_{c,a}^{-1}=\mu_{c,a}\circ\partial \phi_{c,a}^{-1}$$
almost everywhere on $\C$. Then $\supp(\mu_{c,a})\subset\J_{c,a}$. If $\textup{Area}(\supp(\mu_{c_2,a_2}))=0$ for some $(c_2,a_2)\in\mathcal{U}$, it would also be the case for all $(c,a)\in\mathcal{U}$. By the above construction, the maps $\phi_{c,a}$ would be a quasi-conformal homeomorphism which is holomorphic almost everywhere, i.e. $\phi_{c,a}\in\textup{Aut}(\C)$. This contradicts the fact that the family $(P_{c,a})_{(c,a)\in\C^{d-1}}$ is a finite ramified cover of the moduli space $\mathcal{P}_d$. Hence the Beltrami form defined by
$$\nu_{c,a}:=\left\{\begin{array}{cl}
\frac{\mu_{c,a}(z)}{|\mu_{c,a}(z)|}\cdot\frac{d\bar z}{dz}& \text{if }\mu_{c,a}(z)\neq0~,\\
0 & \text{otherwise}
\end{array}
\right.$$
defines an invariant line field for $P_{c,a}$.

~

Let us now proceed by contradiction, assuming that, for some $(c_0,a_0)\in\partial \mathcal{U}$, one has $\J_{c_0,a_0}=\K_{c_0,a_0}$ and  $\K_{c_0,a_0}$ has Lebesgue measure zero. According to \cite[Corollaire 5.2]{Douady}, the map $(c,a)\mapsto\J_{c,a}$ is continuous at $(c_0,a_0)$. By \cite[Corollary 6.5.2]{ransford}, for any $(c,a)\in\mathcal{C}_d$, the compact set $\K_{c,a}$ contains $0$ and
$$\K_{c,a}\subset\overline{\D(0,4\sqrt[d-1]{d})}~.$$
By Montel's Theorem, the family $(\psi_{c,a}^{-1})_{(c,a)\in \mathcal{U}}$ is a normal family and, for all $z\in \C\setminus\D(0,4d)$, one has $\lim_{(c,a)\to(c_0,a_0)}\psi_{c,a}^{-1}(z)=\psi_{c_0,a_0}^{-1}(z)$, where $\psi_{c_0,a_0}$ is the B\"ottcher coordinate at $\infty$ of $P_{c_0,a_0}$. In particular, the family $(\psi^{-1}_{c,a})_{(c,a)\in\mathcal{U}}$ converges locally uniformly to $\psi_{c_0,a_0}^{-1}$ on $\C\setminus\overline{\D}$ as $(c,a)\to(c_0,a_0)$. Hence, for $R>0$ big enough, the closed topological disk
$$(\overline{\psi_{c,a}^{-1}(\p^1\setminus\overline{\D(0,R)})},\infty)$$
converges to the closed topological disk
$$(\overline{\psi_{c_0,a_0}^{-1}(\p^1\setminus\overline{\D(0,R)})},\infty)$$
in the Caratheodory topology, as $(c,a)\to (c_0,a_0)$, and for all $(c,a)\in\mathcal{U}\cup\{(c_0,a_0)\}$,
$$\overline{\D(0,4\sqrt[d-1]{d})}\cap\psi_{c,a}^{-1}(\p^1\setminus\overline{\D(0,R)})=\emptyset~.$$
If we set
$$U_{c,a}:=\{z\in\C \, ; \ g_{c,a}(z)<\log R\}=\psi_{c,a}^{-1}(\C\setminus\overline{\D(0,R)})$$
and $V_{c,a}:=P_{c,a}(U_{c,a})=\{z\in\C \, ; \ g_{c,a}(z)<d\log R\}$, the open sets $U_{c,a}$ and $V_{c,a}$ are topological disks and
$(U_{c,a},0)\to(U_{c_0,a_0},0)$ and $(V_{c,a},a^d)\to(V_{c_0,a_0},a_0^d)$ in the Caratheodory topology as $(c,a)\to(c_0,a_0)$.

~

As $\J_{c_n,a_n}$ converges in the Hausdorff topology to $\J_{c_0,a_0}$, one has
$$0\leq\limsup_{n\to\infty}\textup{Area}(\J_{c_n,a_n})\leq\textup{Area}(\J_{c_0,a_0})=0~,$$
which means that $\lim_{n\to\infty}\textup{Area}(\J_{c_n,a_n})=\textup{Area}(\J_{c_0,a_0})=0$.

 Let $K\Subset\C$ be a compact subset and $\delta,\epsilon>0$. As $\supp(\nu_{c_n,a_n})\subset\J_{c_n,a_n}$, there exists $n\geq1$ such that $\textup{Area}(\supp(\nu_{c_p,a_p}))\leq \epsilon/2$ for all $p\geq n$. Let now $p,q\geq n$. Then
$$\{ z\in K\ : \ |\nu_{c_p,a_p}(z)-\nu_{c_q,a_q}(z)|>\delta\}\subset\supp(\nu_{c_p,a_p})\cup\supp(\nu_{c_q,a_q})~,$$
hence 
$$\textup{Area}\left(\{ z\in K\ : \ |\nu_{c_p,a_p}(z)-\nu_{c_q,a_q}(z)|>\delta\}\right)\leq \epsilon~.$$
The sequence $(\nu_{c_n,a_n})$ is thus a Cauchy sequence in measure and we can find a sequence $\{(c_n,a_n)\}_{n\geq1}$ (extracted from the previous one) which converges to $(c_0,a_0)$ as $n$ tends to $\infty$ and such that $\nu_{c_n,a_n}$ converges in measure to some function $\nu_0\in L^\infty(\C,\C)$.

~

\par Finally, since $\left(U_{c_n,a_n},0\right)\to \left(U_{c_0,a_0},0\right)$ and $\left(V_{c_n,a_n},a_n^d\right)\to \left(V_{c_0,a_0},a_0^d\right)$ converge in the Carath\'eodory topology and since $P_{c_n,a_n}$ converges uniformly on compact subsets of $\C$ to $P_{c_0,a_0}$, we may apply McMullen Theorem~\ref{tm:lf} to the sequences $(\nu_{c_n,a_n})$ and
$$P_{c_n,a_n}:\left(U_{c_n,a_n},0\right)\to \left(V_{c_n,a_n},a_n^d\right)~.$$
The conclusion is that $\nu_0$ is a $P_{c_0,a_0}$-invariant line field on $\J_{c_0,a_0}$. In particular, $\J_{c_0,a_0}$ must have positive area, since it carries an invariant line field. This is a contradiction.
\end{proof}

\subsection{Proof of Theorem \ref{lmboundary}}
Recall that there exists a Borel set $\mathcal{B}\subset\partial_S \mathcal{C}_d$ of full measure for the bifurcation measure $\mu_\bif$ and such that for all $(c,a)\in\mathcal{B}$, (see Theorem \ref{tm:mupp})
\begin{itemize}
\item all cycles of $P_{c,a}$ are repelling,
\item the orbit of each critical points are dense in $\J_{c,a}$,
\item $\K_{c,a}=\J_{c,a}$ is locally connected and $\dim_H(\J_{c,a})<2$.
\end{itemize}
Let $\mathcal{U}\subset\C^{d-1}$ be a connected component of the interior of $\mathcal{C}_d$. It is a stable component and we treat separately two cases. Assume first that there exists $(c,a)\in \mathcal{U}$ such that $P_{c,a}$ has at least one non-repelling cycle. As the hypersurface $\Per_n(e^{i\theta})$ lies in the bifurcation locus for any $n\geq1$ and $\theta\in\R$, the polynomial $P_{c,a}$ has at least one attracting periodic point $z(c,a)$ and it can be followed holomorphically on $\mathcal{U}$. Hence it extends as a continuous map $z:\overline{\mathcal{U}}\to\C$ such that $z(c,a)$ is periodic for $P_{c,a}$ for all $(c,a)\in\overline{\mathcal{U}}$. In particular, for all $(c,a)\in\partial \mathcal{U}$, the polynomial $P_{c,a}$ admits a non-repelling periodic point. In particular, $\mathcal{B}\cap\partial \mathcal{U}=\emptyset$ by Theorem \ref{tm:mupp}, hence $\mu_\bif(\partial \mathcal{U})=0$.

~

\par Assume now that there exists $(c,a)\in \mathcal{U}$ such that all the periodic points of $P_{c,a}$ are repelling. Then, according to \cite[Theorem E]{MSS}, for any $(c,a)\in \mathcal{U}$, $P_{c,a}$ carries an invariant line field on its Julia set, $\J_{c,a}=\K_{c,a}$ and $\textup{Area}(\J_{c,a})>0$. Let $(c_0,a_0)\in\partial \mathcal{U}$, as $(c,a)\in \mathcal{U}\to (c_0,a_0)$, either all the cycles of $P_{c,a}$ remain repelling, or at least one becomes non-repelling. One thus has the following dichotomy:
\begin{enumerate}
\item all cycles of $P_{c_0,a_0}$ are repelling and thus $\J_{c_0,a_0}=\K_{c_0,a_0}$, or
\item there exists one cycle of $P_{c_0,a_0}$ which is non-repelling.
\end{enumerate}
In the first case, according to Theorem \ref{tm:dim}, one has $\textup{Area}(\J_{c_0,a_0})>0$, hence $(c_0,a_0)\not\in\mathcal{B}$. In the second case, according to Theorem \ref{tm:mupp}, one has $(c_0,a_0)\not\in\mathcal{B}$. We thus have proved that, in any case, $\partial \mathcal{U}\cap\mathcal{B}=\emptyset$ and $\mu_\bif(\partial \mathcal{U})=0$.

\section{Distribution of the hypersurfaces $\Per_n(w)$ for any $w$}

The present section is dedicated to the proof of Theorem \ref{tmconv}. In a first time, we recall  the definition of the hypersurface $\Per_n(w)$ and a result concerning limits in the sense of currents of these hypersurfaces due to Bassanelli and Berteloot \cite{BB2}.

\subsection{The hypersurfaces $\Per_n(w)$}

In what follows, we shall use the following (see \cite{silverman,milnor3}):
\begin{theorem}[Milnor, Silverman]\label{tmpern}
For any $n\geq1$, there exists a polynomial $p_n:\C^d\to\C$ such that for any $(c,a)\in\C^{d-1}$ and any $w\in\C$,
\begin{enumerate}
\item  if $w\neq1$, then $p_n(c,a,w)=0$ if and only if $P_{c,a}$ has a cycle of exact period $n$ and multiplier $w$,
\item otherwise, $p_n(c,a,1)=0$ if and only if there exists $q\geq1$ such that $P_{c,a}$ has a cycle of exact period $n/q$ and multiplier $\eta$ a primitive $q$-root of unity.
\end{enumerate}
\end{theorem}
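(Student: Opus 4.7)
The plan is to construct $p_n$ via the theory of dynatomic polynomials, following Milnor \cite{milnor3} and Silverman \cite{silverman}. First, I would introduce the dynatomic polynomial
$$\Phi_n(z; c, a) := \prod_{k \mid n} \bigl(P_{c,a}^k(z) - z\bigr)^{\mu(n/k)},$$
where $\mu$ is the M\"obius function. Although written as a formal rational expression, a multiplicity count for periodic points as roots of $P_{c,a}^k(z) - z$ shows that $\Phi_n$ is genuinely a polynomial in $z$ whose coefficients are polynomials in $(c, a) \in \C^{d-1}$; its roots are the periodic points of \emph{formal} period $n$ for $P_{c,a}$.

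Next, I would exploit the fact that the roots of $\Phi_n(\,\cdot\,; c, a)$ are partitioned into cycles under $P_{c,a}$, each of cardinality exactly $n$, and along each cycle the value $(P_{c,a}^n)'(z)$ is constant by the chain rule. Call this common value $\lambda_i(c,a)$ for the $i$-th cycle, and set
$$p_n(c, a, w) := \prod_i \bigl(w - \lambda_i(c, a)\bigr).$$
Elementary symmetric function theory, or equivalently the resultant identity
$$p_n(c,a,w)^n \;=\; \mathrm{Res}_z\bigl(\Phi_n(z;c,a),\; w - (P_{c,a}^n)'(z)\bigr),$$
ensures that $p_n$ extends to a polynomial on $\C^d$.

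Finally, to characterize the vanishing locus, I would use the classical formal-versus-exact period dichotomy: if $z$ has exact period $m \mid n$ and multiplier $\eta = (P_{c,a}^m)'(z)$, then $z$ has formal period $n$ if and only if either $m = n$, or $m < n$ with $n/m = q \geq 2$ and $\eta$ a primitive $q$-th root of unity. In the first case, the cycle contributes the factor $(w - \eta)$ to $p_n$; in the second, the chain rule gives $(P_{c,a}^n)'(z) = \eta^q = 1$, so the cycle contributes $(w - 1)$. Thus for $w \neq 1$ only the first case with $\eta = w$ can produce a zero of $p_n$, yielding an exact period $n$ cycle with multiplier $w$; for $w = 1$ both cases may occur, yielding the statement's alternative.

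The main technical obstacle in this program is the polynomiality of $\Phi_n$ (hence of $p_n$) through the multiplicity count, together with the verification that the $n$-th root of the resultant above is itself polynomial. These are classical and carried out in detail in \cite{milnor3, silverman}, which I would cite rather than reproduce.
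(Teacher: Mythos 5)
The paper does not prove this theorem: it is quoted verbatim from the literature with a citation to Milnor and Silverman, so there is no internal argument to compare against. Your sketch is a faithful outline of the standard construction via dynatomic polynomials found in those references: defining $\Phi_n$ by M\"obius inversion, checking its polynomiality by a local multiplicity count at periodic points, forming $p_n$ as the characteristic polynomial of the multiplier map on cycles (equivalently, as an $n$-th root of a resultant), and then using the formal-versus-exact period dichotomy to read off the vanishing locus, including the observation that the chain rule forces $(P_{c,a}^n)'(z)=\eta^q=1$ on any cycle of exact period $m=n/q$ with $\eta$ a primitive $q$-th root of unity. You are right that the two genuinely nontrivial points are the polynomiality of $\Phi_n$ and the extraction of a polynomial $n$-th root of the resultant, and deferring those to the cited sources is exactly what the paper itself does.
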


\par\noindent We now define a hypersurface by letting
$$\Per_n(w):=\{(c,a)\in\C^{d-1} \ | \ p_n(c,a,w)=0\}~,$$
for $n\geq1$ and $w\in\C$. We also shall set $L_{n,w}(c,a):=\log|p_n(c,a,w)|$ so that
$$[\Per_n(w)]=dd^c_{c,a}L_{n,w}~.$$
Bassanelli and Berteloot show the following we will rely on (see for instance \cite[Propositions 3.2 $\&$ 3.3]{BB2}).

\begin{proposition}[Bassanelli-Berteloot]
Pick $w\in\C$. The sequence $(d^{-n}L_{n,w})$ is reatively compact in $L^1_{\textup{loc}}(\C^{d-1})$. Let $\varphi$ be any limit of the sequence $(d^{-n}L_{n,w})$. Then $\varphi$ is a \emph{p.s.h} function which satisfies
\begin{itemize}
\item $\varphi\leq L$ on $\C^{d-1}$,
\item $\varphi=L$ on hyperbolic components. In particular, $\varphi\not\equiv-\infty$.
\end{itemize}\label{prop:BB}
\end{proposition}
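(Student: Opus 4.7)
The plan is to exploit the factorization of $p_n$ over cycles of periodic points. By Theorem \ref{tmpern}, up to a non-vanishing polynomial factor one can write, for $w\neq 1$,
$$p_n(c,a,w)=\prod_{\mathcal{O}}\bigl(w-m_{\mathcal{O}}(c,a)\bigr),$$
where $\mathcal{O}$ runs over the cycles of exact period $n$ of $P_{c,a}$ and $m_{\mathcal{O}}$ is the multiplier along $\mathcal{O}$; the case $w=1$ is handled similarly, with a correction coming from cycles of lower period. Thus
$$L_{n,w}(c,a)=\sum_{\mathcal{O}}\log|w-m_{\mathcal{O}}(c,a)|+O(1),$$
and the whole proof reduces to comparing this sum with $d^nL(c,a)$ via the identity $L(c,a)=\log d+\sum_i g_{c,a}(c_i)$ and classical equidistribution results for repelling periodic points.

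The first step is the local uniform upper bound $d^{-n}L_{n,w}\leq L+o(1)$. Using $\log|w-m_{\mathcal{O}}|\leq\log 2+\log^+|w|+\log^+|m_{\mathcal{O}}|$ and the fact that the number of cycles of exact period $n$ is $d^n/n+O(d^{n/2})$, it suffices to check that $\tfrac{1}{n}\log^+|m_{\mathcal{O}}(c,a)|\leq L(c,a)+\varepsilon_n$ locally uniformly with $\varepsilon_n\to 0$; this is a standard consequence of Brolin-Lyubich-Freire-Lopes-Ma\~n\'e equidistribution of preimages together with the Lyapunov exponent formula. Since each $d^{-n}L_{n,w}$ is psh, the resulting local upper bound yields relative compactness in $L^1_{\textup{loc}}$, and any $L^1_{\textup{loc}}$ limit $\varphi$ is either psh or $\equiv-\infty$ and satisfies $\varphi\leq L$.

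For the equality on hyperbolic components, pick any hyperbolic component $\mathcal{H}$ and $(c,a)\in\mathcal{H}$. By uniform hyperbolicity, $P_{c,a}$ is expanding on $\J_{c,a}$, and Brolin-Lyubich equidistribution of repelling cycles together with uniform expansion gives $\tfrac{1}{n}\log|m_{\mathcal{O}}(c,a)|\to L(c,a)$ for all but an asymptotically negligible proportion of cycles $\mathcal{O}$. For any fixed $w\in\C$ and for such cycles, $\log|w-m_{\mathcal{O}}|=\log|m_{\mathcal{O}}|+o(1)$, so they contribute $d^nL(c,a)(1+o(1))$ to the sum, while the remaining exceptional cycles contribute $o(d^n)$ by the previous upper bound. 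Hence $d^{-n}L_{n,w}(c,a)\to L(c,a)$ pointwise on $\mathcal{H}$, and dominated convergence upgrades this to $L^1_{\textup{loc}}(\mathcal{H})$-convergence, yielding $\varphi=L$ on $\mathcal{H}$. Since $L$ is continuous and finite, $\varphi\not\equiv-\infty$.

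The main obstacle is controlling cycles whose multiplier happens to lie close to $w$: the terms $\log|w-m_{\mathcal{O}}|$ can be arbitrarily negative and, summed and normalized, could in principle prevent the matching lower bound on $\mathcal{H}$. This is the technical heart of the argument, and is handled via a quantitative version of equidistribution of multipliers under uniform hyperbolicity, showing that only an exponentially small proportion of cycles can have multipliers in any fixed bounded subset of $\C$.
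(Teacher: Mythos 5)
First, note that the paper itself does not prove this Proposition: it is quoted directly from Bassanelli--Berteloot (Propositions~3.2 and 3.3 of \cite{BB2}), so there is no in-paper argument to compare against. Your proposal must therefore be judged on its own merits, and it contains a genuine gap.

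The problematic step is the claim that $\tfrac{1}{n}\log^+|m_{\mathcal{O}}(c,a)|\leq L(c,a)+\varepsilon_n$ holds locally uniformly for \emph{every} $n$-cycle $\mathcal{O}$, with $\varepsilon_n\to 0$. Equidistribution of periodic orbits controls the \emph{average} of Birkhoff sums $\tfrac{1}{n}\log|(P^n)'|$ over cycles, not the value on each individual cycle, and a pointwise bound of this type is simply false. For a concrete counterexample, take $d=2$ and $c=-6$: then $L=\log 2+g_c(0)\approx 1.54$, while the fixed point $z=3$ has multiplier $6$, so $\log^+|m|=\log 6\approx 1.79>L$; and because the restriction to the Julia set is conjugate to the full $2$-shift, for every large $n$ there are $n$-cycles spending $n-1$ of their $n$ steps near $z=3$, whose Lyapunov exponent $\tfrac1n\log|m_{\mathcal O}|$ tends to $\log 6$. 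More generally, the set of Lyapunov exponents of periodic cycles densely fills an interval $[\chi_-,\chi_+]$ with $\chi_+>L$ except in very degenerate cases. What is actually needed (and true, but nontrivial) is the locally uniform convergence of the \emph{averaged} quantities $d^{-n}\sum_{\mathcal O}\log^+|m_{\mathcal O}|\to L$; this is precisely the content of the periodic-orbit approximation of the Lyapunov function proved by Bassanelli--Berteloot, and it does not follow softly from Brolin--Lyubich equidistribution (one must handle the interchange of Birkhoff averaging with the limit, and the $\log^+$ truncation, with care). Without this ingredient, the upper bound $\varphi\leq L$ is not justified by your argument.

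Your discussion of hyperbolic components also misidentifies where the difficulty lies. On a hyperbolic component, uniform expansion gives $|m_{\mathcal O}|\geq C\alpha^n$ for some $\alpha>1$ depending only on the compact parameter set; hence for $n$ large, \emph{every} $n$-cycle satisfies $|w-m_{\mathcal O}|\geq\tfrac12|m_{\mathcal O}|$, and therefore $\log|w-m_{\mathcal O}|=\log|m_{\mathcal O}|+O(1)$ uniformly over all cycles, with no need to distinguish a ``typical'' class from an ``exceptional'' one. The ``technical heart'' you describe — showing only an exponentially small proportion of cycles has multiplier in a bounded set — is vacuous on a hyperbolic component (eventually that proportion is zero). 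Controlling cycles whose multiplier comes close to $w$ is indeed the central obstruction, but it arises \emph{outside} hyperbolic components, and overcoming it is exactly what the rest of the paper (via the comparison principle and the local structure of $T_\bif^k$) is about; it is not something one should attempt to prove as part of this Proposition.
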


\subsection{Proof of Theorem \ref{tmconv}}\label{sec:distrib}

We are now in position to prove Theorem \ref{tmconv}. Let us first remark that, since the natural projection $\pi:\C^{d-1}\to\mathcal{P}_d$ defined by $\pi(c,a)=\{P_{c,a}\}$ is $d(d-1)$-to-$1$, it is sufficient to prove that equidistribution holds in the family $(P_{c,a})_{(c,a)\in\C^{d-1}}$.

~

Pick any $w\in\C$, let $\varphi$ be any $L^1_{\textup{loc}}$-limit of the sequence $(d^{-n}L_{n,w})$ and let $(d^{-n_k}L_{n_k,w})_{k\geq0}$ converge to $\varphi$ in $L^1_{\textup{loc}}$. By Proposition \ref{prop:BB}, we have
\begin{enumerate}
\item $\varphi\leq L$ on $\C^{d-1}$,
\item $\varphi=L$ on hyperbolic components and in particular, $\varphi\not\equiv-\infty$,
\end{enumerate}
Our strategy is to make inductively use of the comparison principle which is established in Section \ref{sec:comp} to prove that $\varphi=L$. First, let us define an open set $U$ by setting
$$U:=\bigcup_{k=0}^{d-2}U_k, \ \text{ with } \ U_k:=\bigcup_{I}\bigcup_{\tau\in\mathfrak{S}_{d-1-k}}U_{I,\tau}~,$$
where $I=(i_1,\ldots,i_k)$ ranges over $k$-tuples with $0\leq i_1<\cdots<i_k\leq d-2$ and where $U_{I,\tau}$ are the open sets defined in Section \ref{sec:structure}.

\begin{claim}
$U$ is an open and dense subset of $\C^{d-1}\setminus\mathcal{C}_d$.
\end{claim}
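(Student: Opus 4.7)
Openness is immediate from the definitions: each $U_{I,\tau}$ is cut out of $\C^{d-1}$ by finitely many strict inequalities between the continuous functions $(c,a)\mapsto g_{c,a}(c_i)$ and the continuous maximum $G_I$, so each $U_{I,\tau}$, and therefore $U$, is open. Moreover, each $U_{I,\tau}$ is contained in $\C^{d-1}\setminus\mathcal{C}_d$ since $G_I < g_{c,a}(c_{j_{\tau(1)}})$ forces $g_{c,a}(c_{j_{\tau(1)}})>0$.

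For density, fix $(c_0,a_0)\in\C^{d-1}\setminus\mathcal{C}_d$ and set $B:=\{i: g_{c_0,a_0}(c_i)=0\}$, $E:=\{0,\ldots,d-2\}\setminus B$, $k:=|B|\leq d-2$. Take $I:=B$, which is a $k$-tuple (with $0\leq k\leq d-2$ since $E\neq\emptyset$). By continuity of the functions $(c,a)\mapsto g_{c,a}(c_i)$, there is an open neighborhood $W$ of $(c_0,a_0)$ on which $g_{c,a}(c_j)>G_I(c,a)$ for every $j\in E$. It therefore suffices to arbitrarily approximate $(c_0,a_0)$ by parameters $(c,a)\in W$ for which the finitely many values $\{g_{c,a}(c_j)\}_{j\in E}$ are pairwise distinct: then the permutation $\tau\in\mathfrak{S}_{d-1-k}$ sorting them in strictly increasing order places $(c,a)$ in $U_{I,\tau}\subset U$.

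For each $j\in E$, the orbit of $c_j$ eventually enters the domain of the B\"ottcher coordinate $\psi_{c,a}$, so after shrinking $W$ we may write $g_{c,a}(c_j)=d^{-n}\log|\psi_{c,a}(P^n_{c,a}(c_j))|$ for $n$ sufficiently large, which is \emph{pluriharmonic} on $W$. Hence for $i\neq j$ in $E$, the difference $h_{ij}:=g_{c,a}(c_i)-g_{c,a}(c_j)$ is pluriharmonic on $W$, so its zero set is either all of the connected component of $W$ containing $(c_0,a_0)$, or a real-analytic hypersurface with empty interior; in the latter situation small perturbations inside $W$ break the equality. The former alternative must be excluded: an identical vanishing of $h_{ij}$ on an open subset of $\C^{d-1}$ would force the holomorphic ratio $\psi_{c,a}(c_i)/\psi_{c,a}(c_j)$ to have constant modulus, hence to be a unimodular constant, which is incompatible with the asymptotic behavior of the B\"ottcher coordinate (e.g.\ by testing on parameters with $|c_i|$ very large and $|c_j|$ bounded, where $g_{c,a}(c_i)\sim \log|c_i|\neq g_{c,a}(c_j)$). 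Consequently the subset of $W$ where some pair of Green values in $\{g_{c,a}(c_j)\}_{j\in E}$ coincides is contained in a finite union of real-analytic hypersurfaces with empty interior in $W$, and its complement is open and dense and contained in $\bigcup_\tau U_{I,\tau}\subset U$.

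The main obstacle in this plan is the rigidity statement asserting that no two of the pluriharmonic functions $g_{c,a}(c_i)$, $g_{c,a}(c_j)$ coincide on an open subset of $\C^{d-1}$; this is a feature of the particular parameterization $(c,a)\mapsto P_{c,a}$ introduced in Section~1.1, and once it is in hand the rest of the argument is a standard continuity-and-perturbation routine.
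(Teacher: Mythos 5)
The openness part of your argument is correct and matches the paper. Your reduction of the density claim to a rigidity statement---that for $i\neq j$ the pluriharmonic difference $g_{c,a}(c_i)-g_{c,a}(c_j)$ cannot vanish identically on an open subset of $\{g_{c,a}(c_i)>0\}\cap\{g_{c,a}(c_j)>0\}$---is also the same reduction the paper makes (phrased slightly differently, via the cover by the half-open sets $V_{k,I,\tau}$ rather than by fixing a base point and perturbing). However, your justification of that rigidity has a genuine gap, and you are right to flag it as ``the main obstacle.''

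Pluriharmonicity only lets you propagate the vanishing of $h_{ij}:=g_{c,a}(c_i)-g_{c,a}(c_j)$ to the \emph{connected component} $V$ of $\{g_{c,a}(c_i)>0\}\cap\{g_{c,a}(c_j)>0\}$ containing the open set where it vanishes. Your ``e.g.''\ argument then tests parameters with $|c_i|$ very large and $|c_j|$ bounded, but you give no reason why such parameters should lie in this same component $V$, so the local constancy of the (suitably iterated and powered) B\"ottcher ratio simply does not reach them, and no asymptotic contradiction is obtained. The paper closes exactly this gap in two substantive steps. First, since $h_{ij}\equiv 0$ on $\overline{V}$ and the Green values are nonnegative, on $\partial V$ both $g_{c,a}(c_i)$ and $g_{c,a}(c_j)$ must vanish, so $V$ is in fact a connected component of \emph{each} of $\{g_{c,a}(c_i)>0\}$ and $\{g_{c,a}(c_j)>0\}$; Lemma~\ref{lm:connected} then forces $V$ to equal both escape loci in their entirety, whence $g_{c,a}(c_i)\equiv g_{c,a}(c_j)$ on all of $\C^{d-1}$, i.e.\ $\mathcal{B}_i=\mathcal{B}_j$. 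Second, this global identity is ruled out by Theorem~\ref{tmBH}: the cluster set of $\mathcal{B}_i$ on $H_\infty$ is the hypersurface $H_i$, whereas the cluster set of $\mathcal{B}_i\cap\mathcal{B}_j$ is the strictly lower-dimensional $H_i\cap H_j$, and these cannot coincide. Both the connectedness of the escape loci (Lemma~\ref{lm:connected}) and the dimension count at infinity (Theorem~\ref{tmBH}) are the parametrization-specific inputs your heuristic silently relies on but does not supply.
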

 
 We may prove that $L=\varphi$ on $U$. As $L$ and $\varphi$ are psh and as $L$ is continuous, this yields $L=\varphi$ on $\C^{d-1}\setminus\mathcal{C}_d$. Indeed, if $(c,a)\in\C^{d-1}\setminus\mathcal{C}_d$, there exists $U\ni(c_n,a_n)\to(c,a)$ and
$$\varphi(c,a)\leq L(c,a)=\limsup_{n\to\infty}L(c_n,a_n)=\limsup_{n\to\infty}\varphi(c_n,a_n)\leq\varphi(c,a)~.$$

\smallskip

Pick now $(c,a)\in U$ and let $0\leq k\leq d-2$ be the number of critical points of $P_{c,a}$ with bounded orbit. If $k=0$, then $(c,a)\in\mathcal{E}:=\C^{d-1}\setminus\bigcup_j\mathcal{B}_j$. Since $\mathcal{E}$ is in an hyperbolic component, hence $\varphi(c,a)=L(c,a)$. In particular, $\varphi=L$ on $U\cap \mathcal{E}$.

Assume now that $1\leq k\leq d-2$ and that $\varphi=L$ on
$$V_k:=U\setminus \bigcup_{0\leq i_1<\cdots<i_{k-1}\leq d-2}\bigcap_{j=1}^{k-1}\mathcal{B}_{i_j}~,$$
i.e. on the locus on $U$ where at least $d-k$ critical points escape. Since $k$ critical points of $P_{c,a}$ don't escape, $(c,a)\in\{G_I=0\}\cap U_{I,\tau}$ for some $k$-tuple $I$ and some $\tau\in\mathfrak{S}_{d-1-k}$. Let us remark that $\{G_I>0\}\cap U_{I,\tau}$ is contained in the aforementioned open set $V_k$, so that $\varphi=L$ on $\{G_I>0\}\cap U_{I,\tau}$. According to Theorem \ref{cor:TI}, there exists a $k$-dimensional manifold $\mathcal{X}$, an analytic set $\mathcal{X}_0$ and a finite holomorphic mapping $\pi:\mathcal{X}\to\mathcal{X}_0$ such that
\begin{itemize}
\item $\mathcal{X}$ has dimension $k$,
\item $\{G_I\circ\pi=0\}\Subset\mathcal{X}$, in particular $\varphi\circ\pi=L\circ\pi$ on $\mathcal{X}\setminus\{G_I\circ\pi=0\}$,
\item $(dd^cL\circ\pi)^k$ is a finite measure on $\mathcal{X}$ supported by $\partial\{G_I\circ\pi=0\}$,
\item for any connected component $\mathcal{U}$ of the interior of $\{G_I\circ\pi=0\}$,
\begin{center}
$(dd^cL\circ\pi)^k(\partial\mathcal{U})=0$.
\end{center}
\end{itemize}
To apply the comparison Theorem \ref{tmcompXav}, it remains to justify the existence of a smooth form on $\mathcal{X}$ which is K\"ahler outside an analytic subset of $\mathcal{X}$. Let $\omega:=dd^c\|(c,a)\|^2$ be the standard K\"ahler form on $\C^{d-1}$. Then the function $\lambda\mapsto\|\pi(\lambda)\|^2$ is psh and smooth on $\mathcal{X}$. Moreover, the form
$$\omega_\mathcal{X}:=dd^c\|\pi(\lambda)\|^2=\pi^*\left(\omega|_{\mathcal{X}_0}\right)$$
is K\"ahler on $\mathcal{X}\setminus\mathcal{Z}$, where $\mathcal{Z}$ is the strict analytic set of parameters $\lambda\in\mathcal{X}$ such that $D_\lambda\pi$ doesn't have maximal rank. By Theorem \ref{tmcompXav}, one has $\varphi\circ\pi=L\circ\pi$ on $\mathcal{X}$. In particular, $\varphi(c,a)=L(c,a)$. We thus have shown that $\varphi=L$ on the open set
$$V_{k+1}=U\setminus \bigcup_{0\leq i_1<\cdots<i_{k}\leq d-2}\bigcap_{j=1}^{k}\mathcal{B}_{i_j}~.$$
By a finite induction on $k$, we have $\varphi=L$ on $U$, hence on $\C^{d-1}\setminus\mathcal{C}_d$, as explained above.

~

The final step of the proof goes essentially the same way. According to Theorem \ref{lmboundary},
\begin{itemize}
\item $L$ is continuous and psh on $\C^{d-1}$ and the bifurcation measure
$$(dd^cL)^{d-1}=\mu_\bif$$
is supported on $\partial_S\mathcal{C}_d\subset\partial\mathcal{C}_d$,
\item for any connected component $\mathcal{U}$ of $\mathring{\mathcal{C}}_d$, $(dd^cL)^{d-1}(\partial\mathcal{U})=0$,
\item $\varphi\leq L$ and $\varphi=L$ on $\C^{d-1}\setminus\mathcal{C}_d$.
\end{itemize}
By Theorem \ref{tmcompXav}, this yields $\varphi=L$. Since this works for any $L^1_\textup{loc}$ limit $\varphi$ of the sequence $(d^{-n}L_{n,w})$, this means that $(d^{-n}L_{n,w})$ converges in $L^1_\textup{loc}$ to $L$, which ends the proof.

~

It now only remains to prove the Claim.

\begin{proof}[Proof of the Claim]
The openess is obvious by continuity of the maps $(c,a)\mapsto g_{c,a}(c_j)$, $0\leq j\leq d-2$. For $0\leq k\leq d-2$, $I=(i_1,\ldots,i_k)$ with $0\leq i_1<\cdots <i_k\leq d-2$ and $\tau\in\mathfrak{S}_{d-k-1}$, we let $V_{k,I,\tau}\subset\C^{d-1}$ be the set
\begin{equation*}
V_{k,\tau}:=\{G_I<g_{c,a}(c_{j_{\tau(1)}})\leq \cdots\leq g_{c,a}(c_{j_{\tau(d-k-1)}})\}~,
\end{equation*}
where $\{j_1,\ldots,j_{d-k-1}\}=I^c$, so that $\bigcup_{k,I,\tau}V_{k,I,\tau}=\C^2\setminus\mathcal{C}_d$ and $U_{I,\tau}\subset V_{k,I,\tau}$. It is sufficient to prove that $U_{I,\tau}$ is dense in $V_{k,I,\tau}$ for any $k$ and any $\tau$ to conclude.

Let now $0\leq k\leq d-2$ and $\tau\in\mathfrak{S}_{d-1}$ be fixed. Assume by contradiction that $V_{k,I,\tau}\setminus U_{I,\tau}$ contains an open set $\Omega$ of $\C^{d-1}\setminus\mathcal{C}_d$. Then, there exists $1\leq l\leq d-k-2$ so that the map
$$\phi_l:(c,a)\mapsto g_{c,a}(c_{j_{\tau(l+1)}})-g_{c,a}(c_{j_{\tau(l)}})$$
is constant equal to $0$ on $\Omega$. On the other hand, as $\Omega\subset W:=\{g_{c,a}(c_{j_{\tau(l)}})>0\}\cap\{g_{c,a}(c_{j_{\tau(l+1)}})>0\}$, the functions $g_{c,a}(c_{j_{\tau(l)}})$ and $g_{c,a}(c_{j_{\tau(l+1)}})$ are pluriharmonic on $W$, the function $\phi_l$ is pluriharmonic on the connected component $V$ of $W$ constaining $\Omega$ and vanishes on $\Omega$. As $V$ is connected and $\Omega\subset V$ is a non-empty open set, $\phi_l\equiv0$ on $V$, hence on $\overline{V}$, by continuity of $\phi_l$. This means that the open set $V$ is a connected component of $\{g_{c,a}(c_{j_{\tau(l)}})>0\}$.

To conclude the proof of the Claim, we just have to remark that, due to Lemma~\ref{lm:connected}, we have shown that $g_{c,a}(c_{j_{\tau(l)}})\equiv g_{c,a}(c_{j_{\tau(l+1)}})$ on $\C^{d-1}$, which is impossible, by Theorem \ref{tmBH}.
\end{proof}

\medskip

\noindent\textbf{Acknowledgement} We would like to thank Gabriel Vigny for many interesting and helpful discussions. We also would like to thank Vincent Guedj for interesting discussions concerning the comparison principle. The author also thanks the referee whose comments greatly helped to improve the presentation of the paper.

\bibliographystyle{short}
\bibliography{biblio}
\end{document}